\def\@tocline#1#2#3#4#5#6#7{\relax
  \ifnum #1>\c@tocdepth 
  \else
    \par \addpenalty\@secpenalty\addvspace{#2}%
    \begingroup \hyphenpenalty\@M
    \@ifempty{#4}{%
      \@tempdima\csname r@tocindent\number#1\endcsname\relax
    }{%
      \@tempdima#4\relax
    }%
    \parindent\z@ \leftskip#3\relax
    \advance\leftskip\@tempdima\relax
    \rightskip\@pnumwidth plus4em \parfillskip-\@pnumwidth
    #5\leavevmode\hskip-\@tempdima
      \ifcase #1
       \or\or \hskip 2em \or \hskip 2em \else \hskip 3em \fi%
      #6\nobreak\relax
    \dotfill\hbox to\@pnumwidth{\@tocpagenum{#7}}\par
    \nobreak
    \endgroup
  \fi}
\newtheorem{theorem}{Theorem}[section]
\newtheorem{corollary}[theorem]{Corollary}
\newtheorem{lemma}[theorem]{Lemma}
\newtheorem{proposition}[theorem]{Proposition}
\newtheorem{conjecture}[theorem]{Conjecture}
\theoremstyle{definition}
\newtheorem{definition}[theorem]{Definition}
\newtheorem{example}[theorem]{Example}
\newtheorem{remark}[theorem]{Remark}
\crefname{section}{Sect.}{section}
\numberwithin{equation}{section}
\begin{document}
\begin{sloppypar}
\title[Irrationalyy elliptic closed Reeb orbits]{Two irrationally elliptic closed orbits of Reeb flows on the boundary of star-shaped domain in $\mathbb{R}^{2n}$}

\author{Xiaorui Li, Hui Liu, Wei Wang}

\begin{abstract}
There are two long-standing conjectures in Hamiltonian dynamics concerning  Reeb flows on the boundaries of star-shaped domains in $\mathbb{R}^{2n}$ ($n \geq 2$). One conjecture states that such a Reeb flow  possesses either $n$ or infinitely many prime closed orbits; the other states that all the closed Reeb orbits  are irrationally elliptic when the domain is convex and the flow possesses finitely many prime closed orbits. In this paper, we prove that for dynamically convex Reeb flow on the boundary of a star-shaped domain in $\mathbb{R}^{2n}$ ($n \geq 2$) with exactly $n$ prime closed orbits, at least two of them must be irrationally elliptic.
\end{abstract}

\keywords{irrationally elliptic closed Reeb orbit, local Floer homology, star-shaped domain, index iteration theory, Hamiltonian system  }

\makeatletter
\@namedef{subjclassname@2020}{\textnormal{2020}
    \it{Mathematics Subject Classification}}
\makeatother
\subjclass[2020]{37J12,57R58,37J25}

\maketitle
\tableofcontents

\section{Introduction}
\subsection{Set up and background}
In this paper, we focus on the stability problem for closed characteristics, interpreted as closed orbits of the Reeb flow on the boundary of a star-shaped domain in $\mathbb{R}^{2n}$. Specifically, let $\Sigma^{2n-1}$ bound a star-shaped domain in $\mathbb{R}^{2n}$. Throughout this paper, we assume $\Sigma$ is smooth. The restriction of the Liouville form 
\[
\lambda_{\mathrm{std}} = \frac{1}{2} \sum_{i=1}^{n} (y_i \mathrm{d}x_i - x_i \mathrm{d}y_i)
\] 
to $\Sigma$ yields a contact form $\alpha$, with $\xi = \ker \alpha$ defining the contact structure. The Reeb vector field $R$ of $\alpha$ on $\Sigma$ satisfies
\[
\alpha(R) = 1, \quad \mathrm{d}\alpha(R, \cdot) = 0.
\]
Denote its flow by $\phi_\alpha^t$. For a closed orbit $x$ of period $T$, we call $x$ \textit{prime} if $T$ is its minimal period. Two fundamental problems in Hamiltonian dynamics concern the multiplicity of prime closed Reeb orbits and their stability. We always identify the closed orbits $t\mapsto x(t)$ and $t\mapsto x(t+\theta)$ for all $\theta\in\mathbb{R}$.

The stability of a closed Reeb orbit $x$ is characterized by its \textit{Floquet multipliers}—the eigenvalues of the restricted linearized Poincaré return map 
\[
(\mathrm{d}\phi_\alpha^T)_{x(0)}|_{\xi}: \xi_{x(0)} \to \xi_{x(0)}.
\]
Let $\sigma(x)$ denote this eigenvalue set and $\mathbb{U} := \{ z \in \mathbb{C} : |z| = 1 \}$ the unit circle. We classify $x$ as:
\begin{itemize}

    \item \textit{hyperbolic} if $\sigma(x) \cap \mathbb{U} = \emptyset$;
    \item \textit{elliptic} if $\sigma(x) \subset \mathbb{U}$;
    \item \textit{irrationally elliptic} if $\mathrm{d}\phi_\alpha^T|_{\xi}$ decomposes into a direct sum of $n-1$ rotations
    \[
    \begin{pmatrix}
        \cos \theta_i & -\sin \theta_i \\
        \sin \theta_i & \cos \theta_i
    \end{pmatrix}
    \]
    on $2$-dimensional symplectic subspaces, with rotation angles $\theta_i$ being irrational multiples of $\pi$ for $1 \leq i \leq n-1$;
    \item \textit{non-degenerate} if $1\notin\sigma(x)$; in addition, the Reeb flow on $\Sigma$ is called non-degenerate if all closed orbits, prime or not, are non-degenerate.
\end{itemize}

Denote by $\mathcal{T}(\Sigma)$ the set of prime  closed Reeb orbits on $\Sigma$ (with respect to the contact form $\alpha=\lambda_{\rm std}|_{\Sigma}$). 
There is a long-standing conjecture concerning the 
multiplicity of closed Reeb orbits on compact star-shaped hypersurfaces in
$\mathbb{R}^{2n}$: 
\begin{conjecture}\label{conj:mul}
For each $\Sigma^{2n-1}$ bounding a star-shaped domain in $\mathbb{R}^{2n}$,  $^\#\mathcal{T}(\Sigma)\geq n$.
\end{conjecture}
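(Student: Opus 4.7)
The plan is to argue by contradiction: assume $^\#\mathcal{T}(\Sigma) = N$ is finite with $N < n$, and derive an impossibility from the positive $S^1$-equivariant symplectic homology $\mathrm{SH}^{S^1,+}_*(\widehat{\Sigma};\mathbb{Q})$ of the Liouville filling $\widehat{\Sigma}$. The guiding principle is that for a star-shaped domain a Viterbo-type transfer argument \cite{Viterbo90,Vit89} identifies $\mathrm{SH}^{S^1,+}_*(\widehat{\Sigma})$ with that of the unit ball, which is $\mathbb{Q}$ in each even degree $\geq 2n$ and vanishes otherwise; hence the global Floer complex prescribes a precise number of generators in each sufficiently long index window.

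First I would pass to a tractable setting: either perturb $\alpha$ to a nondegenerate contact form (keeping the short orbits under control via continuation) or work directly in the Morse-Bott framework of Bourgeois-Oancea \cite{BO09mb,BO13gysin}, and compute each Conley-Zehnder index $\mu_{\mathrm{CZ}}(x_i^m)$ through Long's precise iteration formulae \cite{Long00,Long02}. Second, I would invoke the common index jump theorem of Long-Zhu \cite{LZ02}, in the enhanced form of Duan-Liu-Long-Wang \cite{DLLW24}, to produce a large integer $T$ and iterates $m_1,\dots,m_N$ such that each $\mu_{\mathrm{CZ}}(x_i^{m_i})$ lies in the narrow window $[T-n+1,T+n-1]$. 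Third, using local equivariant Floer homology results \cite{GK16,HM15,GG10}, I would bound the contribution of every $x_i$ in each even degree of that window by at most one generator.

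The counting step then runs as follows: along the $n$ consecutive even degrees $T, T+2, \dots, T+2(n-1)$, the target homology demands $n$ linearly independent generators, while the $N < n$ orbits can supply at most $N$ via their CIJT iterates. This contradiction would force $N \geq n$, proving the conjecture.

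The decisive obstacle is that the common index jump theorem in its usual form relies on a positivity estimate $\widehat{\mu}(x_i) \geq n+1$ on the mean indices, which is automatic under dynamical convexity but can fail for general star-shaped $\Sigma$. Orbits with small or negative mean index may populate the CIJT window with unintended iterates, spoiling the bijection between ``orbit iterates contributing in the window'' and generators of $\mathrm{SH}^{S^1,+}$. Closing this gap is essentially the content of Conjecture \ref{conj:mul} itself and would demand either an \emph{enhanced} index iteration theorem valid beyond the dynamically convex regime, or a genuinely new technique—for instance a higher-dimensional analogue of the embedded contact homology argument of Cristofaro-Gardiner-Hutchings \cite{CGH16}, which currently settles only $n=2$. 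For this reason, a realistic scope for this paper is not the full conjecture but rather a partial result under dynamical convexity (as the authors evidently target), where one can instead refine the count to extract ellipticity information from the orbits whose iterates realise the CIJT window.
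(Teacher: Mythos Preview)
The statement you were asked to prove is \emph{Conjecture 1.1} in the paper, not a theorem: the paper does not prove it, and indeed presents it as a long-standing open problem. There is therefore no ``paper's own proof'' to compare against.

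Your proposal is not a proof but a candid analysis of why the natural approach fails, and on that front your assessment is accurate. The strategy you sketch---use the known computation of $\mathrm{SH}^{S^1,+}$ for star-shaped domains, apply the common index jump theorem to confine iterates of the prime orbits to a narrow index window, and count---is exactly the machinery that \emph{does} work under dynamical convexity (this is the content of \cite{CGG24}, which the paper cites for this result). You correctly identify the decisive obstacle: the CIJT requires $\hat\mu(x_i)>0$ for all prime orbits, and for a general star-shaped $\Sigma$ there is no mechanism preventing orbits with small or negative mean index from contaminating the window. Your closing remark, that the realistic target is a refinement under dynamical convexity extracting ellipticity information, is precisely what the paper goes on to do in Theorem~1.5.

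So: there is no gap in your reasoning, because you are not claiming to have a proof; you are explaining why one does not currently exist, and your explanation matches the state of the art as the paper describes it.
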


Since the pioneering works of  Rabinowitz in \cite{Rab78} of 1978 (for star-shaped hypersurfaces) and Weinstein in \cite{Wei78} of 1978 (for convex hypersurfaces) showing $^\#\mathcal{T}(\Sigma)\geq 1$, the existence of multiple closed Reeb orbits has been deeply studied. For any convex hypersurface in $\mathbb{R}^{2n}$ with $n\geq 2$, Ekeland-Lassoued, Ekeland-Hofer and Szulkin in \cite{EL87} of 1987, \cite{EH87} of 1987 and \cite{Szu88} of 1988 proved $^{\#}\mathcal{T}(\Sigma)\ge 2$; Long-Zhu in \cite{LZ02} of 2002 further established the breakthrough result that $^{\#}\mathcal{T}(\Sigma) \ge\lfloor n/2\rfloor+1$ and $^{\#}\mathcal{T}(\Sigma)\geq n$ under non-degeneracy condition; Wang-Hu-Long in \cite{WHL07} of 2007 proved $\,^{\#}\mathcal{T}(\Sigma)\ge 3$ for $n=3$;  Wang in \cite{Wang16a} of 2016 showed $\,^{\#}\mathcal{T}(\Sigma)\ge \lfloor(n+1)/2\rfloor+1$ and in \cite{Wang16b} of 2016 demonstrated $\,^{\#}\mathcal{T}(\Sigma)\ge 4$ for $n=4$. Recent progress by \c{C}ineli-Ginzburg-G\"{u}rel in \cite{CGG24} of 2024 confirmed the conjecture  for $\Sigma$ bounding star-shaped domains with dynamically convex Reeb flows.

For star-shaped domains, we summarize some other key results: 
In \cite{Gir84} of 1984 and \cite{BLMR85} of 1985, $\;^{\#}\mathcal{T}(\Sigma)\ge n$ was established under some pinching conditions. 
Viterbo in \cite{Vit89} of 1989 proved generic existence of infinitely many closed Reeb orbits. 
Hu-Long in \cite{HL02} of 2002 showed $\;^{\#}\mathcal{T}(\Sigma)\ge 2$ under non-degeneracy condition. 
Cristofaro-Gardiner-Hutchings in \cite{CGH16} of 2016 proved $\;^{\#}\mathcal{T}(\Sigma)\ge 2$ for $n=2$ without pinching or non-degeneracy conditions, with alternative proofs in \cite{GHHM13} of 2013, \cite{GG15} of 2015 and \cite{LL16} of 2016. 
Duan-Liu-Long-Wang in\cite{DLLW18} and \cite{DLLW24}  established $\,^{\#}\mathcal{T}(\Sigma)\ge n$ under index  and non-degeneracy conditions, yielding at least $n$ ($n-1$) non-hyperbolic orbits for even (odd) $n$. 
Gutt-Kang in \cite{GK16} of 2016 proved $\,^{\#}\mathcal{T}(\Sigma)\ge n$ requiring non-degeneracy and Conley-Zehnder indices not less than $ n-1$. 
Duan-Liu in \cite{DL17} of 2017 and Ginzburg-Gürel in \cite{GG20} of 2020 showed $\,^{\#}\mathcal{T}(\Sigma)\ge \lfloor(n+1)/2\rfloor+1$ for dynamically convex flows independently. 
Additionally, Ginzburg-Gürel-Macarini in \cite{GGM18} of 2018 studied prequantization bundles, while Abreu-Macarini in \cite{AM17} of 2017 investigated dynamically convex contact forms, yielding various multiplicity results.

Note that in \cite{HWZ98} of 1998, Hofer-Wysocki-Zehnder
proved that $\,^{\#}\mathcal{T}(\Sigma)=2$ or $\infty$ holds for every $\Sigma$
bounding a convex domain in $\mathbb{R}^4$. Subsequently in \cite{HWZ03} of 2003, they proved the same result holds when $\Sigma$
bounds a star-shaped domain in $\mathbb{R}^4$ with non-degeneracy and transversality assumptions,
and  conjectured that:

{\it For each $\Sigma$ bounding a star-shaped domain in $\mathbb{R}^{4}$, there holds $^{\#}\mathcal{T}(\Sigma)\in \{2\} \cup \{\infty\}$.
}

This conjecture was confirmed in \cite{CGHHL23b}, by proving a stronger theorem that the ``two or infinity'' result holds for  any closed contact 3-manifold whose contact structure possessing torsion first Chern class. For $n\geq3$, the analogous conjecture is still widely open:

\begin{conjecture}[cf. {\cite{WHL07, LL17, CGG24}}]
  For each $\Sigma^{2n-1}$ bounding a star-shaped domain in $\mathbb{R}^{2n}$, there holds \[^{\#}\mathcal{T}(\Sigma)\in\{n\} \cup \{\infty\}.\]
\end{conjecture}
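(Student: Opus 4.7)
The plan is to argue by contradiction: suppose there exists a smooth $\Sigma^{2n-1}$ bounding a star-shaped domain in $\mathbb{R}^{2n}$ with $n\geq 3$ and $n<\,^{\#}\mathcal{T}(\Sigma)<\infty$, and write $\mathcal{T}(\Sigma)=\{x_1,\dots,x_p\}$ with $n<p<\infty$. Each $x_i$ carries a positive mean Conley--Zehnder index $\widehat{\mu}(x_i)$, a finite-dimensional local Floer homology, and a linearized Poincar\'e return map whose Krein normal form governs all iterated indices via Long's Bott-type formula \cite{Long99,Long02}.

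First I would apply the common index jump theorem of Long--Zhu \cite{LZ02}, in the enhanced form of \cite{DLLW24}: it produces a large integer $N$ and an iteration vector $(m_1,\dots,m_p)$ such that the indices $\mu_{\mathrm{CZ}}(x_i^{m_i})$ all lie in a narrow window around $2N$, with parities prescribed by the normal forms of the restricted Poincar\'e maps. This packs the $p$ orbits into a single Maslov level where they can be compared by a common homological count. Next I would compute, at this level, the alternating rank contribution in equivariant symplectic homology of the filling \cite{BO13gysin,BO09mb} using the local Floer/contact homology calculations of \cite{GG10,HM15,Fe20}. Combined with the mean Euler characteristic / resonance identity for star-shaped domains of \cite{Vit89,GG15,WHL07}, this yields a Diophantine identity roughly of the form $\sum_{i=1}^{p}c_i/\widehat{\mu}(x_i)=n/2$, with weights $c_i$ read from local Floer data, which must be compatible with the index packing of the previous step.

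The main obstacle is the final step: ruling out \emph{all} Diophantine configurations compatible with $n<p<\infty$. In dimension four the analogous fact \cite{CGHHL23,CGHHL23b} is proved via embedded contact homology, which has no established higher-dimensional analogue. A natural refinement would sharpen the enhanced common index jump theorem so as to record not only the mean indices but the full Krein spectrum of each orbit, and combine it with vanishing lemmas for local Floer homology of hyperbolic iterates in the spirit of \cite{GG20,AM22,CGG24}. An alternative route would attempt to deform the star-shaped Reeb flow to the dynamically convex case handled in \cite{CGG24}, but since dynamical convexity is not preserved under generic $C^\infty$ perturbations, such a reduction would need to control the global dynamics and the full index data throughout the deformation.

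I expect a complete proof for $n\geq 3$ will require a genuinely new homological invariant, finer than the mean Euler characteristic, that is sensitive enough to distinguish configurations with exactly $n$ orbits from those with any intermediate finite number $n+1,n+2,\dots$; absent such an invariant, the present toolkit delivers the lower bound $\,^{\#}\mathcal{T}(\Sigma)\geq n$ (in the dynamically convex case, by \cite{CGG24}) and partial ellipticity information, but not the dichotomy asserted in the conjecture.
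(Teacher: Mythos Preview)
The statement you are addressing is labeled a \emph{conjecture} in the paper, not a theorem, and the paper offers no proof of it; it is cited as a long-standing open problem for $n\geq 3$. So there is no ``paper's own proof'' to compare your proposal against. Your write-up is itself candid on this point: you identify the key obstruction (ruling out all intermediate finite counts $n<p<\infty$) and conclude that the current toolkit does not close the gap. That assessment is accurate and consistent with the paper's treatment.

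A few remarks on the sketch itself. The machinery you invoke (common index jump, resonance identities, local Floer data) is the standard arsenal and is what the paper uses for its actual main theorem, but as you note, it only delivers lower bounds and stability information, not the dichotomy. One point to be careful about: the resonance identity and the common-index-jump packing already require some index positivity (e.g.\ dynamical convexity or at least positive mean index), so your contradiction argument as stated would not even get off the ground for a general star-shaped $\Sigma$ without first establishing such a condition---and that is itself open. Your suggestion that a new invariant beyond the mean Euler characteristic is needed is a reasonable expectation, but it is speculation rather than a proof strategy. In short: your proposal is not a proof and does not claim to be one; the paper agrees that the conjecture is open.
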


 It seems interesting to study the property of closed Reeb orbits for the case $^{\#}\mathcal{T}(\Sigma)=n$.  Note that we have the following example of Harmonic Oscillator (cf. \cite[Section 1.7]{Eke90}):
\begin{example}\label{exam:HO}
Let $r=(r_1,\ldots, r_n)$ with $r_i>0$ for $1\le i\le n$.
Define 
\begin{equation*}\mathcal{E}_n(r)=\left\{z=(x_1, \ldots,x_n,
y_1,\ldots,y_n)\in\mathbb{R}^{2n}\left |\frac{}{}\right.
\frac{1}{2}\sum_{i=1}^n\frac{x_i^2+y_i^2}{r_i}=1\right\}.\end{equation*}
 In this case, the corresponding  system is linear and all the  solutions  
can be computed explicitly. Suppose $\frac{r_i}{r_j}\notin\mathbb{Q}$ whenever $i\neq j$, then
$^{\#}\mathcal{T}(\mathcal{E}_n(r)) = n$ and all the closed 
Reeb orbits
on $\mathcal{E}_n(r)$ are irrationally elliptic. On the other hand, if $\frac{r_i}{r_j}\in\mathbb{Q}$ for some $i\neq j$, then   $^{\#}\mathcal{T}(\mathcal{E}_n(r)) = \infty$.
\end{example}

In \cite{Long00} of 2000,  Long proved  that for
$\Sigma$ bounding a convex domain in $\mathbb{R}^4$, $\,^{\#}\mathcal{T}(\Sigma)=2$ implies that both of the
closed Reeb orbits must be elliptic. In \cite{WHL07}, the authors
proved further that  both of the two closed Reeb orbits must be irrationally
elliptic. (This result is generalized to any closed contact 3-manifold in \cite{CGHHL23} of 2023.)

Based on these facts,  Wang-Hu-Long in \cite{WHL07} made the following conjecture:

\begin{conjecture}\label{conj:ir elip}
For each $\Sigma^{2n-1}$ bounding a convex domain in $\mathbb{R}^{2n\geq4}$, assume $^\#\mathcal{T}(\Sigma)<\infty$, then all the closed Reeb orbits on $\Sigma$ are irrationally elliptic.
\end{conjecture}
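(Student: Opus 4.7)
The plan is to argue by contradiction: assume $^\#\mathcal{T}(\Sigma)<\infty$ and that some prime closed Reeb orbit $x_0\in\mathcal{T}(\Sigma)$ fails to be irrationally elliptic, then derive an inconsistency from the convexity-driven Maslov-index constraints. The three core ingredients are Ekeland's convex index inequality, which on a convex hypersurface pins each iterated Maslov-type index of any prime orbit to lie $\geq n$ (cf.~\cite{Eke86,Eke90,LZ02}); the enhanced Common Index Jump Theorem of Duan-Long-Wang and Duan-Liu-Long-Wang \cite{DLW16,DLLW24}; and the resonance identity of Wang-Hu-Long \cite{WHL07}, which for a convex $\Sigma$ with $N:={}^\#\mathcal{T}(\Sigma)<\infty$ provides the precise quantitative relation
\begin{equation*}
\sum_{x\in\mathcal{T}(\Sigma)}\frac{\hat\chi(x)}{\hat{i}(x)}=\tfrac12,
\end{equation*}
where $\hat\chi(x)$ denotes the mean local Euler characteristic and $\hat{i}(x)$ the Long mean index.

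I would first treat the non-degenerate case, where Long-Zhu \cite{LZ02} guarantees $N\geq n$. Assuming some $x_0$ is not irrationally elliptic forces a dichotomy: either (a) $\sigma(x_0)\not\subset\mathbb{U}$, so the linearized Poincaré map along $\xi$ admits a real or complex hyperbolic block, or (b) $\sigma(x_0)\subset\mathbb{U}$ but at least one rotation angle $\theta_0$ is a rational multiple of $\pi$. The precise iteration formulas of Long \cite{Long99,Long00,Long02} control $i(x_0^m)$ along the tower: in case (b), $i(x_0^m)$ is strictly periodic modulo the denominator of $\theta_0/\pi$; in case (a), the splitting numbers of $x_0$ at $\pm 1$ vanish, tightly bounding the defect $i(x_0^m)+\nu(x_0^m)-m\hat{i}(x_0)$. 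Invoking the enhanced CIJT at large common iteration vectors $(m_1,\ldots,m_N)$ concentrates every index $i(x_k^{m_k})$ in a fixed window $[T-n,T+n]$. The rigid arithmetic pattern of $x_0$ would then be shown to be incompatible with this concentration in the presence of the Ekeland bound, eliminating both (a) and (b).

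Degenerate hypersurfaces would be attacked through a perturbation limit, using stability of the action spectrum \cite{CFHW96} and the Gromoll-Meyer stability of local Floer homology \cite{GM69a,GG10}. The concluding synthesis is an orbit-by-orbit computation of $\hat\chi(x)/\hat{i}(x)$ via local Floer homology \cite{GG10,Fe20,Mcl12,BO09mb} for each possible orbit type (hyperbolic, elliptic with rational angles, symplectically degenerate maxima, irrationally elliptic), followed by a verification that the resonance identity displayed above admits no solution unless every orbit falls into the irrationally elliptic class.

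The main obstacle is precisely this closing step. In dimension four (\cite{WHL07}, and more generally \cite{CGHHL23}) the Floquet data on $\mathbb{U}$ are essentially one-parameter, so the ratios $\hat\chi/\hat{i}$ take a rigidly discrete set of values and $\sum=\tfrac12$ leaves no slack for non-irrational ellipticity. For $n\geq 3$ the $(n-1)$-parameter family of rotation angles supplies too much flexibility: the identity alone will not isolate irrationally elliptic configurations, and a genuinely new input is needed — plausibly a refinement of the CIJT controlling the joint distribution of all $n-1$ splitting numbers simultaneously, or a persistence-of-invariant-sets argument along the lines of \cite{CGG24,CGGM23} to exclude the remaining iteration patterns. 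Bridging this gap, rather than any single technical step above, is where the substantial new ideas must enter.
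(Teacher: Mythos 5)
The statement you are trying to prove is Conjecture \ref{conj:ir elip}, which the paper explicitly presents as a long-standing \emph{open} conjecture of Wang--Hu--Long \cite{WHL07}; the paper does not prove it and does not claim to. What the paper actually establishes (Theorem \ref{thm:main}) is a much weaker partial result: under the additional hypotheses that the flow is dynamically convex and has \emph{exactly} $n$ prime closed orbits, at least \emph{two} of them are irrationally elliptic. So there is no ``paper's own proof'' to compare against, and any complete argument for the full conjecture would be a major new theorem, not a routine verification.

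Your proposal is, by your own admission in the final paragraph, not a proof: the closing synthesis --- deducing from the resonance identity $\sum_{x}\hat\chi(x)/\hat{i}(x)=\tfrac12$ together with the common index jump theorem that every orbit must be irrationally elliptic --- is exactly the step that fails for $n\geq 3$, and you correctly identify that ``a genuinely new input is needed.'' Concretely, the obstruction is that for $n\geq 3$ the index iteration data of an elliptic orbit with one rational rotation angle, or of an orbit with a hyperbolic block, can be arranged (via the remaining $n-2$ irrational angles and the splitting numbers $S^{\pm}$) to produce the same contributions to the resonance identity and the same index concentration in the CIJT window as a genuinely irrationally elliptic orbit; the identity and the window constraints are finitely many linear/arithmetic conditions against an $(n-1)$-parameter family of Floquet data, so they cannot isolate the irrationally elliptic configuration. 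This is precisely why the literature you cite (\cite{Wang09,Wang14,HO17,Wang22} and the present paper) only obtains \emph{two} irrationally elliptic orbits, and even then only under extra hypotheses such as $^{\#}\mathcal{T}(\Sigma)=n$ or nondegeneracy-type conditions. The degenerate-case reduction you sketch is also unsound as stated: a small perturbation of $\Sigma$ can destroy the hypothesis $^{\#}\mathcal{T}(\Sigma)<\infty$ or create new prime orbits, and neither stability of the action spectrum nor Gromoll--Meyer stability of local Floer homology controls the Floquet multipliers of the limiting orbit, which is the quantity the conjecture is about. If you want a statement you can actually prove along these lines, aim at the paper's Theorem \ref{thm:main}: there the key new ingredients are the degree bijection of \cite{CGG24} (Theorem \ref{thm:bijection}), the commutativity of mean indices across two index jump events (Proposition \ref{prop:commut}), and the concentration result Theorem \ref{thm:local max} for equivariant local symplectic homology, none of which appear in your outline.
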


\begin{remark}
     Note that in Example \ref{exam:HO}, if 
$r_1=r_2=\cdots=r_n=1$,  then every trajectory of the Reeb vector field is  closed  on
$\mathcal{E}_n(r)$, thus $^\#\mathcal{T}(\mathcal{E}_n(r))=\infty$. Moreover,   
$1$ is a Floquet multiplier  with  multiplicity
$2n-2$ for any closed Reeb orbit on $\mathcal{E}_n(r)$. 
Thus one can not hope to find irrationally elliptic closed Reeb orbit
on arbitrary $\Sigma$.
\end{remark}

In \cite{Wang09} of 2009, Wang proved that if $\Sigma$ bounds a convex domain in $\mathbb{R}^6$ with $^\#\mathcal{T}(\Sigma)=3$, then there exist at least two  elliptic closed Reeb orbits on $\Sigma$. This result was improved  in \cite{Wang14} of 2014, where it was shown that there exist at least two  irrationally elliptic closed Reeb orbits on $\Sigma$ under the same assumption. In \cite{HO17} of 2017, Hu-Ou established the existence of at least two elliptic closed Reeb orbits on $\Sigma$ under the assumption of Conjecture \ref{conj:ir elip}. In \cite{Wang22} of 2022, Wang demonstrated that under the condition ``$\rho_n(\Sigma)=n$'' (slightly weaker than non-degeneracy), if $\Sigma$ bounds a convex domain in $\mathbb{R}^{2n}$ with $^\#\mathcal{T}(\Sigma)=n$, then there are at least two irrationally elliptic closed Reeb orbits; this result also holds for star-shaped domains with dynamically convex Reeb flows. Our main result improves upon \cite{Wang22} by eliminating the technical condition ``$\rho_n(\Sigma)=n$'', as stated in Theorem \ref{thm:main}.


There are some related results considering the stability problem. A long-standing conjecture concerning the stability of closed Reeb orbits on compact convex hypersurfaces in $\mathbb{R}^{2n}$ (cf. \cite[p. 235]{Eke90}) states:

\begin{conjecture}
For each $\Sigma^{2n-1}$ bounding a strictly convex domain in $\mathbb{R}^{2n}$, there exists at least one elliptic closed Reeb orbit on $\Sigma$.
\end{conjecture}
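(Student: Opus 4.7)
The plan is to attempt this by contradiction using Long's Maslov-type index theory together with the resonance identity of Wang-Hu-Long for convex hypersurfaces. Assume that $\,^\#\mathcal{T}(\Sigma) < \infty$ and that every prime closed Reeb orbit $x \in \mathcal{T}(\Sigma)$ is hyperbolic, so that $\sigma(x) \cap \mathbb{U} = \emptyset$; the case $\,^\#\mathcal{T}(\Sigma) = \infty$ would have to be handled separately, presumably by a genericity reduction or a direct elliptic-from-infinity argument.

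First, I would exploit the fact that hyperbolicity is preserved under iteration, and that for a hyperbolic orbit Long's precise iteration formula \cite{Long00} gives $i(x^m) = m \cdot \hat{i}(x) + c_{\rm par}(m)$ with a bounded, parity-controlled correction. Consequently the Gromoll-Meyer local homology of each iterate $x^m$ sits in a single degree determined by the parity of $i(x)$, so each orbit contributes in a completely rigid way to any Morse-type identity built from local Floer homology.

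Second, I would invoke the resonance identity of \cite{WHL07} together with the common index jump theorem \cite{LZ02} (and its enhancements \cite{DLW16, DLLW24}) to select a family of iterates whose Conley-Zehnder indices all cluster in a symmetric window around $2N - n$. Under the all-hyperbolic assumption, the identity
\[ \sum_{x \in \mathcal{T}(\Sigma)} \frac{\hat{\chi}(x)}{\hat{i}(x)} = \tfrac{1}{2} \]
and the Morse-type inequalities against the $S^1$-equivariant symplectic homology of the bounded domain (whose computation for star-shaped domains is classical \cite{Vit89, BO13gysin}) reduce to a rigid Diophantine system on the rational mean indices $\hat{i}(x_j)$. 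Combining this with the multiplicity lower bound $\,^\#\mathcal{T}(\Sigma) \geq \lfloor n/2\rfloor + 1$ from \cite{LZ02, Wang16a} and the parity constraints forced by hyperbolicity, I would hope to produce a numerical contradiction yielding an orbit with at least one eigenvalue on $\mathbb{U}$. Promoting such an orbit to a fully elliptic one (all Floquet multipliers on $\mathbb{U}$) would then require separate control of the splitting numbers in each symplectic subspace of the linearized return map, using convexity to rule out mixed-type configurations.

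The hard part, and the reason this conjecture remains open for $n \geq 3$, is that the Diophantine system extracted from the resonance identity has far too many rational solutions in high dimension for the argument sketched above to close: parity considerations alone do not pin down the mean indices $\hat{i}(x_j)$. The known low-dimensional or symmetric results (\cite{HWZ98} for $n=2$; partial results \cite{Wang09, HO17, Wang22} under pinching or near-non-degeneracy, and \cite{Arn99, DDE92} under symmetry) exploit dimension-specific index cancellations with no clear analog in general. A full resolution would likely require combining the Floer-theoretic refinements of \cite{CGG24, CGGM23} or the local contact homology framework of \cite{HM15, Fe20, Mcl12} with genuinely new structural input on the splitting numbers of linearized Poincar\'e maps, which is precisely the delicate point that also separates Conjecture \ref{conj:ir elip} from what current index-theoretic technology can deliver.
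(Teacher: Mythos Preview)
This statement is an \emph{open conjecture}, not a theorem, and the paper does not prove it. The paper merely states the conjecture and lists the special cases under which it has been verified (Ekeland's pinching, Dell'Antonio--D'Onofrio--Ekeland's central symmetry, Long--Zhu's finiteness hypothesis, and the symmetric settings of Arnaud and Liu--Wang--Zhang / Abreu--Macarini). There is therefore no ``paper's own proof'' to compare against.

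Your proposal is, appropriately, not a proof either, and you say so yourself in the final paragraph. But there is a structural confusion worth flagging. You split into the cases $\,^\#\mathcal{T}(\Sigma)<\infty$ and $\,^\#\mathcal{T}(\Sigma)=\infty$, devote almost the entire discussion to the finite case, and then declare that ``this conjecture remains open for $n\ge 3$'' because the Diophantine system from the resonance identity has too many solutions. This misidentifies where the difficulty lies: the finite case is precisely the one that \emph{is} settled, by Long--Zhu \cite{LZ02}, and the paper explicitly lists it among the verified cases. Their argument does produce a genuinely elliptic orbit (not merely non-hyperbolic), so your worry about ``promoting such an orbit to a fully elliptic one'' is already handled in that setting. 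The case that keeps the conjecture open is exactly the one you defer --- $\,^\#\mathcal{T}(\Sigma)=\infty$ --- where the resonance identity and common index jump machinery are unavailable and no ``genericity reduction'' of the kind you allude to is known.

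In short: your sketch is aimed at the already-solved half of the problem, and the genuine obstruction is not the high-dimensional Diophantine underdetermination you describe but rather the absence of any index-theoretic handle on the infinite-orbit regime.
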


This conjecture has been verified under the following assumptions:
\begin{itemize}
    \item[-] Ekeland (1986) \cite{Eke86}: $\sqrt{2}$-pinched convex domains;
    \item[-] Dell'Antonio-D'Onofrio-Ekeland (1992) \cite{DDE92}: centrally symmetric domains;
    \item[-] Long-Zhu (2002) \cite{LZ02}: finite set of closed orbits ($^\#\mathcal{T}(\Sigma)<+\infty$);
    \item[-] Arnaud (1999) \cite{Arn99}, Liu-Wang-Zhang (2020) \cite{LWZ20}: domains invariant under specific rotation direct sums;
    \item[-] Abreu-Macarini (2022) \cite{AM22}: relaxing strict convexity to convexity in \cite{LWZ20}).
\end{itemize}
To the best of our knowledge, no lower bound for multiple  elliptic closed Reeb orbits is known under only the strictly convexity hypothesis.

\subsection{Main results}
To state our results, we first briefly review some key terminology, which will be formally defined in Sections \ref{sec:pre} and \ref{sec:index}.

Let $(\Sigma^{2n-1},\alpha)$ be a contact manifold bounding a star-shaped domain in $\mathbb{R}^{2n}$, as introduced in the previous section. For a closed Reeb orbit $x$ on $\Sigma$ with period $T$, viewed as an immersion $S^1 = \mathbb{R}/T\mathbb{Z} \to \Sigma$, we define its action by
\begin{equation*}
    \mathcal{A}(x) := \int_{S^1} x^*\alpha.
\end{equation*}
By the definition of Reeb vector field, $\mathcal{A}(x)$ equals the period of $x$. For each $k\in\mathbb{N}$, the $k$-th iteration of $x$ is the closed Reeb orbit $y$ satisfying $\mathcal{A}(y) = k\mathcal{A}(x)$ and $y|_{[0,T]} = x$, denoted by $y = x^k$. The integer $k$ or iteration $x^k$ is  called \textit{admissible} if the algebraic multiplicity of eigenvalue 1 in $\sigma(x)$ and $\sigma(x^k)$ coincides. For instance, $x^k$ is admissible when the set $\{\lambda \in \sigma(x) : |\lambda|^k = 1\}$ is either $\{1\}$ or empty.

For each closed Reeb orbit $x$ on $\Sigma$, we associate two integer-valued Maslov-type indices $\mu_{\pm}(x)$, obtained via upper and lower semi-continuous extensions of the Conley-Zehnder index (see Section \ref{subsec:index}),  and a $\mathbb{R}$-valued index $\hat\mu(x)$, known as the {\it mean index}  (see Section \ref{subsec:index}). The Reeb flow on $\Sigma^{2n-1}$ is \textit{dynamically convex} if $\mu_-(x) \geq n+1$ for all closed Reeb orbits $x$. This notion was originally introduced in \cite{HWZ98} and subsequently generalized 
 by several authors, cf. Abreu-Macarini in \cite{AM22},  Duan-Liu 
 in \cite{DL17},  Gutt-Kang in \cite{GK16}, etc.  All these definitions are equivalent when  restricted to  the case where $\Sigma$ is the boundary of a star-shaped domain in $\mathbb{R}^{2n}$.

The following theorem is the main result of this paper:
\begin{theorem}\label{thm:main}
Let $\Sigma^{2n-1}\subset \mathbb{R}^{2n\geq4}$ be the boundary  of a star-shaped domain. Assume that the Reeb flow on $\Sigma$ is dynamically convex and has exactly $n$ prime closed orbits, then  at least two of them are irrationally elliptic.  
\end{theorem}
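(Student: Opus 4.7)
The plan is to argue by contradiction. Suppose $\mathcal{T}(\Sigma) = \{x_1, \ldots, x_n\}$ and at most one of these orbits, say $x_1$, is irrationally elliptic. Dynamical convexity yields $\hat\mu(x_j) \geq n+1 > 0$ for every $j$, which is precisely the positivity hypothesis needed to run Long's iteration machinery. I would apply the generalized common index jump theorem of Duan--Liu--Long--Wang \cite{DLLW24} to produce a large integer $N$ and admissible iterations $m_1, \ldots, m_n$ such that, for every $j$,
\begin{equation*}
\mu_-(x_j^{m_j}) \geq 2N - n + 1, \qquad \mu_+(x_j^{m_j}) \leq 2N + n - 1,
\end{equation*}
so that the Conley--Zehnder indices of the selected iterates all fall inside the window $[2N - n + 1, 2N + n - 1]$ of length $2n - 2$ centered at $2N$.

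Next I would translate the count into $S^1$-equivariant local Floer homology. Since $\Sigma$ bounds a star-shaped domain in $\mathbb{R}^{2n}$, the positive $S^1$-equivariant symplectic homology of the standard filling is one-dimensional in every degree of the form $n+1, n+3, n+5, \ldots$, so a standard action--index filtration argument (cf.\ \cite{BO13gysin, GG20}) forces the total dimension of the local equivariant Floer homologies of $x_1^{m_1}, \ldots, x_n^{m_n}$, together with those of nearby iterates $x_j^{m_j \pm k}$ for small $k$, to match a prescribed lower bound in the index window above. Now invoke the local computations of Ginzburg--G\"urel \cite{GG10} and of Fender and McLean \cite{Fe20, Mcl12}: if $x_j$ is hyperbolic, or elliptic with at least one rotation angle in $\pi\mathbb{Q}$, then the total contribution of the family $\{x_j^{m_j+k}\}_{|k|\leq K}$ to the Floer window is uniformly bounded, and adjacent contributions cancel in a parity-sensitive fashion dictated by Long's splitting numbers \cite{Long00, Long02}. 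Summing these bounded contributions from the $n-1$ non-irrationally-elliptic orbits $x_2, \ldots, x_n$, together with whatever $x_1$ contributes, one should fall short of the global count dictated by the equivariant symplectic homology of the ball, producing the contradiction.

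The main obstacle is precisely the step Wang \cite{Wang22} sidestepped by imposing the auxiliary hypothesis $\rho_n(\Sigma) = n$: namely the degenerate configuration in which some $\mu_-(x_j^{m_j})$ hits the extremal value $2N - n + 1$ exactly, so that a non-irrationally-elliptic orbit could a priori contribute an ``extra'' Floer class via a generalized Maslov index jump. To remove this hypothesis I would combine Long's precise iteration formulae for the splitting numbers of the symplectic path associated with $x_j$ with a careful analysis of the injection of local equivariant Floer homology into the filtered symplectic homology, in the spirit of \c{C}ineli--Ginzburg--G\"urel \cite{CGG24}. The target of this analysis is to show that if such an extremal contribution appears at iteration $m_j$, then the neighboring admissible iterate $m_j \pm 1$ produces a class of opposite parity which cancels it, so that the effective count in the window $[2N - n + 1, 2N + n - 1]$ is unchanged. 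Carrying out this cancellation uniformly across the possible $*$-homotopy types of the Floquet multipliers of a non-irrationally-elliptic closed orbit is where I expect the bulk of the technical work of the proof to lie.
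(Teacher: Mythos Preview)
Your proposal has a genuine gap at exactly the point you flag as the main obstacle. The cancellation mechanism you sketch---that an extremal contribution at $m_j$ is offset by a class of opposite parity at $m_j\pm 1$---is not how the degenerate case is actually handled, and there is no reason to expect such a parity cancellation to hold in general. The contributions of $x_j^{m_j}$ and $x_j^{m_j\pm 1}$ to equivariant local symplectic homology all have the \emph{same} parity (namely $n+1$ mod $2$), since by \c{C}ineli--Ginzburg--G\"urel the degree map is a bijection onto $n-1+2\mathbb{N}$; so no alternating-sign cancellation is available. More broadly, your contradiction setup (``at most one orbit is irrationally elliptic, so the total count falls short'') never gets off the ground, because nothing in the argument distinguishes irrationally elliptic orbits from other orbits at the level of how many Floer classes they contribute to a single index window: each contributes exactly one.

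The paper's route is structurally different. It does not argue by contradiction or by counting. Instead it uses \emph{two} common index jump events and the commutative property (the ordering of mean indices inside the window can be completely reversed between the two events) to single out two specific prime orbits, say $y_1$ and $y_n$: $y_1$ sits at the bottom of one window and at the top of the other. In the event where $\deg(y_1^{k'_1})=d'+n-1$ is maximal, one gets $\mu_+(y_1^{k'_1})=d'+n-1$ for free from the support bound. The decisive new input---entirely absent from your outline---is a local result (Theorem~\ref{thm:local max} in the paper) showing that if $\mathrm{CH}_{\mu_+(x)}(x;\mathbb{Q})\neq 0$ and $\mu_+(x)=d+n-1$ with $|d-\hat\mu(x)|<\tfrac12$, then the equivariant local symplectic homology of \emph{every admissible iterate} of the underlying orbit is concentrated in degree $\mu_+$. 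This propagates the constraint from the second event to the first, forcing $\mu_+(y_1^{k_1})=\deg(y_1^{k_1})=d-n+1$; combined with $\mu_-\geq d-n+1$ this yields $\nu(y_1^{k_1})=0$, and then the precise iteration formula pins down the normal form as a direct sum of irrational rotations. The proof of Theorem~\ref{thm:local max} goes through generating functions and a reduction to the totally degenerate case of local Floer homology (building on Ginzburg's work), which is the actual technical core replacing Wang's hypothesis $\rho_n(\Sigma)=n$---not a cancellation argument.
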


The proof is given in Section \ref{sec:two irra} and the main ingredients  include:
\begin{itemize}
    \item[-] Iteration theory of Maslov-type indices  (see Section \ref{subsec:ind iterate});
    \item[-] Commutative property of closed orbits in the common index jump intervals originally established in \cite{Wang16b} for $n=4$ and generalized to arbitrary $n$ in \cite{Wang22} (see Proposition \ref{prop:commut});
    \item[-] Key descriptions of local Floer-theoretic invariants recently given in \cite{CGG24} (see Section \ref{subsec:ind interval});
    \item[-] The following crucial observation (Theorem \ref{thm:local max}) proved in Section \ref{sec:local max}.
\end{itemize}

Fix a ground field $\mathbb{F}$. For each isolated closed Reeb orbit $x$, we define its \textit{equivariant local symplectic homology} $\mathrm{CH}_*(x;\mathbb{F})$, which serves as the Morse-theoretic counterpart of critical modules introduced in \cite{GM69a,GM69b}. The following theorem demonstrates that for certain closed Reeb orbit $x$, the support of the equivariant local symplectic homology concentrates at the Maslov-type index $\mu_+$.

\begin{theorem}\label{thm:local max}
Let $\Sigma^{2n-1} \subset \mathbb{R}^{2n\geq4}$ be the boundary of a star-shaped domain. Suppose $x$ is an isolated closed Reeb orbit on $\Sigma$ satisfying:
\begin{itemize}
    \item $\mathrm{CH}_{\mu_+(x)}(x;\mathbb{Q}) \neq 0$;
    \item $\mu_{+}(x) = d + n - 1$ for some integer $d$ with $|d - \hat{\mu}(x)| < \frac{1}{2}$.
\end{itemize}
Then 
\[
\mathrm{CH}_*(x;\mathbb{Q}) = 0 \quad \text{whenever} \quad * \neq \mu_{+}(x).
\]

Additionally, if $x = z^k$ is an admissible iteration of a closed Reeb orbit $z$, then for any admissible iteration $z^{k'}$,
\[
\mathrm{CH}_*(z^{k'};\mathbb{Q}) = 0 \quad \text{whenever} \quad * \neq \mu_{+}(z^{k'}).
\]
\end{theorem}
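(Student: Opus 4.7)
The plan is to interpret the hypotheses as a maximum-type extremality condition at $x$, invoke the concentration of equivariant local symplectic homology in that setting, and then propagate the conclusion across admissible iterates of $z$ via the iteration theory of local Floer/symplectic homology.

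\textbf{Step 1: Extremality.} Long's iteration theory provides the general bound $\mu_+(x) - \hat\mu(x) \leq n - 1$. The hypothesis $\mu_+(x) = d + n - 1$ with $|d - \hat\mu(x)| < \tfrac{1}{2}$ squeezes this difference into the range $(n - \tfrac{3}{2}, n - \tfrac{1}{2}]$, so it is extremally close to saturating the bound. Via the basic normal form decomposition of the linearized Poincar\'e return map $P_x$ on $\xi$, such near-saturation forces every basic block of $P_x$ to contribute its maximal gap $\mu_{+,i} - \hat\mu_i$. This algebraic signature identifies $x$ as a maximum-type orbit in the generalized SDM sense of \cite{GHHM13, CGG24}.

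\textbf{Step 2: Concentration at $\mu_+$.} Use the Gromoll--Meyer description of $S^1$-equivariant local symplectic homology (Fender \cite{Fe20}; see also McLean \cite{Mcl12, Mcl16} and the computations in \cite{CGG24}). For an orbit with the algebraic signature isolated in Step 1, the auxiliary generating function has a degenerate local maximum in the relevant directions, so the corresponding $S^1$-equivariant Gromoll--Meyer critical module is concentrated in the top degree $\mu_+(x)$. Together with the hypothesis $\mathrm{CH}_{\mu_+(x)}(x;\mathbb{Q}) \neq 0$, this yields the first claim.

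\textbf{Step 3: Iteration stability.} Assume now $x = z^k$ and let $z^{k'}$ be any admissible iterate. Long's iteration formula $\hat\mu(z^{k'}) = k' \hat\mu(z)$ and the precise iteration formulas for $\mu_\pm$ describe how these indices rescale. Admissibility of $z^{k'}$ ensures that the algebraic multiplicity of $1 \in \sigma(z^{k'})$ equals that of $1 \in \sigma(z)$; hence the basic normal form of the linearized return map of $z^{k'}$ is obtained from that of $z$ by an iteration that introduces no new $1$-eigenspaces. Consequently the extremality identified in Step 1 at $x = z^k$ propagates, mutatis mutandis, to every admissible iterate (with $d$ and $\mu_+$ rescaled accordingly), and Step 2 then gives $\mathrm{CH}_*(z^{k'};\mathbb{Q}) = 0$ for $* \neq \mu_+(z^{k'})$.

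\textbf{Main obstacle.} The principal technical difficulty is Step 3: transferring the maximum-type concentration from one admissible iterate $x = z^k$ to every other admissible iterate $z^{k'}$. This demands the full iteration theory of $S^1$-equivariant local symplectic homology, together with the observation that the admissibility hypothesis is exactly what prevents the creation of new $1$-eigenvalues under iteration that could otherwise destroy saturation of the mean-index versus upper-index gap. The framework required for this argument is developed in \cite{CGG24, Fe20} and builds on \cite{Mcl12}.
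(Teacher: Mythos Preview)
Your proposal correctly identifies the broad narrative (extremality, generating-function/local-maximum structure, then iteration), but it has two genuine gaps.

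\textbf{Step 2 is circular.} You assert that the algebraic signature from Step 1 forces the auxiliary generating function to have a degenerate local maximum, and hence the equivariant critical module is concentrated at $\mu_+(x)$. But index extremality alone does \emph{not} imply the generating function has a local maximum; that is precisely what the hypothesis $\mathrm{CH}_{\mu_+(x)}(x;\mathbb{Q})\neq 0$ is for. The paper's mechanism (Theorem~\ref{thm:vanish} and its proof) works in the opposite direction: one first reduces to local Floer homology of the Poincar\'e return map on $B^{2n-2}$, then constructs a specific Hamiltonian $K$ (via Ginzburg's generating-function construction from \cite{Gi10}) with the same time-1 map, and carries out an explicit eigenvalue computation (Claims 1 and 2 in Subsection 4.1.2) to show $\hat\mu(p,K)=0$. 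Only with this in hand does the nonvanishing hypothesis translate to $\mathrm{HF}_n(K,p)\neq 0$, after which Lemma~\ref{lem:KF}(b) yields the strict local maximum and hence concentration. You have skipped the computation $\hat\mu(p,K)=0$, which is the actual technical content.

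\textbf{Step 3 does not work as stated.} You claim the extremality condition $\mu_+ = d+n-1$, $|d-\hat\mu|<\tfrac12$ propagates from $z^k$ to every admissible iterate $z^{k'}$, after which you would reapply Step 2. But there is no reason a general admissible $z^{k'}$ should satisfy such a condition: the fractional part of $\hat\mu(z^{k'})=k'\hat\mu(z)$ can be anything, so no integer $d'$ with $|d'-\hat\mu(z^{k'})|<\tfrac12$ need align with $\mu_+(z^{k'})$. The paper bypasses this entirely: once concentration of $\mathrm{HF}_*(\varphi_H^l,0;\mathbb{Q})$ is established, it invokes the Ginzburg--G\"urel persistence isomorphism \cite[Theorem~1.1]{GG10}, which gives $\mathrm{HF}_{*+s}(\varphi_H^l,0;\mathbb{Q})\cong\mathrm{HF}_*(\varphi_H^{l'},0;\mathbb{Q})$ for admissible iterates with an explicit shift $s=\mu_+(z^k)-\mu_+(z^{k'})$. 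Concentration in a single degree is then transported directly, with no need to re-verify any index extremality at $z^{k'}$.
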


\begin{remark}
In fact, our proof in Section \ref{sec:local max} shows that Theorem \ref{thm:local max} holds whenever the absolute grading of $\mathrm{CH}_*(x;\mathbb{Q})$ is well-defined. This includes contact manifolds $(\Sigma^{2n-1},\xi)$ satisfying $c_1(\xi) = 0 \in H^2(\Sigma;\mathbb{Z})$ and $H_1(\Sigma;\mathbb{Q}) = 0$; see Remark \ref{rmk:frac grad}. The requirement of $\mathbb{Q}$-coefficients is essential and does not extend to fields of positive characteristic $p \geq 2$; see Remark \ref{rmk:coeffient field}.
\end{remark}

\begin{remark}
We conjecture that the second condition $\mu_{+}(x) = d + n - 1$ can be eliminated. This conjecture is motivated by an analogous result for equivariant critical modules of closed characteristics established in \cite[Proposition 2.6]{Wang09}. Some alternative attempt is discussed in Remark \ref{rmk:attempt}.
\end{remark}

\subsection{Organization and notation}
\subsubsection{Organization}
The paper is organized as follows. Sections \ref{sec:pre} and \ref{sec:index} present preliminary material, which is well-known to experts but included for the reader's convenience. In Sections \ref{subsec:local Floer} and \ref{subsec:local SH}, we review local Floer-theoretic invariants for closed orbits and their  properties, including local Floer homology, equivariant and non-equivariant local symplectic homology. Section \ref{subsec:GF} recalls the definition of generating functions in $\mathbb{R}^{2n}$.

The main focus of Section \ref{sec:index} are Maslov-type indices for symplectic paths,  extending from the Conley-Zehnder index, and their iteration theory.  This includes basic definitions and properties in Section \ref{subsec:index}, abstract precise iteration formulas in Section \ref{subsec:ind iterate}, and the Common Index Jump Theorem in Section \ref{subsec:CIJT}.

Section \ref{sec:local max} contains the proof of Theorem \ref{thm:local max} via an analogue for local Floer homology. In Section \ref{subsec:ind interval}, we summarize key arguments from \cite{CGG24}, and Section \ref{subsect:proof} presents the proof of Theorem \ref{thm:main}.

\subsubsection{Conventions and notation}\label{subsec:nontation}
 For the reader's convenience, we summarize key notations here.  Some will be restated in the context when they appear.

Let $\mathbb{N}$, $\mathbb{Z}$, $\mathbb{Q}$, $\mathbb{R}$, and $\mathbb{C}$ denote the sets of positive integers, integers, rational numbers, real numbers, and complex numbers respectively. Both $S^1$ and $\mathbb{U}$ denote the unit circle. We use $\mathbb{U}$ when it is viewed as a subset of $\mathbb{C}$. Let $\mathbb{F}$ denote an arbitrary field, and $\mathbb{F}_p$ denote a field with positive characteristic $p\geq 2$. Let $\mathbb{Z}_p$ denote the field given by $\mathbb{Z}/p\mathbb{Z}$ when integer $p\geq 2$ is prime.

For a linear transformation $M$, let $\sigma(M)$ denote its eigenvalue set.  Both $\mathrm{id}$ and $\mathrm{Id}$ denote the identity map. We consistently use $\mathrm{Id}$ when referring to the identity linear transformation. When the dimension of the underlying vector space on which the identity transformation acts needs to be specified, we use the notation $\mathrm{Id}_m$, where the subscript 
$m$ explicitly indicates this dimension.

The standard symplectic vector space $(\mathbb{R}^{2m}, \omega_{\mathrm{std}})$ has coordinates $(x_1,\ldots,x_m,y_1,\ldots,y_m)$ with symplectic form
\[
\omega_{\mathrm{std}} := \sum_{i=1}^m \mathrm{d}y_i \wedge \mathrm{d}x_i.
\]
Denote by $\mathrm{Sp}(2m)$ the symplectic group  equipped with the subspace topology from $\mathbb{R}^{4m^2}$, i.e.,
\begin{equation*}
  \mathrm{Sp}(2m)=\{M\in\mathrm{GL}(2m;\mathbb{R}): M^TJM=J\},
\end{equation*} 
where $\mathrm{GL}(2m;\mathbb{R})$ denotes the set of all the linear transformations on $\mathbb{R}^{2m}$ and $J:=\begin{pmatrix}
            0 & -\mathrm{Id}_m \\
            \mathrm{Id}_m & 0 
          \end{pmatrix}$.
For two symplectic matrices in block form:
\[
M_k = \begin{pmatrix}
A_k & B_k \\
C_k & D_k
\end{pmatrix} \in \mathrm{Sp}(2m_k), \quad k=1,2,
\]
their $\diamond$-product $M_1 \diamond M_2 \in \mathrm{Sp}(2m_1 + 2m_2)$ is defined as:
\[
M_1 \diamond M_2 = \begin{pmatrix}
A_1 & 0 & B_1 & 0 \\
0 & A_2 & 0 & B_2 \\
C_1 & 0 & D_1 & 0 \\
0 & C_2 & 0 & D_2
\end{pmatrix},
\]
representing the {\it symplectic direct sum} in the canonical coordinates of $\mathbb{R}^{2(m_1+m_2)}$.


We use the floor and ceiling functions defined by:
\begin{equation*}\lfloor a\rfloor = \max{\{k \in \mathbb{Z}
\mid k \leq a\}},~~\lceil a\rceil = \min{\{k \in  \mathbb{Z} \mid k \geq
a\}}.
\end{equation*}

\section{Preliminary}\label{sec:pre}


\subsection{Local Floer homology }\label{subsec:local Floer}

\subsubsection{Basic definition and properties}
Fix a ground field $\mathbb{F}$. Let $(W^{2n},\omega)$ be a symplectic manifold, and $H:S^1\times W^{2n}\to \mathbb{R}$ be a smooth 1-periodic  Hamiltonian on $W^{2n}$. Let $x$ be a 1-periodic orbit of $H$, i.e.,  a 1-periodic solution of the system
\begin{equation*}
    \dot{x}(t)=X_{H_t}(x(t)),
\end{equation*}
where $X_{H_t}$ denotes the Hamiltonian vector field defined by $\omega(X_{H_t},\cdot)=-\mathrm{d}H_t$. Suppose $x$ is isolated.  Let $\varphi_H^t$ be the Hamiltonian flow of $H$.
Choose a sufficiently small tubular neighborhood $U$ of $x$. As in \cite[Theorem 9.1]{SZ92} we can take a $C^2$-small perturbation $\tilde{H}$ of $H$ supported in $U$ such that all the 1-periodic orbits $\tilde{x}$ of $\tilde{H}$ entering $U$ are {\it non-degenerate}, that is, $\mathrm{d}\varphi_{\tilde{H}}^1(\tilde{x}(0)):T_{\tilde{x}(0)}W\to T_{\tilde{x}(0)}W$ has no eigenvalue equal to 1.  When $\tilde{H}$ is sufficiently $C^2$-close to $H$ and $U$ is sufficiently small, every Floer trajectory $u$ connecting two 1-periodic orbits of $\tilde{H}$ lying in $U$, whether broken or not, is also contained in $U$. ( This  follows from the energy separation argument induced by Gromov's compactness,  see \cite[Section 1.5]{Sal99}.) 

Choose a generic almost complex structure $J$ such that the regularity conditions are satisfied. Then we can construct a Floer complex over $\mathbb{F}$ on $U$ in the standard way: the generators are given by the 1-periodic orbits of $\tilde{H}$ contained in $U$ (hence close to $x$), graded by the Conley-Zehnder index, and the differential is given by counting Floer trajectories. The standard continuation argument in Floer theory shows that the homology of Floer complex is independent of the choice of $\tilde{H}$,  the almost complex structure, and the sufficiently small neighborhood $U$; see \cite{SZ92, Sal99, Gi10} for more details.
We refer to the resulting homology group $\mathrm{HF}_*(H,x)$ as the {\it local Floer homology} of $H$ at $x$.  We omit the coefficient filed $\mathbb{F}$ unless there is a specific notation.  Sometime we also denote the local Floer homology by $\mathrm{HF}_*(\varphi,x)$, where $\varphi=\varphi_H^1$ is the time-1 map.

The absolute grading of the local Floer homology is determined by fixing a symplectic trivialization of the tangent bundle $TW$ along the orbit $x$. This trivialization induces corresponding trivializations along all non-degenerate closed orbits $\tilde{x}$ that $x$ splits into, and yields the associated linearized symplectic paths used to define the Conley-Zehnder index (will be explained in Section \ref{subsec:index}). 

Furthermore, when the first Chern class $c_1(TW) = 0$, there exists a canonical construction of symplectic trivializations for contractible orbits, see Subsection \ref{subsect:index for orbits}. This ensures the Conley-Zehnder index is well-defined for such orbits. Under this condition, we always assume the grading is given by these Conley-Zehnder indices.

\begin{example}[{cf. \cite[Example 3.4]{Gi10}}]\label{exam:non degen HF}
    Assume $c_1(TW)=0$, $x$ is non-degenerate and contractible. Then the local Floer homology of $H$ at $x$ is given by
\begin{equation*}
    \mathrm{HF}_*(H,x)=\left\{
    \begin{aligned}
        &\mathbb{F},&~*=\mu(x),\\
        &0&~\text{otherwise,}
    \end{aligned}
    \right.
\end{equation*}
where $\mu(x)$ denotes the Conley-Zehnder index defined in Subsection \ref{subsect:index for orbits}. In fact, due to the non-degeneracy of $x$, we can find a  neighborhood $U$ of $x$ such that the Floer complex on $U$ has exactly one generator $x$. 
\end{example}

In general,  the single isolated 1-periodic orbit $x$ in this construction can be replaced by an isolated connected compact set of 1-periodic orbits of $H$; see \cite{Mcl12}.

We list some basic properties of local Floer homology we need in this paper. For a more detailed account, we refer the reader to \cite[Section 3.2]{Gi10}, \cite[Section 3.2]{GG10} and their reference.

\begin{itemize}
\item [\hypertarget{list:LF1}{(\text{LF1})}]
(Homotopy invariance) Let $H^s$, $s\in[0,1]$, be a family of Hamiltionians such that $x$ is a uniformly isolated one-periodic orbit for all $H^s$, i.e., $x$ is the only periodic orbit of $H^s$ for all $s$, in some open set independent of $s$. Then $\mathrm{HF}_*(H^s,x)$ is constant throughout the family. In particular, $\mathrm{HF}_*(H^0,x)=\mathrm{HF}_*(H^1,x)$.

\quad

\item [\hypertarget{list:LF2}{\text(LF2)}](K\"{u}nneth formula) Let $x_1$ and $x_2$ be 1-periodic orbits of the Hamiltonians $H_1$ and $H_2$, respectively, on the sympletic manifolds $M_1$ and $M_2$. Then 
\begin{equation*}
    \mathrm{HF}_*(H_1+H_2,(x_1,x_2))= \bigoplus_{*'+*''=*}\mathrm{HF}_{*'}(H_1,x_1)\otimes \mathrm{HF}_{*''}(H_2,x_2),
\end{equation*}
where $H_1+H_2$ is the naturally defined Hamiltonian on $M_1\times M_2$.
\end{itemize}

\quad

When $G$ and $H$ are two 1-periodic Hamiltonians, the composition $G\#H$ is defined by 
\[
(G\#H)_t=G_t+H_t\circ(\varphi_G^t)^{-1}.
\]
The flow of $G\#H$ is $\varphi_G^t\circ\varphi_H^t$. In general, $G\#H$ is not 1-periodic. However, when $\varphi_G^1=\mathrm{id}$, $G\#H$ is  1-periodic automatically. Consider $\overline{H}$  given by  $\overline{H}_t:=-H_t\circ\varphi_H^t$, then we have $\varphi_{\overline{H}}^t=(\varphi_H^t)^{-1}$ and $\overline{H}\#H=0=H\#\overline{H}$.
\begin{itemize}
\item [\hypertarget{list:LF3}{\text(LF3)}] Let $x(t)\equiv p$ be a constant orbit of the Hamiltonian $H$. Let $\varphi_G^t$ with $\varphi_G^0=\varphi_G^1=\mathrm{id}$ be a loop of Hamiltonian diffeomorphisms defined on a neighborhood of $p$ and fixing $p$. Then
\begin{equation*}
    \mathrm{HF}_{*+2\mu_{\rm Maslov}}(G\#H,p)=\mathrm{HF}_*(H,p),
\end{equation*}
where $\mu_{\rm Maslov}$ denotes the Maslov index of the loop $t\mapsto \mathrm{d}(\varphi_G^t)_p\in \mathrm{Sp}(2n)$.
\end{itemize}

 (For a loop $\gamma(t)$ in $\mathrm{Sp}(2n)$, the {\it Maslov index} $\mu_{\mathrm{Maslov}}(\gamma)$ corresponds to its homotopy class in $\pi_1(\mathrm{Sp}(2n)) \simeq \mathbb{Z}$; see \cite[Section 2.2]{MS17} for details.)

\begin{itemize}
    \item[\hypertarget{list:LF4}{(LF4)}]Define the support of $\mathrm{HF}_*(H,x)$ by $\mathrm{supp}\mathrm{HF}_*(H,x):=\{m\in\mathbb{Z}:\mathrm{HF}_m(H,x)\neq0\}$. Then we have
         \[\mathrm{supp}\mathrm{HF}_*(H,x)\subset[\mu_-(x),\mu_+(x)]\subset[\hat\mu(x)-n,\hat\mu(x)+n],\]
where $\mu_{\pm}$ denote the upper and lower Conley-Zehnder index and $\hat \mu$ denotes the mean index, which we will introduce in Section \ref{sec:index}.
\end{itemize}

\subsubsection{Local Floer homology via local Morse homology}
Let $f: W^m \to \mathbb{R}$ be a smooth function on a manifold $W$ (not necessarily symplectic) and $p \in W$ be an isolated critical point of $f$. The \textit{local Morse homology} $\mathrm{HM}_*(f, p)$ is defined by choosing an isolating neighborhood $U$ of $p$ and a small Morse perturbation $\tilde{f}$ of $f$. Specifically, the chain complex is generated by critical points of $\tilde{f}$ in $U$, with the differential defined via counting trajectories of the negative gradient flow between critical points; see \cite{Sch93,Gi10,AD14} for details. 

Compared to Floer homology, local Morse homology is more accessible due to its foundation in classical Morse theory, which provides an intuitive geometric interpretation of the homology groups through gradient flow dynamics. A fundamental property of Floer homology is that, for a $C^2$-small autonomous Hamiltonian $H$,  its Floer homology is equal to its Morse homology. In \cite{Gi10}, Ginzburg proved a slightly more general version of this fact, which allows us to understand the local Floer homology  of a ``relatively autonomous'' Hamiltonian (this concept was introduced in \cite[Lemma 4]{Hin09}) by the Morse homology of a reference function. This argument will play a crucial role in our poof of Theorem \ref{thm:local max}.

\begin{lemma}[{\cite[Lemma 3.6]{Gi10}}]\label{lem:KF}
Let $F$ be a smooth function and let $K$ be a $T$-periodic Hamiltonian, both defined on a neighborhood of a point $p$ in $W^{2n}$. Assume that $p$ is  an isolated critical point of $F$, and the following conditions are satisfied:
\begin{itemize}
    \item The inequalities $\Vert X_{K_t}(z)-X_F(z)\Vert\leq \epsilon \Vert X_F(z) \Vert$ and $\Vert \dot{X}_{K_t}(z)\Vert\leq\epsilon\Vert X_{F}(z)\Vert$ hold point-wise for $z$ near $p$ for all $t\in S^1$. (The dot stands for the derivative with respect to time.)
    \item The Hessian $\mathrm{d}^2(K_t)_p$ and $\mathrm{d}^2F_p$ and the constant $\epsilon\geq 0$ are sufficiently small. Namely, $\epsilon<1$ and 
    \begin{equation*}
        T \cdot(\epsilon(1-\epsilon)^{-1}+\max_t\Vert \mathrm{d}^2(K_t)_p\Vert +\Vert\mathrm{d}^2 F_p\Vert)<2\pi. 
    \end{equation*}   
\end{itemize}
Then $p$ is an isolated $T$-periodic orbit of $K$. Futhermore,
\begin{itemize}
    \item[(a)] $\mathrm{HF}_*(K,p)=\mathrm{HM}_{*+n}(F,p)$;
    \item[(b)] if $\mathrm{HF}_n(K,p)\neq0$, the functions $K_t$ for all $t$ and $F$ have a strict local maximum at $p$.
\end{itemize}
\end{lemma}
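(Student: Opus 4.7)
The plan is to verify the three conclusions in order: first, that $p$ is an isolated $T$-periodic orbit of $K$; next, the homology identification in part (a); and finally, the strict local maximum conclusion in part (b).

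For the isolation claim, I would first observe that setting $z=p$ in the bound $\|X_{K_t}(z)-X_F(z)\| \leq \epsilon \|X_F(z)\|$ together with $X_F(p)=0$ forces $X_{K_t}(p)=0$ for all $t$, so $p$ is a constant $T$-periodic orbit of $K$. To rule out other $T$-periodic orbits nearby, I would argue by a Gronwall-type estimate: any small $T$-periodic orbit $\gamma$ of $K$ satisfies $\gamma(T)=\gamma(0)$, yet the linearized time-$T$ map at $p$ is a perturbation, controlled by $\epsilon$ and by $\dot X_{K_t}$, of the time-$T$ flow of the linear Hamiltonian generated by $\tfrac{1}{2}\mathrm{d}^2 F_p$. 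The hypothesis $T\bigl(\epsilon(1-\epsilon)^{-1}+\max_t \|\mathrm{d}^2(K_t)_p\|+\|\mathrm{d}^2 F_p\|\bigr) < 2\pi$ ensures that this flow rotates each planar block strictly less than a full turn in time $T$, so $1$ cannot arise as an eigenvalue in a way that would close up a nontrivial orbit. The $\dot X_{K_t}$ condition is what lets one absorb the time-dependence when passing from the nonlinear to the linear estimate.

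For part (a), the strategy is a homotopy argument. Consider the interpolation $K^s := (1-s)F + sK$ for $s \in [0,1]$ and verify that the pair of hypotheses in the lemma is preserved throughout the family with uniform constants (the gradient bounds scale linearly in $s$, and the Hessian term only decreases). The isolation argument then applies uniformly in $s$, showing $p$ is a uniformly isolated $T$-periodic orbit of the family. Homotopy invariance, as in property (LF1), yields $\mathrm{HF}_*(K,p)=\mathrm{HF}_*(F,p)$. Finally, for an autonomous Hamiltonian $F$ whose Hessian is small enough that $T\|\mathrm{d}^2 F_p\| < 2\pi$, a standard computation with slope-$1$ profile perturbations identifies $\mathrm{HF}_*(F,p)=\mathrm{HM}_{*+n}(F,p)$; the grading shift of $n$ reflects the convention that the Conley--Zehnder index of a $C^2$-small autonomous Hamiltonian at a critical point equals its Morse index minus $n$.

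For part (b), assuming $\mathrm{HF}_n(K,p)\neq 0$, part (a) gives $\mathrm{HM}_{2n}(F,p)\neq 0$. Since $F$ is defined on a manifold of dimension $2n$ and isolated top-degree local Morse homology is nontrivial only at a strict local maximum, $F$ itself must have a strict local maximum at $p$. For the time-dependent $K_t$, I would differentiate the hypothesis $\|X_{K_t}(z)-X_F(z)\| \leq \epsilon \|X_F(z)\|$ at $z=p$ to obtain a norm bound on $\mathrm{d}^2(K_t)_p - \mathrm{d}^2 F_p$ of order $\epsilon\|\mathrm{d}^2 F_p\|$, and then integrate the gradient difference $\nabla K_t-\nabla F$ along radial segments emanating from $p$ to conclude that $K_t - F$ is constant to second order at $p$ up to an error dominated by the strict decrease of $F$. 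Combined with the strict local maximality of $F$, this forces $K_t$ to have a strict local maximum at $p$ for every $t$.

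The main obstacle I anticipate is the uniform isolation estimate: one must carefully track the linearized flow throughout the interpolation and show that the $2\pi$ threshold is genuinely sharp enough to rule out even very small nontrivial $T$-periodic orbits, which requires a delicate Gronwall comparison between the flow of $K^s$ and a frozen linear model. The argument in part (b) that a strict maximum of $F$ propagates to every $K_t$ is also subtle and genuinely uses both the gradient bound and the time-derivative bound rather than merely the smallness of $\mathrm{d}^2(K_t)_p$.
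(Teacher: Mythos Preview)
The paper does not prove this lemma at all: it is quoted from \cite[Lemma 3.6]{Gi10} and used as a black box, with only Remark~\ref{rmk:local maximal} drawing an immediate consequence. So there is no ``paper's own proof'' to compare against; your proposal is a sketch of Ginzburg's original argument rather than of anything in the present paper.

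That said, your outline is essentially the right one and matches Ginzburg's approach: the linear interpolation $K^s=(1-s)F+sK$ does preserve both hypotheses uniformly in $s$, giving (a) via (LF1) and the standard identification of local Floer and local Morse homology for $C^2$-small autonomous Hamiltonians. One genuine gap: your argument for the $K_t$ part of (b) via integration along \emph{radial} segments does not work as stated. For a degenerate strict local maximum, $\nabla F$ need not be radial along such a segment, so the bound $\|\nabla K_t-\nabla F\|\le\epsilon\|\nabla F\|$ does not let you compare $\int\nabla K_t\cdot(z-p)$ to $F(z)-F(p)$ without loss. The correct curves are the \emph{gradient flow lines} of $F$: since $p$ is an isolated strict local maximum, every nearby point flows to $p$ under $\dot\gamma=\nabla F$, and along such a flow line
\[
\frac{d}{d\tau}K_t(\gamma(\tau))=\langle\nabla K_t,\nabla F\rangle\ge(1-\epsilon)\|\nabla F\|^2>0,
\]
so $K_t$ strictly increases toward $p$, forcing $K_t(z)<K_t(p)$. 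Note that this argument uses only the first inequality in the hypotheses, not the bound on $\dot X_{K_t}$; the latter is needed for the isolation and homotopy steps, not here.
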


\begin{remark}\label{rmk:local maximal} We can infer from (a) and (b) immediately that, \[\mathrm{HF}_n(K,p)\neq0 \Rightarrow \mathrm{HF}_*(K,p)=0~\text{whenever}~ *\neq n,\] since in $W^{2n}$
 the Morse homology of a smooth function $F$ at a strict local maximum point $p$ is
 \begin{equation*}
     \mathrm{HM}_*(F,p)=\left\{
     \begin{aligned}
     &\mathbb{F},& *=2n;\\
     &0,&\text{otherwise;}
     \end{aligned}
     \right.
 \end{equation*}
 see \cite[Example 3.2]{Gi10}.
\end{remark}

\subsection{Local symplectic homology}\label{subsec:local SH}
\subsubsection{Basic definitions and facts}

Let $(\Sigma,\alpha)$ be a contact manifold with contact structure $\xi=\ker\alpha$, and $x$ be an isolated closed Reeb orbit, not necessarily prime, on $\Sigma$.  Fix an isolating neighborhood $U$ of $x(\mathbb{R})$ in $\Sigma$, which means that $U$ contains no other closed orbit of period close to $\mathcal{A}(x)$. Consider the symplectization $W=(1-\delta,1+\delta)\times U$ with coordinates $(r,z)$ and the symplectic form $\omega=\mathrm{d}(r\alpha)$. Take a Hamiltonian $h(r):W\to\mathbb{R}$ only depending on the $r$-coordinate with $h'(1)=\mathcal{A}(x)$ and $h''>0$. The Hamiltonian vector field of $h$ on $W$ is given by
\[
X_h(r,z)=(0,h'(r)R_\alpha).
\]
Then $x$ gives rise to an isolated set $S$ of 1-periodic orbits of $h$ with initial conditions on $x(\mathbb{R})$, which admits an $S^1$-action given by translation on time. Fix a ground field $\mathbb{F}$, the  {\it local symplectic homology} $\mathrm{SH}_*(x;\mathbb{F})$ is defined by the local Floer homology $\mathrm{HF}_*(h;\mathbb{F})$ of $h$ at $S$.

 More explicitly, we can describe $\mathrm{SH}_*(x;\mathbb{F})$ as follows. Consider a small non-degenerate perturbation of $\alpha$. Then $x$ splits into a finite collection of non-degenerate  closed Reeb orbits $x_i$.  By a Morse-Bott construction as in \cite{BO09mb}, $\mathrm{SH}_*(x;\mathbb{F})$ is the homology of a certain complex with generators $\check{x}_i$ and $\hat{x}_i$ which are born from $x_i$ and are of degrees $\mu(x_i)$ and $\mu(x_i)+1$ respectively, and the differential is given by counting the Morse-Bott broken trajectories; see also \cite{GG20,CGGM23}.

 The absolute grading of the local symplectic homology is determined by fixing a symplecic trivialization of the contact structure $\xi=\ker\alpha$ along $x$. (Note that such a trivialization gives rise to trivializations along all closed orbits $x_i$ that $x$ splits into.) Furthermore, when $c_1(\xi) = 0$ and $H^1(\Sigma;\mathbb{Q})=0$, there exists a canonical construction of symplectic trivializations for each closed orbits, which ensures the Conley-Zehnder index is well-defined; see Subsection \ref{subsect:index for orbits} and Remark \ref{rmk:frac grad}.  Under this condition, we always assume the grading is given by these Conley-Zehnder indices.

 The local symplectic homology has an $S^1$-equivariant counterpart, which we denoted by $\mathrm{CH}_*(x;\mathbb{F})$. We refer the reader to  \cite{BO13gysin,GG20} for the details of the definition. When $\mathbb{F}=\mathbb{Q}$,  consider a small perturbation of $\alpha$ such that $x$ splits into  non-degenerate  closed  orbits $x_i$ again. Then $\mathrm{CH}_*(x;\mathbb{Q})$ can be described as the homology of a complex generated by the  orbits $x_i$ and graded by the Conley-Zehnder index; see \cite[Section 2.3]{GG20}.

\begin{example}\label{exam:non-deg SH}
    Assume $x$ is  non-degenerate, then we have
    \begin{equation*}
        \mathrm{SH_*}(x;\mathbb{Z}_2)=\left\{
        \begin{aligned}
          \mathbb{Z}_2,~~~~~&*=\mu(x)~\text{or}~\mu(x)+1;\\
          0,~~~~~&\text{otherwise};
        \end{aligned}
        \right.
    \end{equation*}
see \cite[Proposition 2.2]{CFHW96}. In addition, assume $x=y^k$, $k\in\mathbb{N}$, where $y$ is the underlying prime closed orbit.  Non-degenerate orbit $x$ is called {\it bad} if  $\mu(x)$ and $\mu(y)$ have different parity, or equivalently,  $y$ has an odd number of Floquet multipliers (counted with algebraic multiplicity) in $(-1,0)$ and $k$ is even; otherwise, $x$ is called {\it good}. Then for a good orbit $x$, we have
    \begin{equation*}
        \mathrm{CH_*}(x;\mathbb{Q})=\left\{
        \begin{aligned}
          \mathbb{Q},~~~~~&*=\mu(x);\\
          0,~~~~~&\text{otherwise};
        \end{aligned}
        \right.
    \end{equation*}
while for a bad orbit $x$, $\mathrm{CH}_*(x;\mathbb{Q})=0$ in all degrees; see \cite[Example 2.19]{GG20}.    
\end{example}

\begin{remark}\label{rmk:coeffient field} As mentioned in \cite[Remark 2.6]{CGG24}, the description for $\mathrm{CH}_*(x;\mathbb{Q})$ does not carry over to a field of positive characteristic $p\geq2$. For instance, for a non-degenerate orbit $x=y^p$, $y$ is prime, $\mathrm{CH}_*(x;\mathbb{F}_p)$ is isomorphic to the homology of a infinite-dimensional lens space $S^\infty/\mathbb{Z}_p$ over $\mathbb{F}_p$.    
\end{remark}

As in \hyperlink{list:LF4}{(LF4)} of Section \ref{subsec:local Floer}, the supports of $\mathrm{SH}_*(x;\mathbb{F})$ or $\mathrm{CH}_*(x;\mathbb{Q})$ are also bounded by the Maslov-type indices. Namely, we have
\begin{equation}\label{eq:supp SH}
    \mathrm{supp}~\mathrm{SH_*}(x;\mathbb{F}) \subset [\mu_-(x),\mu_+(x)+1]\subset[\hat\mu(x)-n+1,\hat\mu(x)+n]
\end{equation}
and
\begin{equation}\label{eq:supp CH}
    \mathrm{supp}~\mathrm{CH_*}(x;\mathbb{Q}) \subset [\mu_-(x),\mu_+(x)]\subset[\hat\mu(x)-n+1,\hat\mu(x)+n-1].
\end{equation}
 Furthermore, it is shown in \cite{GG20} that equivariant and non-equivariant local symplectic homology fit into the Gysin exact sequence 
\begin{equation*}
  \dots\rightarrow \mathrm{SH}_*(x;\mathbb{F})\rightarrow \mathrm{CH}_*(x;\mathbb{F})
  \stackrel{\mathcal{D}}\rightarrow \mathrm{CH}_{*-2}(x;\mathbb{F})
  \rightarrow \mathrm{SH}_{*-1}(x;\mathbb{F})\rightarrow\cdots.
\end{equation*}
And by \cite[Proposition 2.21]{GG20}, the shift operator $\mathcal{D}$ vanishes when $\mathbb{F}=\mathbb{Q}$. Consequently, we have 
\begin{equation}\label{eq:SH split}
    \mathrm{SH}_*(x;\mathbb{Q})=\mathrm{CH}_*(x;\mathbb{Q})\oplus\mathrm{CH}_{*-1}(x;\mathbb{Q})
\end{equation}
since the Gysin sequence splits into short exact sequences.

 A closed Reeb orbit $x$ is called {\it$\mathbb{F}$-visible} if $\mathrm{SH}(x;\mathbb{F})\neq0$, is called {\it equivariantly $\mathbb{F}$-visible } if $\mathrm{CH}(x;\mathbb{F})\neq0$. By the Gysin exact sequence,  a orbit is $\mathbb{F}$-visible implies that it is equivariantly $\mathbb{F}$-visible. And $\mathbb{Q}$-visiblity is equivalent to equivariantly $\mathbb{Q}$-visiblity due to \eqref{eq:SH split}. 
 
 The {\it equivariant Euler characteristic} of $x$ is defined as
 \begin{equation*}
   \chi^{\rm eq}(x)=\sum_m(-1)^m\dim \mathrm{CH}_m(x;\mathbb{Q}).
 \end{equation*}
 For instance, $\chi^{\rm eq}(x)=0$ if $x$ is not $\mathbb{Q}$-visible. According to Example \ref{exam:non-deg SH}, if $x$ is non-degenerate, then $\chi^{\rm eq}(x)$ is either $(-1)^{\mu(x)}$ or 0 depending on whether $x$ is good or bad. 
 
 The following result from \cite{CGG24} will help us to determine $\mathbb{Q}$-visibility for certain orbit iterations in Section \ref{subsec:ind interval}. 
 \begin{lemma}[{\cite[Lemma 2.8]{CGG24}}]\label{lem:euler}
   Let $x$ be an isolated (not necessarily prime) closed Reeb orbit. Assume that $k$ is odd and iteration $x^k$ is admissible. Then
   \begin{equation*}
     \chi^{\rm eq}(x^k)=\chi^{\rm eq} (x).
   \end{equation*}
   In particular, $x^k$ is $\mathbb{Q}$-visible when $\chi^{\rm eq}(x)\neq 0$.
   
   Moreover, the condition $k$ is odd can be dropped provided $x$ has an even number of Floquet multipliers (counted with algebraic multiplicity) in $(-1,0)$. 
 \end{lemma}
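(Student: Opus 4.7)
The plan is to reduce the computation of $\chi^{\rm eq}$ to the non-degenerate case via a Morse-Bott perturbation, and then compare the two Euler characteristics term by term using a direct eigenvalue analysis of the linearized Poincar\'e map.

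First, I would take a small non-degenerate perturbation $\alpha_\varepsilon$ of $\alpha$ supported in an isolating tubular neighborhood of $x(\mathbb{R})$, so that the closed orbits of $\alpha_\varepsilon$ near $x$ consist of finitely many non-degenerate orbits $x_1,\ldots,x_m$. The admissibility assumption forces the algebraic multiplicity of $1$ in $\sigma(x^k)$ to equal that in $\sigma(x)$, which rules out any eigenvalue of the linearized Poincar\'e map $P$ of $x$ at a primitive $k$-th root of unity. Consequently the same $\alpha_\varepsilon$ exhibits $x^k$ as splitting exactly into the $k$-fold iterations $x_1^k,\ldots,x_m^k$, with no extra short orbits bifurcating off $x^k$. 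From the Morse-Bott description of equivariant local symplectic homology in Section \ref{subsec:local SH} together with Example \ref{exam:non-deg SH}, this yields
\[
\chi^{\rm eq}(x)=\sum_{i:\, x_i\text{ good}}(-1)^{\mu(x_i)},\qquad \chi^{\rm eq}(x^k)=\sum_{i:\, x_i^k\text{ good}}(-1)^{\mu(x_i^k)}.
\]

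The main step is then a term-by-term comparison using two facts: (i) $x_i^k$ is good iff $x_i$ is good, and (ii) when good, $\mu(x_i^k)\equiv \mu(x_i)\pmod{2}$. For $k$ odd, (i) is immediate from the definition of good/bad, since badness requires $k$ even together with an odd number of Floquet multipliers of $x_i$ in $(-1,0)$. For (ii), I would invoke the standard identity $(-1)^{\mu(y)}=\mathrm{sign}\,\det(\mathrm{Id}-P_y)$ for a non-degenerate orbit $y$ and expand $\det(\mathrm{Id}-P^k)=\prod_{\lambda\in\sigma(P)}(1-\lambda^k)$. Splitting into the three eigenvalue types (unit circle, positive real hyperbolic pair $\lambda,\lambda^{-1}$, negative real hyperbolic pair), a direct sign check confirms $\mathrm{sign}\,\det(\mathrm{Id}-P^k)=\mathrm{sign}\,\det(\mathrm{Id}-P)$ whenever $k$ is odd. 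Combining (i) and (ii) gives $\chi^{\rm eq}(x^k)=\chi^{\rm eq}(x)$, and $\mathbb{Q}$-visibility of $x^k$ follows at once.

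For the ``moreover'' clause I would track sign contributions from real negative hyperbolic pairs alone, since only those flip sign when we pass from $k$ odd to $k$ even. Each such pair $(\lambda,\lambda^{-1})$ with $\lambda\in(-1,0)$ contributes a factor $-1$ to the ratio $\mathrm{sign}\,\det(\mathrm{Id}-P^k)/\mathrm{sign}\,\det(\mathrm{Id}-P)$ for even $k$. Under the hypothesis that $x$ has an even number of Floquet multipliers in $(-1,0)$, the perturbed $x_i$ inherit the same parity (continuity of the spectrum away from $\pm 1$, together with the admissibility exclusion above), so the total sign contribution is $(-1)^{\text{even}}=1$ and both facts (i), (ii) carry over, giving the same identity for all $k$.

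The main obstacle I anticipate is not the sign bookkeeping but the justification that a single perturbation $\alpha_\varepsilon$ simultaneously realizes the splittings of $x$ and of $x^k$, with the splittings matched by iteration. This is precisely where admissibility is used: without it, the linearized Poincar\'e map of $x$ could have eigenvalues at primitive $k$-th roots of unity, and the bifurcation diagram of $x^k$ under perturbation could contain new orbits unrelated to the $x_i^k$, in which case the Morse-Bott complex for $x^k$ would contain generators with no counterpart in the complex for $x$, and no term-by-term comparison would be possible. Once this compatibility is set up rigorously, the remainder of the argument is routine.
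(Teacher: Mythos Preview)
The paper does not give its own proof of this lemma; it is quoted verbatim from \cite{CGG24} and used as a black box, so there is nothing in the paper to compare against. What follows is an assessment of your proposed argument on its own merits.

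Your overall strategy --- perturb to a non-degenerate splitting, write each $\chi^{\rm eq}$ as a signed count over good orbits, and compare term by term --- is the natural one and is essentially how this kind of statement is proved. The parity bookkeeping via $(-1)^{\mu(y)}=(-1)^{m}\mathrm{sign}\det(\mathrm{Id}-P_y)$ (note the missing $(-1)^m$ in your formula; cf.\ \eqref{eq:parity}) and the eigenvalue-by-eigenvalue sign analysis for $\det(\mathrm{Id}-P^k)$ are correct. Your treatment of the ``moreover'' clause is also right once you observe that (i) and (ii) are \emph{equivalent}, both reducing to $\mu(x_i^k)\equiv\mu(x_i)\pmod 2$, so it suffices that each $x_i$ inherits the even parity of Floquet multipliers in $(-1,0)$ from $x$; admissibility for even $k$ excludes the eigenvalue $-1$, and eigenvalues of $P_i$ near $1$ cannot wander into $(-1,0)$, so the parity is preserved.

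The one place where your sketch is genuinely thin is the compatibility-of-splitting step. You assert that admissibility rules out eigenvalues at primitive $k$-th roots of unity (in fact it rules out \emph{all} nontrivial $k$-th roots), and that therefore the same $\alpha_\varepsilon$ realizes $x^k$ as splitting into exactly the $x_i^k$. This is true, but the reason requires a quantitative statement: for perturbations small relative to the distance of $\sigma(P)$ from the set of nontrivial $k$-th roots of unity, no period-$d$ orbit (with $1<d\mid k$) of the perturbed Poincar\'e map can bifurcate off a fixed point near $0$. Eigenvalues coming from the degenerate block at $1$ stay in a small disc about $1$, hence away from $e^{2\pi i/k}$, and the remaining eigenvalues are bounded away from all $k$-th roots by hypothesis. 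This prevents period-multiplying bifurcations in the relevant neighborhood. You identify this as the main obstacle; what is missing is precisely this $\varepsilon$-versus-spectral-gap estimate, without which a perturbation of size comparable to the spectral gap could push an eigenvalue across a root of unity and create spurious period-$d$ orbits. Once this is pinned down, the rest of your argument goes through.
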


\subsubsection{Specific local model}\label{subsect:local model}
 We introduce  a local model for what a contact manifold looks like along a closed Reeb orbit, which is given in \cite{HM15}: 

\begin{lemma}[{\cite[Lemma 5.2]{HM15}}]\label{lem:HM15}
    Let $x$ be a prime  $T$-period closed Reeb orbit of a contact manifold $(Y,\alpha)$ of dimension $2n-1$. Then there exists a tubular neighborhood $U=S^1\times B^{2n-2}$ of $x(\mathbb{R})$, where $B^{2n-2}\subset\mathbb{R}^{2n-2}$ is a small open ball centered at the origin, with coordinates $(t,q_1,\dots,q_{n-1},p_1,\dots,p_{n-1})$, such that $x(\mathbb{R})=S^1\times\{0\}$, $\alpha\simeq H\mathrm{d}t+\lambda_{0}$, where $H:U\to\mathbb{R}$ satisfies $H_t(0)\equiv T$, $\mathrm{d}H_t(0)\equiv0$, and $\lambda_0=\frac{1}{2}\sum_1^{n-1}q_i\mathrm{d}p_i-p_i\mathrm{d}q_i$.
\end{lemma}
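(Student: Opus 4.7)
The plan is a two-step normal form: first construct an exponential-type tubular neighborhood of $x(\mathbb{R})$ that places the orbit on the zero section $S^1\times\{0\}$ and aligns the symplectic structure on $\xi$ along it with the standard one on $\mathbb{R}^{2n-2}$, then run a parametric Darboux-for-$1$-forms (Moser) argument on the transverse disks to straighten the fiber part of $\alpha$ to $\lambda_0$ exactly.

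First, I would parametrize the prime orbit as an embedding $x:S^1=\mathbb{R}/\mathbb{Z}\to Y$ by $x(t)=\phi_\alpha^{Tt}(x(0))$, so that $x'(t)=T R_\alpha(x(t))$. Along $x$, $(\xi|_x,\mathrm{d}\alpha|_\xi)$ is a rank-$(2n-2)$ symplectic vector bundle over $S^1$, trivializable since $\mathrm{Sp}(2n-2)$ is connected; pick a smooth symplectic frame $(e_1(t),\dots,e_{n-1}(t),f_1(t),\dots,f_{n-1}(t))$ of $\xi|_{x(t)}$ with $\mathrm{d}\alpha(e_i,f_j)=\delta_{ij}$ and $\mathrm{d}\alpha(e_i,e_j)=\mathrm{d}\alpha(f_i,f_j)=0$. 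Using any Riemannian metric on $Y$, set
\begin{equation*}
\Phi:S^1\times B^{2n-2}\longrightarrow Y,\qquad\Phi(t,q,p)=\exp_{x(t)}\Bigl(\textstyle\sum_i q_ie_i(t)+\sum_i p_if_i(t)\Bigr),
\end{equation*}
a diffeomorphism onto a neighborhood of $x(\mathbb{R})$ for $B^{2n-2}$ sufficiently small. Decompose $\Phi^*\alpha=H_1(t,q,p)\,\mathrm{d}t+\eta$ with $\eta(\partial_t)\equiv 0$; evaluating on the zero section yields $H_1(t,0)=T$ and $\eta|_{S^1\times\{0\}}=0$, while the symplectic-frame normalization forces the slice-wise restriction $\beta_t:=\iota_t^*\eta$ (a $1$-form on $B^{2n-2}$) to satisfy $\beta_t(0)=0$ and $\mathrm{d}\beta_t|_0=\mathrm{d}\lambda_0$.

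Second, I would run the parametric Moser trick on slices: interpolate $\beta_t^s=(1-s)\lambda_0+s\beta_t$ for $s\in[0,1]$ and solve the Moser equation $\mathcal{L}_{X_t^s}\beta_t^s=\lambda_0-\beta_t$ for a smooth family of vector fields $X_t^s$ on $B^{2n-2}$, using that $\mathrm{d}\beta_t^s$ is symplectic on a common ball uniform in $(t,s)\in S^1\times[0,1]$. Because $\beta_t-\lambda_0$ vanishes to first order at the origin, $X_t^s$ vanishes at $0$, and its time-one flow $\psi_t$ is defined on a uniform ball, fixes the origin, and satisfies $\psi_t^*\beta_t=\lambda_0$, with smooth $S^1$-periodic $t$-dependence inherited from the data. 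Setting $\Psi(t,q,p):=(t,\psi_t(q,p))$ yields
\begin{equation*}
\Psi^*\Phi^*\alpha=H(t,q,p)\,\mathrm{d}t+\lambda_0,
\end{equation*}
the new $\mathrm{d}t$-coefficient absorbing the $\partial_t\psi_t$-contribution from the $t$-dependence of $\psi_t$. The conditions on $H$ then follow automatically: $\psi_t(0)=0$ forces $\partial_t\psi_t|_0=0$, so $H(t,0)=H_1(t,0)=T$; and expanding the Reeb condition $\mathrm{d}\alpha(R_\alpha,\partial_{q_i})=\mathrm{d}\alpha(R_\alpha,\partial_{p_i})=0$ at zero-section points in the new coordinates yields $\mathrm{d}H_t(0)\equiv 0$, thanks to the $t$-independence of the coefficients of $\lambda_0$.

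The main technical obstacle is the parametric Moser step, where one must verify the uniform existence of $\psi_t$ on a ball independent of $t\in S^1$. This amounts to checking that $\mathrm{d}\beta_t^s$ remains non-degenerate on a common neighborhood of the origin uniformly in $(t,s)\in S^1\times[0,1]$ and that the flow of $X_t^s$ is defined up to time one there; both conclusions follow from the first-order agreement $\mathrm{d}\beta_t|_0=\mathrm{d}\lambda_0$ together with the compactness of $S^1\times[0,1]$.
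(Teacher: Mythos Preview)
The paper does not supply its own proof of this lemma; it is quoted verbatim from \cite[Lemma~5.2]{HM15} and only used as a black box. Your two–step strategy (trivialize $\xi$ along the orbit by a symplectic frame, then run a parametric Moser argument on the transverse disks) is indeed the standard route to such a normal form, and the verification of $H_t(0)\equiv T$ and $\mathrm{d}H_t(0)\equiv 0$ at the end is correct.

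There is, however, a genuine gap in the Moser step. You correctly write down the equation $\mathcal{L}_{X_t^s}\beta_t^s=\lambda_0-\beta_t$, but the justification ``using that $\mathrm{d}\beta_t^s$ is symplectic'' is not enough: nondegeneracy of $\mathrm{d}\beta_t^s$ lets you solve $\iota_{X_t^s}\mathrm{d}\beta_t^s=\lambda_0-\beta_t$, and with that choice one obtains only $\psi_t^*\beta_t=\lambda_0+\mathrm{d}F_t$ for some function $F_t$. The extra exact piece $\mathrm{d}_BF_t$ cannot be absorbed into the $H\,\mathrm{d}t$ term (it has no $\mathrm{d}t$-component), so the conclusion $\Psi^*\Phi^*\alpha=H\,\mathrm{d}t+\lambda_0$ does not follow. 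To actually solve $\mathcal{L}_{X_t^s}\beta_t^s=\lambda_0-\beta_t$ one must simultaneously control $\iota_{X_t^s}\mathrm{d}\beta_t^s$ and $\beta_t^s(X_t^s)$; a standard fix is to set $\iota_{X_t^s}\mathrm{d}\beta_t^s=(\lambda_0-\beta_t)-\mathrm{d}h_s$ and determine $h_s$ from the compatibility condition $\beta_t^s(X_t^s)=h_s$, which reduces to a first–order ODE along radial rays (using $\lambda_0=-\tfrac{1}{2}\iota_E\omega_0$ with $E$ the Euler field) and has a smooth solution vanishing to second order at the origin. Relatedly, your claim that $\beta_t-\lambda_0$ ``vanishes to first order'' at $0$ needs care: only the value and the \emph{antisymmetric} part of the $1$-jet (namely $\mathrm{d}(\beta_t-\lambda_0)|_0=0$) are forced by your normalizations, so $X_t^s(0)=0$ must be argued from the refined construction above rather than from first-order vanishing of $\beta_t-\lambda_0$.
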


 Such a specific local model allows us to give an explicit construction for {\it the associated Poincar\'{e} return map} of the Reeb flow near $x(0)$, with respect to a local cross section given by  the slice $\{0\}\times B^{2n-2}$:
 Firstly we look at the Reeb vector field on our local model. Note that $\alpha=H\mathrm{d}t+\lambda_0$ and that $\mathrm{d}\alpha=\mathrm{d}H\wedge\mathrm{d}t+\omega_0$, where $\omega_0=\mathrm{d}\lambda_0$ is the standard symplectic form on $B^{2n-2}$. Then one can  verify that the Reeb vector field of $\alpha$ is given by
 \begin{equation*}
     R=\frac{1}{F}\left(\frac{\partial}{\partial t}-X_H\right),
 \end{equation*}
 where $\iota_{X_H}\omega_0=-\mathrm{d}H$ and $F=H-\lambda_0(X_H)$ (which is nonzero at least in a small neighborhood of $x$). Then we see that $R$ has the same integral curve as the vector field $\frac{\partial}{\partial t}-X_H$ on $U=S^1\times B^{2n-2}$. This implies that 
 \begin{equation*}
  (t,\varphi_{-H}^t(z))=\phi_R^{\tau(t,z)}(0,z),~~\forall t\in(0,1],~ z\in B^{2n-2},
 \end{equation*}
 where $\varphi_{-H}^t$ denotes the Hamiltonian flow of $-H$ on $B^{2n-2}$, and $\tau(t,z)$ is defined by
 \begin{equation*}
     \tau(t,z)=\min\{s>0:\phi_R^s(0,z)\in\{t\}\times B^{2n-2}\subset U\}.
 \end{equation*}
We  conclude that the time-1 map $\varphi=\varphi_{-H}^1$ is exactly the associated Poincar\'e return map of $x(0)$,  following the Reeb flow from the slice $\{0\}\times B^{2n-2}$ back to itself.

It is shown in \cite{Fe20} that the local  symplectic homology of an isolated closed Reeb orbit can be computed by the local Floer homology of the associated return map, provided the closed Reeb orbit is prime. Note that  $0\in B^{2n-2}$ is a constant periodic orbit of the Hamiltonian flow of $-H$ since $\mathrm{d}H_t(0)\equiv0$.   For each isolated and prime closed Reeb orbit $x$,
\begin{equation}\label{eq:SH prime}
    \mathrm{SH}_*(x,\mathbb{F})\cong \mathrm{HF}_*(\varphi, 0;\mathbb{F})\oplus\mathrm{HF}_{*-1}(\varphi,0;\mathbb{F}),
\end{equation}
see \cite[Theorem 1.1]{Fe20}.

\begin{remark}\label{rmk:same ind}
The tubular neighborhood $U = S^1 \times B^{2n-2}$ in Lemma \ref{lem:HM15} corresponds to a symplectic trivialization of the contact structure $\xi = \ker\alpha$. (Note that $\xi$ can be identified with the normal bundle of $x$ in $Y$.) Under this trivialization, the Reeb flow along $x$ can be linearized and restricted to $\xi$, yielding a symplectic path used to define the Maslov-type indices of $x$; see Subsection  \ref{subsect:index for orbits}.

On the other hand, consider the Hamiltonian flow $\varphi_{-H}^t$ on the extended phase space $S^1 \times B^{2n-2}$, viewing $\varphi_{-H}^t$ as a map $\{0\} \times B^{2n-2} \to \{t\} \times B^{2n-2}$. Since $0$ is a constant orbit of $\varphi_{-H}$ in $B^{2n-2}$, we can linearize $\varphi_{-H}^t$ along $(t,0)$ and restrict it to $B^{2n-2}$, obtaining a symplectic path that defines the Maslov-type indices of $0$. As the Reeb vector field $R$ shares integral curves with the vector field $\frac{\partial}{\partial t} - X_H$, these two symplectic paths—associated with $x$ and $0$ respectively—coincide up to reparameterization and consequently have identical Maslov-type indices.
\end{remark}
 For the k-th iteration $x^k$, consider the $k$-fold covering 
\[\pi:U_k=S^1\times B^{2n-2}\to U,~~~(e^{\sqrt{-1}\theta},z)\mapsto(e^{\sqrt{-1}k\theta},z),\] 
and the pull-back of the contact form from $U$ to $U_k$. This pullback and the Reeb vector field are invariant under the action of $\mathbb{Z}_k$ on $U_k$ given by deck transformations with generator $(e^{\sqrt{-1}\theta},z)\mapsto(e^{\sqrt{-1}(\theta+\frac{2\pi}{k})},z)$. Provided $x^k$ is isolated, then $x^k$ can be lifted to an isolated closed Reeb orbit on $U_k$, with up to $k$ distinct choices of initial values (which correspond to the same closed Reeb orbit under different initial conditions), denoted by $kx$.  The associated return map of $kx$ is $\varphi^k$. More precisely, assume $\mathcal{A}(x)=T$ and the minimal period of $x$ is $T/l$ for some $l\in\mathbb{N}$, we have
\begin{equation*}
  x^k:[0,kT]\to U=S^1\times B^{2n-2},~~t\mapsto(e^{\frac{2\pi\sqrt{-1} lt}{T}},0).
\end{equation*}
And the  lift $kx$ is given by 
\begin{equation}\label{eq:kx}
  kx:[0,kT]\to U_k=S^1\times B^{2n-2},~~t\mapsto(e^{\frac{2\pi\sqrt{-1} (lt+mT)}{kT}},0),
\end{equation}
for some $m\in\{0,\dots,k-1\}$.  

\begin{lemma}[{\cite[Lemma 2.9]{CGG24}}]
There is a natural $\mathbb{Z}_k$-action on $\mathrm{SH}(kx;\mathbb{F})$, and 
\begin{equation}\label{eq:SH kx}
    \mathrm{SH}_*(x^k;\mathbb{Q})=\mathrm{SH}_*(kx;\mathbb{Q})^{\mathbb{Z}_k},
\end{equation}
where the superscript refers to the $\mathbb{Z}_k$-invariant part of the homology.
\end{lemma}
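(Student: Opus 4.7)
The plan is to realize the natural $\mathbb{Z}_k$-action at the chain level via the covering $\pi:U_k\to U$, identify $C(x^k;\mathbb{Q})$ with the $\mathbb{Z}_k$-invariants of $C(kx;\mathbb{Q})$, and then pass to homology using averaging over $\mathbb{Q}$.

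For the first claim, since $\pi$ is a $\mathbb{Z}_k$-principal cover with $\pi^*\alpha$ tautologically $\mathbb{Z}_k$-invariant, any choice of small perturbation of $\alpha$ near $x^k(\mathbb{R})$, admissible almost complex structure on the symplectization of $U$, and Morse function on the $S^1$-family of perturbed $1$-periodic orbits pulls back via $\pi$ to $\mathbb{Z}_k$-equivariant Morse--Bott data upstairs. The generators of the resulting local chain complex $C(kx;\mathbb{F})$ and the Morse--Bott trajectories defining its differential are then permuted by the deck group, yielding a chain-level $\mathbb{Z}_k$-action whose induced action on $\mathrm{SH}_*(kx;\mathbb{F})$ is independent of choices by the usual equivariant continuation argument, cf.~\cite{BO09mb,BO13gysin}.

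For the chain-level comparison, the key geometric input is that the $S^1$-families of $1$-periodic orbits fit into a free $\mathbb{Z}_k$-cover $S^1_{h,kx}\to S^1_{h,x^k}$: indeed $x^k$ has winding $lk$ around $S^1\times\{0\}\subset U$ (divisible by $k$) and $kx$ has minimal period $kT/l$ while $x^k$ has minimal period $T/l$, so each reparameterization of $x^k$ admits precisely $k$ lifts as reparameterizations of $kx$, cyclically permuted by deck transformations acting as shift by $T/l$. Choosing the upstairs Morse function as the pullback of one with two critical points downstairs produces a $\mathbb{Z}_k$-invariant Morse function with $2k$ critical points upstairs, split into two free $\mathbb{Z}_k$-orbits projecting bijectively onto the two downstairs critical points. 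Since every Morse--Bott trajectory on $U$ admits exactly $k$ lifts on $U_k$ cyclically permuted by $\mathbb{Z}_k$, and any orbit or trajectory on $U_k$ near $kx(\mathbb{R})$ necessarily projects to one on $U$ near $x^k(\mathbb{R})$, a direct equivariant counting argument shows that the averaging map $a\mapsto\tfrac{1}{k}\sum_{g\in\mathbb{Z}_k}g\cdot\tilde{a}$ induces a chain isomorphism $C(x^k;\mathbb{Q})\xrightarrow{\sim} C(kx;\mathbb{Q})^{\mathbb{Z}_k}$.

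Finally, over $\mathbb{Q}$ the averaging operator $e=\tfrac{1}{k}\sum_{g\in\mathbb{Z}_k}g$ is a chain-level idempotent projecting onto the invariant subcomplex, so taking $\mathbb{Z}_k$-invariants commutes with taking homology, giving $H_*(C(kx;\mathbb{Q})^{\mathbb{Z}_k})=\mathrm{SH}_*(kx;\mathbb{Q})^{\mathbb{Z}_k}$; combined with the above chain isomorphism this yields the claim. The principal obstacle I anticipate is the chain-level comparison in the second step, which requires establishing equivariant Morse--Bott regularity on $U$ and $U_k$ simultaneously and verifying the exhaustiveness of the lift-project correspondence for both orbits and trajectories; these are handled by standard equivariant transversality arguments, applicable here because $\mathbb{Z}_k$ acts freely on the relevant $S^1$-families and because orbits of $\pi^*\tilde{\alpha}$ near $kx(\mathbb{R})$ of period close to $kT$ are forced to descend to orbits near $x^k(\mathbb{R})$ of period close to $kT$ on $U$.
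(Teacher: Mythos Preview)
The paper does not give its own proof of this lemma; it is quoted verbatim from \cite[Lemma 2.9]{CGG24} and used as a black box. So there is nothing to compare against here.

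Your sketch is along the correct lines and captures the essential mechanism: the deck $\mathbb{Z}_k$-action on $U_k$ induces a chain-level action once the Morse--Bott data are chosen by pullback from $U$, the lift/project correspondence for orbits and trajectories gives the chain isomorphism $C(x^k;\mathbb{Q})\cong C(kx;\mathbb{Q})^{\mathbb{Z}_k}$, and averaging over $\mathbb{Q}$ lets invariants commute with homology. One point worth being more careful about is your claim that the downstairs Morse function on $S^1_{h,x^k}$ can be taken with exactly two critical points and then pulled back: in the paper's setup $x$ need not be prime (its minimal period is $T/l$), so the relevant $S^1$-family downstairs is already a quotient circle and the counting of lifts and the freeness of the $\mathbb{Z}_k$-action on generators must be checked against that. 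You implicitly handle this by tracking the winding number $lk$, but it would be cleaner to phrase the argument directly in terms of the free $\mathbb{Z}_k$-action on the circle of initial conditions for $kx$, which is what actually guarantees that generators upstairs form free $\mathbb{Z}_k$-orbits. Apart from this bookkeeping, the argument is sound.
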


According to \eqref{eq:kx}, $kx$ is prime whenever $x$ is prime. Then combining \eqref{eq:SH prime} and \eqref{eq:SH kx} we get
\begin{equation}\label{eq:SH leq HF}
 \dim\mathrm{SH}_*(x^k; \mathbb{Q})\leq \dim \mathrm{HF}_*(\varphi^k, 0; \mathbb{Q})+\dim\mathrm{HF}_{*-1}(\varphi^k, 0; \mathbb{Q}).
\end{equation}

\subsection{Generating functions}\label{subsec:GF}

In this subsection we recall the definition of the generating function on $\mathbb{R}^{2n}$, which will be used  to construct the reference function for applying Lemma \ref{lem:KF}  in our proof of Theorem \ref{thm:local max}. We closely follow the description in \cite[Section 6.1]{Gi10} and the references therein. 

Consider the symplectic vector space $\mathbb{R}^{2n}\times \bar{\mathbb{R}}^{2n}:=(\mathbb{R}^{2n}\oplus \mathbb{R}^{2n},\omega_{\rm std}\oplus-\omega_{\rm std})$, and identify $\mathbb{R}^{2n}$ with the Lagrangian diagonal $\triangle\subset\mathbb{R}^{2n}\times \bar{\mathbb{R}}^{2n}$.  Fix a Lagrangian complement $N$ to $\triangle$. Then  $\mathbb{R}^{2n}\times \bar{\mathbb{R}}^{2n}$ can be treated as $T^*\triangle$ via a symplectomorphisms sending $\triangle$ to the zero section and identifying $N$ with each cotangent fiber by a canonical isomorphism $\eta\mapsto (\omega_{\rm std}\oplus-\omega_{\rm std})(\eta,\cdot)$. (This identification is  a special case of  Weinstein's Lagrangian Neighborhood Theorem; see \cite{Wei71,MS17} for  details.)

Let $\varphi$ be a Hamiltonian diffeomorphism defined on a neighborhood of the origin $p$ in $\mathbb{R}^{2n}$ such that $\Vert\varphi-\mathrm{id}\Vert_{C^1}$ is sufficiently small. Then the graph $\Gamma$ of $\varphi$ is $C^1$-close to $\triangle$. Consequently, $\Gamma$ is transverse to every fiber and can be viewed as the graph in $T^*\triangle$ of an exact 1-form $\mathrm{d}F$ near $p\in\triangle=\mathbb{R}^{2n}$. The function $F$, normalized by $F(p)=0$, is called the {\it generating function} of $\varphi$ and  possesses the following properties:
\begin{itemize}
    \item $p$ is an isolated critical point of $F$ if and only if $p$ is an isolated fixed point of $\varphi$,
    \item $\Vert F\Vert_{C^2}=O(\Vert\varphi-\mathrm{id}\Vert_{C^1})$ and $\Vert\mathrm{d}^2F_p\Vert=\Vert\mathrm{d}\varphi_p-\mathrm{Id}\Vert$.
\end{itemize}

In general the function $F$ depends on the choice of the Lagrangian complement $N$ to $\triangle$. In this paper we  take as $N$ the specific linear subspace
\[
N_0:=\{(x,0,0,y)\}\subset\mathbb{R}^{2n}\times \bar{\mathbb{R}}^{2n},
\]
where $x=(x_1,\dots,x_n)$ and $y=(y_1,\dots,y_n)$ are the standard canonical coordinates on $\mathbb{R}^{2n}$, i.e., $\omega_{\rm std}=\sum\mathrm{d}y_i\wedge\mathrm{d}x_i$.  Then the   symplectomorphisms between  $\mathbb{R}^{2n}\times \bar{\mathbb{R}}^{2n}$ and $T^*\triangle$ can be explicitly expressed as $\Psi_{N_0}:\mathbb{R}^{2n}\times \bar{\mathbb{R}}^{2n}\to T^*\triangle$,
\begin{equation*}
   \Psi_{N_0}(x,y,x',y'):=(x',y ,y-y',x'-x)=(q,q',p,p'), 
\end{equation*}
where $(q,q',p,p')$ are the canonical coordinates in $T^*\triangle=T^*\mathbb{R}^{2n}$ such that the symplectic form $\omega_{\rm can}=\sum (\mathrm{d}q_i\wedge\mathrm{d}p_i+\mathrm{d}q'_i\wedge\mathrm{d}p'_i).$ 
Then  $F$ is determined by the equation
\begin{equation}\label{eq:graph F}
\Psi_{N_0}((x,y),\varphi(x,y))=((q,q'),\mathrm{d}F(q,q')),   
\end{equation}
where the left-hand side represents the graph of $\varphi$ mapped by the symplectomorphism $\Psi_{N_0}$, and the right-hand side is the graph of a 1-form in $T^*\mathbb{R}^{2n}$.

Set $z=(x,y)$, then \eqref{eq:graph F} can be rewritten as
\begin{equation*}
    \varphi(z)-z=X_F(\psi(z)),
\end{equation*}
where  $X_F$ is the Hamiltonian vector field of $F$ and 
$\psi(z):=(\text{$x$-component of $\varphi(z)$},y)$.

\section{Maslov-type indices: definition and iteration properties}\label{sec:index}

\subsection{Maslov-type indices and dynamically convexity}\label{subsec:index}
Let $\mathrm{Sp}(2m)$ denote the group of symplectic linear transformations on $\mathbb{R}^{2m}$ (equipped with the standard symplectic form). In this section we recall the definition and some basic properties of the various Maslov-type indices for sympletcic paths, i.e. paths in  $\mathrm{Sp}(2m)$.
The index theory was introduced by Conley-Zehnder in \cite{CZ84} for the non-degenerate case with $m\ge 2$,
Long-Zehnder in \cite{LZ90} for the non-degenerate case with $m=1$,
and Long in \cite{Long90} and Viterbo in \cite{Viterbo90} independently for
the degenerate case. For a more thorough treatment, we refer the reader to \cite{Long02}, \cite[Section 3]{SZ92} or \cite[Section 4]{GG20}.

\subsubsection{The mean index}
Let $\widetilde{\rm Sp}(2m)$ denote the universal covering of $\mathrm{Sp}(2m)$.  An element $\Phi\in\widetilde{\rm Sp}(2m)$ can be viewed as a path $\Phi(t):[0,1]\to\mathrm{Sp}(2m)$ with $\Phi(0)=\mathrm{Id}$, up to homotopy with fixed end-points. 

\begin{remark}\label{rmk:multiplicate to concatenation}
   For $\Phi$ and $\Psi\in\widetilde{\rm Sp}(2m)$, consider the matrix product path $\Psi(t)\Phi(t):[0,1]\to \mathrm{Sp}(2m)$. One can construct a homotopy with fixed end-points, such that 
   \begin{equation}\label{eq:multiplicate to concatenation}
    \Psi(t)\Phi(t)\sim \Phi(t)*\Psi(t)\Phi(1):=\left\{\begin{aligned}
         &\Phi(2t), & 0\le t\le \frac{1}{2}, \cr
            &\Psi(2t-1)\Phi(1), & \frac{1}{2}\le t\le 1. 
    \end{aligned}\right.
   \end{equation}
   More precisely, the homotopy is given by $\Gamma:[0,1]\times[0,1]\to\mathrm{Sp}(2m)$,
   \begin{equation*}
\Gamma(s,t):=\left\{
\begin{aligned}
    &\Phi\left(\frac{2t}{2-s}\right), &0\leq t\leq\frac{s}{2},\\
    &\Psi\left(\frac{2t-s}{2-s}\right)\Phi\left(\frac{2t}{2-s}\right), &\frac{s}{2}\leq t \leq \frac{2-s}{2},\\
    &\Psi\left(\frac{2t-s}{2-s} \right)\Phi(1), &\frac{2-s}{2}\leq t\leq 1.
\end{aligned}
\right.
\end{equation*}   
   Therefore, the multiplication in $\widetilde{\rm Sp}(2m)$ can be viewed as the concatenation of paths. From this opinion, for each $k\in\mathbb{N}$, $\Phi^k\in\widetilde{\rm Sp}(2m)$ is identified with  the $k$-th iteration of symplectic path $\Phi(t)$ defined in, for example, \cite[Page 322]{LZ02}.
\end{remark}

For each $\Phi\in\widetilde{\rm Sp}(2m)$,  one can define its {\it  mean index} $\hat\mu(\Phi)$, which measures the total rotation angle of certain unit eigenvalues of $\Phi(t)$, as follows. Following \cite{SZ92}, one can construct a continuous function $\rho:\mathrm{Sp}(2m)\to S^1$, uniquely characterized by the following properties:
\begin{itemize}
    \item (Naturality) If $A$ and $B\in\mathrm{Sp}(2m)$, then $\rho(BAB^{-1})=\rho(A)$.  
    \item (Product) If $A\in\mathrm{Sp}(2m_1)$ and $B\in\mathrm{Sp}(2m_2)$ with $m_1+m_2=m$, then \[\rho(A\diamond B)=\rho(A)\rho(B).\]
    \item (Determinant) If $A\in \mathrm{U}(m)=\mathrm{Sp}(2m)\cap\mathrm{O}(2m)$, then 
    \[
     \rho(A)=\mathrm{det}_{\mathbb{C}}(X+\sqrt{-1}Y), \text{~where $A=\begin{pmatrix}
         X&-Y\\
         Y& X
     \end{pmatrix}$,  $X$, $Y\in\mathbb{R}^{m\times m}$.}
    \]
  
    In particular, $\rho$ induces an isomorphism $\rho_*:\pi_1(\mathrm{Sp}(2m))\to\pi_1(S^1)\simeq\mathbb{Z}$.
    \item (Normalization) If $\sigma(A)\cap S^1\subset\mathbb{R}$, then \[\rho(A)=(-1)^{m_0/2},\] where $m_0$ is the total  multiplicity of the negative real eigenvalues.
\end{itemize}
Consider the path $\rho(\Phi(t)):[0,1]\to S^1$ and lift it to a path $\theta:\mathbb[0,1]\to\mathbb{R}$ such that $\rho(\Phi(t))=\exp(\sqrt{-1}\theta(t))$. We set 
\[
\hat\mu(\Phi):=\frac{\theta(1)-\theta(0)}{\pi}.
\]
The properties of $\rho$  imply the following properties of mean index:
\begin{itemize}
    \item[\hypertarget{list:MI1}{\text{(MI1)}}] (Additivity) $\hat\mu(\Phi\diamond\Psi)=\hat\mu(\Phi)+\hat\mu(\Psi)$, where $\Phi\in\widetilde{\rm Sp}(2m_1)$ and $\Psi\in\widetilde{\rm Sp}(2m_2)$.   
    \item[\hypertarget{list:MI2}{\text{(MI2)}}]  (Maslov index) $\hat\mu$ restricts to an isomorphism $\pi_1(\mathrm{Sp}(2m))\to 2\mathbb{Z}$. Indeed, \[\hat\mu(\Phi)=2\mu_{\rm Maslov}(\Phi)\] provided $\Phi$ is a loop. Moreover, for each $\Psi\in\widetilde{\rm Sp}(2m)$, if $\psi$ is a loop in $\mathrm{Sp}(2m)$ with $\psi(0)=\Psi(1)$, then we have
    \[
    \hat\mu(\Psi*\psi)=\hat\mu(\Psi)+2\mu_{\rm Maslov}(\psi),
    \]
    where $*$ denotes the  concatenation of paths.
     \item[\hypertarget{list:MI3}{\text{(MI3)}}] (Integer case) $\hat\mu(\Phi)\in\mathbb{Z}$ provided $\sigma(\Phi(1))\cap S^1\subset\mathbb{R}$. In particular, $\hat\mu(\Phi)\in 2\mathbb{Z}$ provided $\sigma(\Phi(1))=\{1\}.$
     \item[\hypertarget{list:MI4}{\text{(MI4)}}] (Homogeneity) $\hat\mu(\Phi^k)=k\hat\mu(\Phi)$, for each $k\in\mathbb{N}$. (cf. \cite[Lemma 3.4]{SZ92}.)
\end{itemize} 

Furthermore, by the explicit construction of $\rho$ in \cite[Page 1316]{SZ92} (see also \cite[Page 60]{Long02}), $\rho(A)$ is totally determined by $\sigma(A)$ and the Krein type of eigenvalues in $\sigma(A)\cap S^1$, which, as mentioned in \cite[Section 4.1.1]{GG20}, implies that: 
\begin{itemize}
    \item[\hypertarget{list:MI5}{\text{(MI5)}}]$\hat\mu(\Phi_s)\equiv const$ for a family of elements $\Phi_s$ in $\widetilde{\rm Sp}(2m)$ as long as $\sigma(\Phi_s(1))\equiv const$ for all $s$.
    \item[\hypertarget{list:MI6}{\text{(MI6)}}] $\hat \mu(\Phi)=0$, provided $\sigma(\Phi(t))\equiv\{1\}$ for $t\in[0,1]$.
\end{itemize}

\begin{remark}
    In fact, $\hat\mu:\widetilde{\rm Sp}(2m)\to\mathbb{R}$ is a quasimorphism between Lie groups, that is,
    \[
    |\hat\mu(\Phi\Psi)-\hat\mu(\Phi)-\hat\mu(\Psi)|<const,
    \]
  where the constant is independent of $\Phi$ and $\Psi$.  And one can prove that $\hat\mu$ is the unique quasimorphism $\widetilde{\rm Sp}(2m)\to\mathbb{R}$ which is continuous, homogeneous (see \hyperlink{list:MI4}{(MI4)}), and normalized by 
  \[
  \hat\mu(\Phi_0)=2~ \text{~for $\Phi_0(t)=\exp(2\pi\sqrt{-1}t)\diamond \mathrm{Id_{2m-2}};$}
  \]
  see \cite{BG92}.
  Although $\hat\mu$ fails to be a homomorphism in general, according to \eqref{eq:multiplicate to concatenation}, we still have
  \begin{equation}\label{eq:homo loop}
  \hat\mu(\Psi\Phi)=\hat\mu(\Psi)+\hat\mu(\Phi), ~~\text{provided $\Psi\in\widetilde{\rm Sp}(2m)$ is a loop.}
  \end{equation}
\end{remark}

\subsubsection{Maslov-type indices}

Recall that $\Phi\in\widetilde{\rm Sp}(2m)$ is called {\it non-degenerate} if $1\notin\sigma(\Phi(1))$;  {\it totally degenerate} if $\sigma(\Phi(1))=\{1\}$;   {\it strongly non-degenerate} if $\Phi^k$ is non-degenerate for each $k\in\mathbb{N}$, i.e., none of the eigenvalues of $\Phi(1)$ is a root of unity. Let $\mathrm{Sp}(2m)^*$ denote the set of $A\in\mathrm{Sp}(2m)$ such that $1\notin\sigma(A)$, and $\widetilde{\rm Sp}(2m)^*$ denote the set of $\Phi\in\widetilde{\rm Sp}(2m)$ such that $\Phi$ is non-degenerate. 
For each $\Phi\in\widetilde{\rm Sp}(2m)$, one can connect $\Phi(1)$ to some symplectic transformations $M$  with $\rho(M)\in\{\pm1\}$ by a path lying entirely  in $\mathrm{Sp(2m)^*}$. Concatenating this path with $\Phi$ to get a new path $\Phi'\in\widetilde{\rm Sp}(2m)$ such that $\hat\mu(\Phi')\in\mathbb{Z}$. Then the {\it Conley-Zehnder index} of $\Phi$ is defined by $\mu(\Phi):=\hat\mu(\Phi')$. One can show that $\mu(\Phi)$ is well-defined and $\mu:\widetilde{\rm Sp}(2m)^*\to\mathbb{Z}$ is constant on connected components of $\widetilde{\rm Sp}(2m)^*$, namely, it is locally constant. (cf. \cite[Theorem 3.3]{SZ92}) Also we have
\begin{equation}\label{eq:parity}
    (-1)^{\mu(\Phi)-m}=\mathrm{sign} \det(\mathrm{Id}-\Phi(1)),
\end{equation}
which implies that the parity of $\mu(\Phi)$ is determined by  the number of eigenvalues of $\Phi(1)$ in $(-1,0)$, counted with algebraic multiplicity.
And we list here a specific example that for the path $\Phi(t)=\exp(2\pi\sqrt{-1}\lambda t)$, $t\in[0,1]$, in $\mathrm{Sp}(2)$ we have
\begin{equation*}
    \mu(\Phi)=\mathrm{sign(\lambda)}(2\lfloor |\lambda| \rfloor+1)~~  \text{when}~\lambda\notin\mathbb{Z}.
\end{equation*}

In general, for a not necessarily non-degenerate $\Phi\in\widetilde{\rm Sp}(2m)$, one can define its {\it upper and lower Conley-Zehnder indices} as
\begin{equation*}
    \mu_+(\Phi):=\limsup_{\Psi\to\Phi}\mu(\Psi)~~\text{ and }~~\mu_-:=\liminf_{\Psi\to\Phi}\mu(\Psi),
\end{equation*}
where in both cases the limit is taken over $\Psi\in\widetilde{\rm Sp}(2m)^*$ converging to $\Phi$, see \cite[Pages 63-64]{Long97}. The indices $\mu_{\pm}$ are the upper semi-continuous and lower semi-continuous extensions of $\mu$ from $\widetilde{\rm Sp}(2m)^*$ to $\widetilde{\rm Sp}(2m)$ respectively. Clearly, $\mu_{\pm}(\Phi)=\mu(\Phi)$ provided $\Phi$ is non-degenerate. Moreover,  we have
\begin{equation}\label{eq:+-nullity}
    \mu_+(\Phi)-\mu_-(\Phi)=\nu(\Phi):=\dim\ker(\Phi(1)-\mathrm{Id}),~ \forall\Phi\in\widetilde{\rm Sp}(2m),
\end{equation}
see \cite[Page 142]{Long02}. 

The indices $\mu_{\pm}$ and $\mu$ possess the same additivity as $\hat\mu$ in \hyperlink{list:MI1}{(MI1)}, see \cite[Theorem 1.4]{Long97} or \cite[Lemma 4.3]{GG20}. For each $\Phi\in\widetilde{\rm Sp}(2m)$, the mean index $\hat\mu$ and the indices $\mu_{\pm}$ are related by the inequalities
\begin{equation}\label{eq:bounded by mean}
    \hat\mu(\Phi)-m\leq \mu_-(\Phi)\leq\mu_+(\Phi)\le\hat\mu(\Phi)+m.
\end{equation}
(cf. \cite[Page 213]{Long02}.) Consequently, 
\begin{equation}\label{eq:mean ind}
    \lim_{k\to\infty}\frac{\mu_{\pm}(\Phi^k)}{k}=\hat\mu(\Phi).
\end{equation}
Furthermore, suppose $\psi$ is a loop in $\mathrm{Sp}(2m)$  with $\psi(0)=\Phi(1)$, then one can prove that
\begin{equation}\label{eq:differ by maslov}
    \mu_{\pm}(\Phi*\psi)-\mu_{\pm}(\Phi)=\hat\mu(\Phi*\psi)-\hat\mu(\Phi)=2\mu_{\rm Maslov}(\psi),
\end{equation}
where the second equality is from \hyperlink{list:MI2}{(MI2)}. (Indeed, the first equality can be derived from Corollary \ref{coro:precise mean ind} of Section \ref{subsec:ind iterate}.)

\subsubsection{Dynamical convexity}

As mentioned before, the notion of dynamical convexity, originally introduced in \cite{HWZ98} for the Reeb flow on the standard contact sphere $S^3$, is a condition arguing some lower bound of the indices $\mu_-$ for closed Reeb orbits. In this section, essentially focus on the symplectic path aspect, we adopt the following definition.

\begin{definition}
    A path $\Phi\in\widetilde{\rm Sp}(2m)$ is said to be {\it dynamically convex} if $\mu_-(\Phi)\geq m+2$.
\end{definition}

The following lemma, as a consequence of the abstract precise iteration formula \eqref{eq:precise} we will introduce in Section \ref{subsec:ind iterate}, asserts that  the indices $\mu_-(\Phi^k)$ are increasing provided $\Phi$ is dynamically convex.
\begin{lemma}[{cf. \cite[Corollary 3.1]{LZ02} or \cite[Lemma 4.8]{GG20}}]\label{lem:ind mono}
For each $\Phi\in\widetilde{\rm Sp}(2m)$, we have 
\begin{equation*}
    \mu_-(\Phi^{k+1})\geq \mu_-(\Phi^k)+(\mu_-(\Phi)-m)
\end{equation*}
for all $k\in\mathbb{N}$. In particular,
\[
\mu_-(\Phi^k)\geq(\mu_-(\Phi)-m)k+m.
\]
Assume furthermore, that $\Phi$ is dynamically convex. Then the function $\mu_-(\Phi^k)$ of $k\in\mathbb{N}$ is strictly increasing,
\[
\mu_-(\Phi^k)\geq 2k+m,
\]
and $\hat\mu(\Phi)\geq\mu_-(\Phi)-m\geq 2$. Thus $\mu_-(\Phi^k)\geq m+2$ and all the iterations $\Phi^k$ are also dynamically convex.
\end{lemma}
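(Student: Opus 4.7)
\medskip

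The plan is to establish the superadditive inequality $\mu_-(\Phi^{k+1}) \geq \mu_-(\Phi^k) + (\mu_-(\Phi) - m)$ first, and then derive every remaining assertion as a direct corollary. For the superadditive inequality, the natural tool is the abstract precise iteration formula promised in Section \ref{subsec:ind iterate}. Following the strategy of \cite{LZ02, Long02}, one brings $\Phi(1)$ into its Long-normal form as a symplectic $\diamond$-sum of basic blocks (hyperbolic, elliptic-rotation, and (degenerate) parabolic/Jordan blocks). By additivity of $\mu_-$ and $\hat\mu$ under $\diamond$ (property \hyperlink{list:MI1}{(MI1)} and its counterpart for $\mu_-$), it suffices to verify the superadditive inequality separately on each basic block; once this reduction is made, the precise iteration formula yields a completely explicit expression for $\mu_-(\Phi^k)$ on each block (for instance, $\mu_-(\Phi^k) = 2\lfloor k\lambda \rfloor + 1$ for an irrational elliptic block of rotation angle $2\pi\lambda$), and the inequality reduces to elementary estimates such as $\lfloor (k{+}1)\lambda\rfloor - \lfloor k\lambda\rfloor \geq \lfloor \lambda\rfloor$. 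The degenerate cases, where $\mu_+ - \mu_-$ can jump with iteration, are the subtlest, but each one is handled by the same explicit tabulation given by the precise formula.

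Granted the superadditive inequality, the iterated bound follows by a one-line induction starting from $\mu_-(\Phi^1) = \mu_-(\Phi)$:
\[
  \mu_-(\Phi^k) \;\geq\; \mu_-(\Phi) + (k{-}1)(\mu_-(\Phi)-m) \;=\; k(\mu_-(\Phi)-m) + m.
\]
Dividing by $k$ and passing to the limit, the mean-index identity \eqref{eq:mean ind} yields
\[
  \hat\mu(\Phi) \;=\; \lim_{k\to\infty} \frac{\mu_-(\Phi^k)}{k} \;\geq\; \mu_-(\Phi)-m,
\]
so under the dynamical convexity assumption $\mu_-(\Phi) \geq m+2$ we immediately obtain $\hat\mu(\Phi) \geq 2$ and $\mu_-(\Phi^k) \geq 2k + m$. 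The latter gives $\mu_-(\Phi^k) \geq m+2$ for every $k \in \mathbb{N}$, so each iterate $\Phi^k$ is again dynamically convex; strict monotonicity is automatic since every increment $\mu_-(\Phi^{k+1}) - \mu_-(\Phi^k)$ is at least $\mu_-(\Phi)-m \geq 2 > 0$.

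The principal difficulty is the superadditive inequality itself, and in particular the behavior of $\mu_-$ (rather than $\mu_+$) on degenerate blocks. Because $\mu_-$ is the \emph{lower} semicontinuous extension of the Conley--Zehnder index, potential drops at iterations where new unit eigenvalues of $\Phi(1)$ coalesce at $1$ must be controlled, and this control is exactly what the precise iteration formula encodes through its splitting-number contributions. Once the normal-form reduction is performed and the elementary floor-function inequalities are verified block-by-block, the rest of the lemma is purely formal.
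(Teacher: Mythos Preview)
Your proposal is correct and aligns with what the paper indicates: the paper does not give a detailed proof of this lemma but simply cites \cite{LZ02,GG20} and notes that it is ``a consequence of the abstract precise iteration formula \eqref{eq:precise}'', which is exactly the tool you invoke. Your derivation of the corollaries (the iterated lower bound by induction, the mean-index inequality via \eqref{eq:mean ind}, strict monotonicity, and dynamical convexity of iterates) is clean and correct.

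One expository point is worth tightening in your block-by-block reduction. Additivity of $\mu_-$ under $\diamond$ applies to \emph{paths} that are $\diamond$-sums, but an arbitrary $\Phi\in\widetilde{\rm Sp}(2m)$ need not decompose as such a sum even when its endpoint $\Phi(1)$ does. What saves you is that, by the precise formula \eqref{eq:precise}, the quantity
\[
\mu_-(\Phi^{k+1})-\mu_-(\Phi^k)-\mu_-(\Phi)
\;=\;S^+_M(1)-C(M)+2\sum_{\theta\in(0,2\pi)}\Bigl(\Bigl\lceil\tfrac{(k+1)\theta}{2\pi}\Bigr\rceil-\Bigl\lceil\tfrac{k\theta}{2\pi}\Bigr\rceil\Bigr)S^-_M(e^{\sqrt{-1}\theta})
\]
depends only on $M=\Phi(1)$, and this expression is additive under $\diamond$ by \hyperlink{list:S2}{(S2)}. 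So one may replace $\Phi$ by any path ending in $\Omega^0(M)$, in particular a $\diamond$-sum of basic normal forms, and your reduction goes through. In fact, once you have written down this display, a direct argument is available without any blockwise case analysis: the ceiling differences and $S^-_M$ are nonnegative by \hyperlink{list:S4}{(S4)}, so it suffices to check $m+S^+_M(1)-C(M)\geq 0$, which follows from the normal-form tabulation in Lemma~\ref{lem:splt normal}. Either route is fine.
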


\subsubsection{Maslov-type indices for closed orbits}\label{subsect:index for orbits}
Now we briefly recall how  to translate the definition of the Maslov-type indices introduced above for elements of $\widetilde{\rm Sp}(2m)$ to closed orbits of Hamiltonian flow or Reeb flow. 

We begin with the case of Hamiltonian flow. Consider a symplectic manifold $(W^{2n},\omega)$. Following \cite[Section 5]{SZ92}, let $x$ be a contractible 1-periodic orbit of a Hamiltonian $H: S^1 \times W^{2n} \to \mathbb{R}$. Choose a {\it capping disk} for $x$, i.e., a map $v: D^2 \to W^{2n}$ satisfying $v(e^{2\pi i t}) = x(t)$ for $t \in [0,1]$, where $D^2$ denotes the unit disk in $\mathbb{R}^2 = \mathbb{C}$. Fix a symplectic trivialization of the pullback bundle $v^*TW$:
\[
\psi_v: (v^*TW, \omega) \to D^2 \times (\mathbb{C}^n, \omega_{\mathrm{std}}).
\]
Linearizing the Hamiltonian flow along $x$ with respect to $\psi_v$ gives rise to a symplectic path
\[
\Phi_v(t) = \psi_v(x(t)) \circ \mathrm{d}\varphi_H^t(x(0)) \circ \psi_v^{-1}(x(0)), \quad t \in [0,1],
\]
with $\Phi_v(0) = \mathrm{Id}$, so $\Phi_v \in \widetilde{\mathrm{Sp}}(2n)$. For a fixed capping disk $v$, any two such trivializations yield the same symplectic path up to homotopy with fixed endpoints. We then define
\[
\mu_{\pm}(x; v) := \mu_{\pm}(\Phi_v) \quad \text{and} \quad \hat{\mu}(x; v) := \hat{\mu}(\Phi_v).
\]
If $x$ is non-degenerate, set $\mu(x; v) := \mu(\Phi_v)$. Moreover, assuming $c_1(TW) = 0 \in H^2(W; \mathbb{Z})$, any two capping disks for $x$ produce the same homotopy class $\Phi \in \widetilde{\mathrm{Sp}}(2n)$ (see \cite[Lemma 5.2]{SZ92}). Thus, in this case, we define
\[
\mu_{\pm}(x) := \mu_{\pm}(\Phi), \quad \hat{\mu}(x) := \hat{\mu}(\Phi),
\]
and $\mu(x) := \mu(\Phi)$ for non-degenerate $x$. We call $\Phi$ the {\it associated symplectic path} of $x$. We sometimes use notations $\mu_{\pm}(x,H)$, $\hat\mu(x,H)$, $\mu(x,H)$ or $\mu_{\pm}(x,\varphi_H)$,  $\hat\mu(x,\varphi_H)$, $\mu(x,\varphi_H)$ to emphasize the Hamiltonian $H$, depending on the context.

Now consider the case of Reeb flow. Let $x$ be a closed Reeb orbit on a contact manifold $(\Sigma^{2n-1}, \alpha)$ with contact structure $\xi = \ker \alpha$. Assume $x$ is contractible with action $\mathcal{A}(x) = T$. Rescale $\alpha$ to $\frac{1}{T}\alpha$ (still denoted by $\alpha$) so that $\mathcal{A}(x) = 1$. The  definitions for Hamiltonian flow then extend analogously to the Reeb flow. Specifically, take a capping disk $w: D^2 \to \Sigma^{2n-1}$ satisfying $w(e^{2\pi i t}) = x(t)$. Fix a symplectic trivialization $\psi_w$ of $w^*\xi$, linearize the Reeb flow $\phi_\alpha^t$ along $x$ and restrict to $\xi$, obtaining a path $\Phi_w \in \widetilde{\mathrm{Sp}}(2n-2)$:
\[
\Phi_w(t) = \psi_w(x(t)) \circ \mathrm{d}\phi_\alpha^t(x(0))|_{\xi} \circ \psi_w^{-1}(x(0)), \quad t \in [0,1].
\]
Define $\mu_{\pm}(x; w) := \mu_{\pm}(\Phi_w)$ and $\hat{\mu}(x; w) := \hat{\mu}(\Phi_w)$. When $x$ is non-degenerate, set $\mu(x; w) := \mu(\Phi_w)$. If $c_1(\xi) = 0$, any two capping disks yield identical $\Phi \in \widetilde{\mathrm{Sp}}(2n-2)$, so we define
\[
\mu_{\pm}(x) := \mu_{\pm}(\Phi), \quad \hat{\mu}(x) := \hat{\mu}(\Phi),
\]
and $\mu(x) := \mu(\Phi)$ for non-degenerate $x$. As in the case of Hamiltonian flow, $\Phi$ is called the {\it associated symplectic path} of $x$.

For iterated orbits, define $w^k: D^2 \to \Sigma^{2n-1}$ by $w^k(z) = w(z^k / |z|^{k-1})$ for $z \neq 0$ and $w^k(0) = w(0)$. This provides a capping disk for $x^k$. Any symplectic trivialization $\psi_w$ of $w^*\xi$ extends to $(w^k)^*\xi$ analogously, satisfying $\Phi_{w^k} = \Phi_w^k$. Consequently, if $c_1(\xi) = 0$,
\[
\mu_{\pm}(x^k) = \mu_{\pm}(\Phi^k), \quad \hat{\mu}(x^k) = \hat{\mu}(\Phi^k),
\]
and $\mu(x^k) = \mu(\Phi^k)$ for non-degenerate $x$.

These definitions apply directly to our setting where  $\Sigma^{2n-1}$ bounds a star-shaped domain in $\mathbb{R}^{2n\geq4}$, as this implies that $\Sigma$ is simply connected and $H^2(\Sigma;\mathbb{Z})=0$.

\begin{remark}\label{rmk:frac grad}
Although the above definition suffices for our purposes, we note that \cite{Mcl16} introduced an alternative definition of Maslov-type indices for closed Reeb orbits $x$ that eliminates the contractibility assumption while coinciding with the standard definition in the contractible case. This approach has been adopted in recent works such as \cite{AM22,CGG24}.

The construction proceeds as follows. Assuming $c_1(\xi) = 0$, the top complex wedge power $L_\xi := \bigwedge_{\mathbb{C}}^{n-1} \xi$ forms a trivial complex line bundle. Choose either a trivialization of this bundle or equivalently, a nonvanishing section $\mathfrak{s}$ of $L_\xi$. Let $\psi: x^*\xi \to S^1 \times \mathbb{C}^{n-1}$ be a Hermitian trivialization whose induced trivialization on $\bigwedge_{\mathbb{C}}^{n-1} x^*\xi$ matches $\mathfrak{s}$. This condition uniquely determines the homotopy class of $\psi$. Furthermore, the induced trivialization on $x^k$ coincides with the $k$-th iteration of $\psi$ up to homotopy. Using this trivialization, we convert the linearized Reeb flow along $x$ into a path $\Phi \in \widetilde{\mathrm{Sp}}(2n-2)$. When $H_1(\Sigma; \mathbb{Q}) = 0$, the homotopy class $\Phi \in \widetilde{\mathrm{Sp}}(2n-2)$ is independent of the section $\mathfrak{s}$, resulting in well-defined Maslov-type indices for $x$. Complete details can be found in \cite[Section 4]{Mcl16}.
\end{remark}

\subsection{Splitting number and abstract precise iteration formula}\label{subsec:ind iterate}
In \cite{Long99}, to develop the Bott-type formula of the Maslov-type indices,  Long generalized the definition of $\mu_{\pm}$ to each $\omega\in\mathbb{U}$ (where $\mu_{\pm}$ corresponds to the case $\omega=1$) and introduced the splitting number theory for the symplectic matrix, which induces  a precise description of  
$\mu_-(\Phi^k)$ for arbitrary $k\in\mathbb{N}$.

\subsubsection{Splitting number}

Consider the following real function  introduced in \cite{Long99}:
\[D_{\omega}(M) =(-1)^{n-1}\bar{\omega}^n \det(M-\omega \mathrm{Id}),
~\forall\omega \in\mathbb{U},~ M \in \mathrm{Sp}(2m).\]
Then for each $\omega\in \mathbb{U}$ the following codimension $1$ hypersurface in $\mathrm{Sp}(2m)$ is defined:\[\mathrm{Sp}(2m)^0_\omega = \{M \in \mathrm{Sp}(2m) \mid D_\omega(M) =
0\}.\] 
Let
\begin{align*}
&\mathrm{Sp}(2m)_\omega^*:= \mathrm{Sp}(2m)\setminus\mathrm{Sp}(2m)^0_{\omega},\\
&\widetilde{\mathrm{Sp}}(2m)^*_{\omega}:=\{\Phi \in \widetilde{\mathrm{Sp}} (2m): \Phi(1)\in\mathrm{Sp}(2m)^*_{\omega}\}.
\end{align*} 
Following \cite{Long99}, the $\omega$-{\it index} $i_{\omega}$ is defined as follows. Firstly, for $\Phi \in \widetilde{\mathrm{Sp}}(2m)^*_{\omega}$, $i_\omega(\Phi)$ is defined analogously as the Conley-Zehnder index introduced in the previous subsection via replacing $\mathrm{Sp}(2m)^*$ and $\widetilde{\rm Sp}(2m)^*$ by $\mathrm{Sp}(2m)^*_\omega$ and  $\widetilde{\rm Sp}(2m)^*_\omega$ respectively. Then, for an arbitrary $\Phi\in \widetilde{\rm Sp}(2m)$, define
\begin{equation*}
    i_{\omega}(\Phi):=\liminf_{\Psi\to\Phi}i_{\omega}(\Psi),
\end{equation*}
where  the limit is taken over $\Psi\in\widetilde{\rm Sp}(2m)^*_\omega$ converging to $\Phi$. Like \eqref{eq:+-nullity}, there also holds
\begin{eqnarray*}
    i_{\omega}(\Phi)+\nu_{\omega}(\Phi)=\limsup_{\Psi\to\Phi}i_{\omega}(\Psi),
\end{eqnarray*} 
where $\nu_{\omega}(\Phi) := \dim_{\mathbb{C}} \ker_{\mathbb{C}}(\Phi(1) - \omega \mathrm{Id})$, and is called the $\omega$-{\it nullity} of $\Phi$.

The splitting number is defined by the ``jump'' of $\omega$-indices.
\begin{definition}
For each $M \in \mathrm{Sp}(2m)$ and $\omega \in \mathbb{U}$, {\it the splitting
numbers} $S^{\pm}_M (\omega)$ of $M$ at $\omega$ are defined by
\begin{equation}\label{eq:split num}
S^{\pm}_M (\omega)=\lim_{\epsilon\rightarrow 0^+}{i_{\omega
\exp{(\pm\sqrt{-1}\epsilon)}}(\Phi)-i_{\omega}(\Phi)},
\end{equation}
for any  $\Phi\in\widetilde{\mathrm{Sp}} (2m)$ satisfying
$\Phi(1) = M$.
\end{definition}

\begin{remark}\label{rmk: nullity for orbit}
The  $\omega$-nullity and the splitting number can be defined for a periodic orbit of Hamiltonian flow or Reeb flow by its (restricted) linearized Poincar\'e return map. For instance, let $x$ be a closed orbit  of a Hamiltonian (or Reeb) flow and $\Phi\in\widetilde{\mathrm{Sp}}(2m)$ be its associated symplectic path defined in Subsection \ref{subsect:index for orbits}, then we define 
\begin{equation*}
  \nu_\omega(x):=\nu_\omega(\Phi(1)),~~ ~~~~S_x^\pm(\omega):=S_{\Phi(1)}^\pm(\omega).
\end{equation*}
And we always omit the subscript $\omega$ of $\nu_\omega(x)$ when $\omega=1$.
\end{remark}

\begin{lemma}[\cite{Long99}] \label{lem:prop  split}
For each $M\in \mathrm{Sp}(2m)$ and $\omega\in \mathbb{U}$, the
splitting numbers $S^{\pm}_M (\omega)$ are well defined, i.e., they are independent of the choice of the path
$\Phi \in \widetilde{\rm Sp}(2m)$ with $\Phi(1)=M$ appeared in \eqref{eq:split num}. And
 the following properties hold:\begin{itemize}

\item[\hypertarget{list:S1}{\rm (S1)}]~$S^{+}_M (\omega)= S^{-}_M (\bar{\omega})$;


\item[\hypertarget{list:S2}{\rm (S2)}]~$S^{\pm}_{M_1\diamond M_2}(\omega)= S^{\pm}_{M_1}(\omega)+ S^{\pm}_{M_2}(\omega)$;

\item[\hypertarget{list:S3}{\rm (S3)}]~$S^{\pm}_M (\omega)=0$ whenever $\omega\notin \sigma(M)$;

\item[\hypertarget{list:S4}{\rm (S4)}]~$0\leq S^{\pm}_M (\omega)\leq \nu_{\omega}(M)$, where $\nu_{\omega}(M) := \dim_{\mathbb{C}} \ker_{\mathbb{C}}(M - \omega \mathrm{Id})$.
\end{itemize}
\end{lemma}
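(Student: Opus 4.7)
The plan is to establish well-definedness first and then treat the four numbered properties in the order (S3), (S2), (S1), (S4), using the analogous properties of the $\omega$-index $i_\omega$ that parallel the familiar properties of $\mu_{\pm}$ recalled just before the lemma.

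For well-definedness, I would argue that if $\Phi_1,\Phi_2\in\widetilde{\mathrm{Sp}}(2m)$ both end at $M$, then $\Phi_2$ is homotopic rel endpoints to $\Phi_1*\psi$ for some loop $\psi$ based at $M$. The key point is that the $\omega$-analogue of the formula \eqref{eq:differ by maslov} holds for every $\omega\in\mathbb{U}$ with the same right-hand side $2\mu_{\mathrm{Maslov}}(\psi)$: indeed, by construction, $i_\omega$ differs from an integer shift of a mean-index-type quantity by a quantity locally constant on $\widetilde{\mathrm{Sp}}(2m)^*_\omega$, so concatenating by a loop changes $i_\omega$ only by twice the Maslov degree, independent of $\omega$. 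Consequently the difference $i_{\omega\exp(\pm\sqrt{-1}\epsilon)}(\Phi)-i_\omega(\Phi)$ is independent of the representative $\Phi$, proving the splitting numbers are well defined.

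For (S3), if $\omega\notin\sigma(M)$ then $M\in\mathrm{Sp}(2m)^*_{\omega'}$ for all $\omega'$ in a small arc around $\omega$ in $\mathbb{U}$; since $i_{\omega'}$ is locally constant on $\widetilde{\mathrm{Sp}}(2m)^*_{\omega'}$, both $i_{\omega\exp(\pm\sqrt{-1}\epsilon)}(\Phi)$ and $i_\omega(\Phi)$ equal the same integer for $\epsilon>0$ sufficiently small, hence their difference vanishes. For (S2), I would choose a path $\Phi_1\diamond\Phi_2$ with $\Phi_k(1)=M_k$; by the additivity of $i_\omega$ under the $\diamond$-sum (which follows from the corresponding additivity of $\mu_{\pm}$ via the perturbation-and-limit definition, and the product property of $\rho$), both terms in \eqref{eq:split num} split as sums, yielding the claim. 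For (S1), I would use that complex conjugation in $\mathbb{C}^m$, viewed as an antisymplectic involution on $\mathbb{R}^{2m}$, conjugates $\Phi$ to a path $\overline{\Phi}$ with the same end-point spectrum (eigenvalues on $\mathbb{U}$ come in conjugate pairs), and that $i_{\bar\omega}(\overline{\Phi})=i_\omega(\Phi)$; replacing $\omega$ by $\bar\omega$ and $\epsilon$ by $-\epsilon$ in \eqref{eq:split num} then exchanges the roles of $+$ and $-$, giving $S^+_M(\omega)=S^-_M(\bar\omega)$.

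For (S4), I would reduce to a normal form: by the symplectic direct-sum additivity in (S2) and the fact that only eigenvalues of $M$ equal to $\omega$ contribute (by (S3)), it suffices to verify $0\leq S^\pm_M(\omega)\leq\nu_\omega(M)$ on each elementary Jordan block of $M$ with eigenvalue $\omega$. On such a block one can write down an explicit perturbation of the end-point that moves $M$ into $\mathrm{Sp}(2m)^*_{\omega\exp(\pm\sqrt{-1}\epsilon)}$, compute $i_{\omega\exp(\pm\sqrt{-1}\epsilon)}$ on the perturbed symplectic path, and compare to $i_\omega$; the jump counts (signed) eigenvalues crossing $\omega$ under the rotation, and is manifestly between $0$ and the dimension of the generalized eigenspace, i.e., $\nu_\omega(M)$. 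The main obstacle will be the case analysis for (S4): one must deal separately with $\omega=\pm 1$ (real eigenvalue blocks, where the perturbation has to respect the symplectic structure) and $\omega\in\mathbb{U}\setminus\{\pm 1\}$ (complex eigenvalue blocks, which already appear in conjugate pairs), and in each case carefully track the sign of the jump using the normalization axiom of $\rho$; everything else is soft and reduces to properties of $i_\omega$ that are essentially the $\omega$-twisted versions of the ones recalled in Section~\ref{subsec:index}.
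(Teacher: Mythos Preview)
The paper does not give its own proof of this lemma: it is stated with the citation \cite{Long99} and used as a black box, with the detailed arguments deferred to Long's original paper (and later to the monograph \cite{Long02}). So there is nothing in the paper to compare your proposal against.

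That said, your outline is broadly the standard one and would work for well-definedness, (S2), and (S3). Two points deserve care. For (S1), the ``complex conjugation as an antisymplectic involution'' route is not quite how Long proceeds and is delicate to make rigorous, because conjugating by an antisymplectic map does not obviously preserve $i_\omega$; Long instead uses the symmetry $i_{\bar\omega}(\Phi)+\nu_{\bar\omega}(\Phi)=\limsup$ together with the behavior of the $\omega$-index along the unit circle. For (S4), you conflate the generalized eigenspace with $\nu_\omega(M)=\dim_{\mathbb C}\ker_{\mathbb C}(M-\omega\mathrm{Id})$, which is the \emph{geometric} multiplicity; reducing to a single Jordan block and bounding the jump by the block size only yields $S^\pm_M(\omega)\leq$ (algebraic multiplicity), which is weaker than the stated bound. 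The actual proof in \cite{Long99,Long02} computes $S^\pm$ explicitly on each basic normal form (as in the paper's subsequent Lemma~\ref{lem:splt normal}) and checks the inequality there, rather than via a crossing-count heuristic.
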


\subsubsection{Abstract precise iteration formula}

For each $\Phi\in\widetilde{\mathrm{Sp}}(2m)$, the iteration behavior of indices $\mu_-(\Phi^k)$, $k\in\mathbb{N}$, is completely determined by $\mu_-(\Phi)$ and the splitting numbers of the end point $\Phi(1)$. Indeed, we have the following abstract precise iteration formula proved in \cite{LZ02}: 

\begin{proposition}[{\cite[Theorem 2.1]{LZ02}}]\label{prop:precise}
For each $\Phi\in\widetilde{\mathrm{Sp}}(2m)$ with $M=\Phi(1)$, and $k\in\mathbb{N}$, we have
\begin{equation}\label{eq:precise}
\begin{aligned}
\mu_-(\Phi^k)
&=k(\mu_-(\Phi)+S^{+}_M(1)-C(M))\\
&+2\sum_{\theta\in(0,2\pi)}\lceil\frac{k\theta}{2\pi}\rceil S^-_M(e^{\sqrt{-1}\theta})-(S_M^+(1)+C(M)),
\end{aligned}
\end{equation}
where $C(M):=\sum_{\theta\in(0,2\pi)}S_M^-(e^{\sqrt{-1}\theta})$.
\end{proposition}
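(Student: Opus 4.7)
The plan is to derive \eqref{eq:precise} from Long's Bott-type formula for the Maslov-type index together with a careful accounting of how the $k$-th roots of unity distribute relative to the spectrum of $M=\Phi(1)$ on the unit circle. The key inputs will be the Bott identity
\[
\mu_-(\Phi^k) \;=\; \sum_{\omega\in\mathbb{U},\;\omega^k=1} i_\omega(\Phi),
\]
(which reduces computing $\mu_-(\Phi^k)$ to understanding the function $\omega\mapsto i_\omega(\Phi)$ at the $k$-th roots of unity) and the jump structure of this function encoded by the splitting numbers.

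First, I would analyze $\theta\mapsto i_{e^{\sqrt{-1}\theta}}(\Phi)$ as a function on $[0,2\pi]$. By the definition of $S_M^\pm(\omega)$, this function is piecewise constant with jumps only at those $\theta$ for which $e^{\sqrt{-1}\theta}\in\sigma(M)$: as one moves through such an eigenvalue $\omega_0=e^{\sqrt{-1}\theta_0}$ counterclockwise, the left one-sided limit exceeds $i_{\omega_0}(\Phi)$ by $S_M^-(\omega_0)$, while the right one-sided limit exceeds $i_{\omega_0}(\Phi)$ by $S_M^+(\omega_0)$. Using $i_1(\Phi)=\mu_-(\Phi)$ as a base point and telescoping, one obtains for every $\theta\in(0,2\pi)$ with $e^{\sqrt{-1}\theta}\notin\sigma(M)$:
\[
i_{e^{\sqrt{-1}\theta}}(\Phi) \;=\; \mu_-(\Phi)+S_M^+(1)+\!\!\sum_{\substack{0<\theta'<\theta\\ e^{\sqrt{-1}\theta'}\in\sigma(M)}}\!\!\bigl(S_M^+(e^{\sqrt{-1}\theta'})-S_M^-(e^{\sqrt{-1}\theta'})\bigr).
\]

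Second, I would substitute this expression into the Bott sum and interchange the order of summation: each contribution $S_M^+(e^{\sqrt{-1}\theta'})$ or $-S_M^-(e^{\sqrt{-1}\theta'})$ is weighted by the number of $k$-th roots of unity $e^{2\pi\sqrt{-1}j/k}$, $j=0,\ldots,k-1$, with angle strictly greater than $\theta'$. This count equals $k-\lceil k\theta'/(2\pi)\rceil$ (for generic $\theta'$), which is exactly how the ceiling terms enter \eqref{eq:precise}. Using the symmetry property \hyperlink{list:S1}{(S1)}, namely $S_M^+(\omega)=S_M^-(\bar\omega)$, one can absorb the $S_M^+$ contributions at $\theta'\in(0,\pi)$ into $S_M^-$ contributions at $2\pi-\theta'\in(\pi,2\pi)$, thereby reorganizing everything into a single sum over $\theta\in(0,2\pi)$ involving only $S_M^-(e^{\sqrt{-1}\theta})$; the constant term $\mu_-(\Phi)+S_M^+(1)$ appears $k$ times (multiplied by the total number of roots), while the constant $C(M)=\sum_{\theta\in(0,2\pi)}S_M^-(e^{\sqrt{-1}\theta})$ arises from collecting the extra pieces and matches the subtraction $-(S_M^+(1)+C(M))$ in the formula.

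The delicate step will be the endpoint bookkeeping when a $k$-th root of unity coincides with an eigenvalue of $M$: in that case the Bott sum contains the value $i_{e^{\sqrt{-1}\theta}}(\Phi)$ itself rather than a one-sided limit, so one must verify that the ceiling function $\lceil k\theta/(2\pi)\rceil$ together with the asymmetric split into $S_M^-$ vs.\ $S_M^+$ (as dictated by property \hyperlink{list:S4}{(S4)} and the lower semicontinuity of $i_\omega$) still yields the correct contribution. This combinatorial case analysis, carried out in detail in \cite{LZ02}, is what makes the explicit form of \eqref{eq:precise} delicate despite the conceptually simple Bott--telescoping strategy.
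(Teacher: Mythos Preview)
The paper does not give its own proof of this proposition: it is stated with attribution to \cite[Theorem 2.1]{LZ02} and used as a black box. Your sketch correctly outlines the argument from that reference---the Bott-type iteration identity $\mu_-(\Phi^k)=\sum_{\omega^k=1}i_\omega(\Phi)$ together with the telescoping expression for $i_{e^{\sqrt{-1}\theta}}(\Phi)$ in terms of splitting numbers, followed by the root-of-unity counting that produces the ceiling terms---so there is nothing to correct.
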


As a consequence of Proposition \ref{prop:precise} and \eqref{eq:mean ind}, we have
\begin{corollary}[{\cite[Corollary 2.1]{LZ02}}]\label{coro:precise mean ind}
For each $\Phi\in\widetilde{\mathrm{Sp}}(2m)$ with $M=\Phi(1)$, we have
\begin{equation}\label{eq:precise mean ind}
\begin{aligned}
\hat\mu(\Phi)
&=\mu_-(\Phi)+S^{+}_M(1)-C(M)+\sum_{\theta\in(0,2\pi)}\frac{\theta}{\pi} S^-_M(e^{\sqrt{-1}\theta}).
\end{aligned}
\end{equation}
\end{corollary}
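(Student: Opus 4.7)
The proof is essentially a direct limit argument starting from the precise iteration formula \eqref{eq:precise} of Proposition \ref{prop:precise}, combined with the characterization of the mean index in \eqref{eq:mean ind}.

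The plan is to divide both sides of \eqref{eq:precise} by $k$ and let $k \to \infty$. Concretely, I would write
\begin{equation*}
\frac{\mu_-(\Phi^k)}{k} = \mu_-(\Phi) + S^+_M(1) - C(M) + \frac{2}{k}\sum_{\theta \in (0,2\pi)} \left\lceil \frac{k\theta}{2\pi}\right\rceil S^-_M(e^{\sqrt{-1}\theta}) - \frac{1}{k}\bigl(S^+_M(1) + C(M)\bigr).
\end{equation*}
The left-hand side converges to $\hat\mu(\Phi)$ by \eqref{eq:mean ind}. On the right, the last term clearly tends to $0$, and the crucial observation is that the sum over $\theta \in (0, 2\pi)$ is in fact a \emph{finite} sum: by property \hyperlink{list:S3}{(S3)} of Lemma \ref{lem:prop split}, $S^-_M(e^{\sqrt{-1}\theta}) = 0$ unless $e^{\sqrt{-1}\theta} \in \sigma(M)$, so only finitely many $\theta$ contribute. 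This finiteness trivializes any interchange of limit and summation.

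For each such $\theta$, the elementary bound $\frac{k\theta}{2\pi} \le \lceil \frac{k\theta}{2\pi}\rceil < \frac{k\theta}{2\pi} + 1$ gives
\begin{equation*}
\lim_{k \to \infty} \frac{2}{k}\left\lceil \frac{k\theta}{2\pi} \right\rceil = \frac{\theta}{\pi}.
\end{equation*}
Passing to the limit term by term yields exactly
\begin{equation*}
\hat\mu(\Phi) = \mu_-(\Phi) + S^+_M(1) - C(M) + \sum_{\theta \in (0,2\pi)} \frac{\theta}{\pi}\, S^-_M(e^{\sqrt{-1}\theta}),
\end{equation*}
which is the claimed identity. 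There is no real obstacle here; the statement is genuinely a corollary extracted by normalizing \eqref{eq:precise} and taking $k \to \infty$, with the finiteness of the spectrum of $M$ on $\mathbb{U}$ ensuring that the ceiling terms average out to the asserted $\theta/\pi$ weights.
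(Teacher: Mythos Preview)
Your proof is correct and follows exactly the approach indicated in the paper, which simply states the corollary ``as a consequence of Proposition \ref{prop:precise} and \eqref{eq:mean ind}'' without further detail. You have correctly filled in the routine limit computation, including the observation via \hyperlink{list:S3}{(S3)} that the sum is finite.
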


\subsubsection{Basic normal forms}

For an arbitrary $M\in \mathrm{Sp}(2m)$, the splitting numbers is subtle to compute. To overcome this difficulty,  Long  introduced 
in \cite{Long99} the {\it homotopy component}  $\Omega^0(M)$,  which is defined as the path connected component
containing $M$ of the set
\begin{equation*}
\begin{aligned}
  \Omega(M)=\{N\in{\rm Sp}(2m):\sigma(N)\cap\mathbb{U}=\sigma(M)\cap\mathbb {U}\;
{\rm and}~
\nu_{\omega}(N)=\nu_{\omega}(M),\;\forall
\omega\in\sigma(M)\cap \mathbb{U}\}, 
\end{aligned}
\end{equation*}
and the following {\it basic normal forms} :
\begin{eqnarray}
D(\lambda):=\begin{pmatrix}\lambda & 0 \\
         0  & \lambda^{-1}\end{pmatrix}, &\quad& \lambda=\pm 2,\nonumber\\
N_1(\lambda,b): = \begin{pmatrix}
    \lambda & a\\
         0  & \lambda
\end{pmatrix}, &\quad& \lambda=\pm 1, a=\pm1, 0, \nonumber\\
R(\theta):=\begin{pmatrix}
    \cos\theta & -\sin\theta\\
        \sin\theta  & \cos\theta
\end{pmatrix}, &\quad& \theta\in (0,\pi)\cup(\pi,2\pi),\nonumber\\
N_2(\omega,b):= \begin{pmatrix}R(\theta) & b\\
              0 & R(\theta)
              \end{pmatrix}, &\quad& \omega=e^{\sqrt{-1}\theta},~\theta\in (0,\pi)\cup(\pi,2\pi),\label{eq:4x4}
                    \end{eqnarray}
where $b=\begin{pmatrix}
    b_1 & b_2\\
    b_3 & b_4
\end{pmatrix}$ with  $b_i\in\mathbb{R}$ and  $b_2\not=b_3$. In addition, $N_2(\omega,b)$ is {\it non-trivial} if $(b_2-b_3)\sin\theta<0$, and  {\it trivial} if  $(b_2-b_3)\sin\theta>0$. 

With these preparations, the computation for the splitting numbers  can be  reduced to the cases of  basic normal forms. Indeed, we have the following lemma, the proof of the assertions can be found in \cite{Long99}.  
\begin{lemma}\label{lem:splt normal}
Let $\omega \in \mathbb{U}$ and $M\in \mathrm{Sp}(2m)$.  The following statements hold:
\begin{itemize}
    \item[(i)] there exists $N^*\in\Omega^0(M)$ such that $N^*$ is a symplectic direct sum (see Subsection \ref{subsec:nontation}) of some basic normal forms.
    \item[(ii)] the splitting numbers $S^{\pm}_N(\omega)$ are constant for all $N \in \Omega^0(M)$;
    \item[(iii)] the splitting numbers of the basic normal forms is given by :
    \begin{eqnarray*}
(S_{N_1(1,a)}^+(1),S_{N_1(1,a)}^-(1)) &=& \left\{\begin{aligned}(1,1), &\quad {\rm if}\;\; a\geq 0, \\
(0,0), &\quad {\rm if}\;\; a< 0. \end{aligned}\right. \\
(S_{N_1(-1,a)}^+(-1), S_{N_1(-1,a)}^-(-1))&=& \left\{\begin{aligned}(1,1), &\quad {\rm if}\;\; a\leq 0, \\
(0,0), &\quad {\rm if}\;\; a> 0. \end{aligned}\right. \\
(S_{R(\theta)}^+(e^{\sqrt{-1}\theta}), S_{R(\theta)}^-(e^{\sqrt{-1}\theta}))&=&(0,1),~ \theta \in (0, \pi) \cup (\pi, 2\pi).\\
(S_{N_2(\omega,b)}^+(\omega),S_{N_2(\omega,b)}^-(\omega)) &=& \left\{\begin{aligned}(1,1), &\quad \text{\rm if $N_2(\omega,b)$ is non-trivial}, \\
(0,0), &\quad\text{\rm if $N_2(\omega,b)$ is trivial}. \end{aligned}\right. \\
\end{eqnarray*}
\end{itemize}
\end{lemma}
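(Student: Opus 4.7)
The plan is to treat the three parts of the lemma in sequence, beginning with the existence of the normal-form representative~(i), then establishing invariance of the splitting numbers on the homotopy component $\Omega^0(M)$~(ii), and concluding with the case-by-case computation~(iii).

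For~(i), I would invoke the symplectic Jordan canonical form theorem (see, e.g., \cite{Long02}): every $M \in \mathrm{Sp}(2m)$ is symplectically conjugate to a symplectic direct sum of elementary blocks corresponding to (a) pairs of reciprocal real eigenvalues $\{\lambda, \lambda^{-1}\}$ with $\lambda \in \mathbb{R} \setminus \{0, \pm 1\}$; (b) quadruples $\{\lambda, \bar\lambda, \lambda^{-1}, \bar\lambda^{-1}\}$ off $\mathbb{U}$; (c) eigenvalues $\pm 1$ with their Jordan structure; and (d) eigenvalue pairs $\{e^{\pm i\theta}\}$ on $\mathbb{U}$ with their Jordan structure. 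By definition the component $\Omega^0(M)$ fixes the spectrum on $\mathbb{U}$ together with the nullities $\nu_\omega$, so it preserves the Jordan structure of the blocks of types (c) and (d). I would then construct explicit paths deforming the blocks of types (a) and (b) to the chosen standard representative built from $D(\pm 2)$, and deforming the unit-circle blocks to the listed forms $N_1(\pm 1,a)$, $R(\theta)$, $N_2(\omega,b)$. These deformations remain in $\Omega^0(M)$ because the unit-circle data are unchanged, while the eigenvalues off $\mathbb{U}$ can be moved freely without producing collisions with $\mathbb{U}$.

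For~(ii), I would argue that for any $N \in \Omega^0(M)$ and any sufficiently small $\epsilon > 0$, the perturbed point $\omega e^{\pm i\epsilon}$ lies outside $\sigma(N) \cap \mathbb{U}$ (by constancy of the unit-circle spectrum on $\Omega^0(M)$ and a compactness argument on any given path). Hence a continuous path $s \mapsto N_s$ in $\Omega^0(M)$ lifts to a path in $\widetilde{\mathrm{Sp}}(2m)^*_{\omega e^{\pm i\epsilon}}$ on which the locally constant index $i_{\omega e^{\pm i\epsilon}}$ does not change. Since $\nu_\omega$ is also constant on $\Omega^0(M)$, taking $\epsilon \to 0^+$ in the definition \eqref{eq:split num} shows that both $S^+_{N_s}(\omega)$ and $S^-_{N_s}(\omega)$ are independent of $s$. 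This independence on $\Omega^0(M)$ is in particular what legitimates reducing~(iii) to the enumerated basic normal forms, and it also implies that the splitting numbers in~\eqref{eq:split num} are independent of the choice of $\Phi$ lifting $M$.

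For~(iii), I would compute each basic normal form directly from the definition. For $R(\theta)$, an explicit perturbation $R(\theta + \epsilon)$ moves the eigenvalue $e^{i\theta}$ counterclockwise across the test point and leaves $e^{-i\theta}$ away from it; computing $i_{\omega e^{\pm i\epsilon}}$ on the canonical rotation path yields $(S^+,S^-) = (0,1)$. For $N_1(1,a)$, the sign of $a$ determines whether the Jordan block perturbs into elements of $\mathrm{Sp}(2)$ with eigenvalues on $\mathbb{U}$ or off $\mathbb{U}$, which controls whether the $1$-index jumps on either side; the case of $N_1(-1,a)$ is parallel. The $N_2(\omega,b)$ case is the most delicate: the Krein signature encoded by the sign of $(b_2 - b_3)\sin\theta$ dictates whether a small perturbation of $\omega$ produces a pair of elliptic eigenvalues (non-trivial block) or a hyperbolic quadruple off $\mathbb{U}$ (trivial block), producing $(1,1)$ and $(0,0)$ respectively. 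The main obstacle I anticipate is the $N_2$ computation, since it requires a careful analysis of the Krein form on the two-dimensional generalized $\omega$-eigenspace and of the resulting bifurcation; I would follow the arguments developed in \cite{Long02} to track the signature through the perturbation and then read off the jumps in $i_{\omega e^{\pm i\epsilon}}$.
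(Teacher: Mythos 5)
The paper does not actually prove this lemma; it is quoted wholesale from Long's work (\cite{Long99}, see also \cite[Chapters 1.8 and 9.1]{Long02}), so your proposal is compared against the standard proof there. Your outlines of (ii) and (iii) are broadly consistent with that proof: constancy of $S^{\pm}_N(\omega)$ along paths in $\Omega^0(M)$ does come from the local constancy of $i_{\omega e^{\pm\sqrt{-1}\epsilon}}$ on $\widetilde{\rm Sp}(2m)^*_{\omega e^{\pm\sqrt{-1}\epsilon}}$ together with a uniform choice of $\epsilon$ (though you should also justify that $i_\omega(\Phi_s)$ itself does not jump along the path, which uses the constancy of $\nu_\omega$), and the case-by-case computation, including the Krein-signature analysis for $N_2(\omega,b)$, is exactly how Long proceeds.

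The genuine gap is in (i). You assert that, because $\Omega^0(M)$ fixes $\sigma(M)\cap\mathbb{U}$ and the nullities $\nu_\omega$, it "preserves the Jordan structure" of the unit-eigenvalue blocks, and you then propose merely to deform those blocks to the listed forms. But $\nu_\omega(N)=\dim_{\mathbb{C}}\ker_{\mathbb{C}}(N-\omega\mathrm{Id})$ is only the geometric multiplicity; $\Omega(M)$ constrains neither the algebraic multiplicities of unit eigenvalues nor the sizes of their Jordan blocks. This is not a harmless imprecision: every basic normal form has unit-eigenvalue Jordan blocks of complex size at most $2$, so for an $M$ with a larger Jordan block at an eigenvalue on $\mathbb{U}$ there is \emph{no} symplectic direct sum of basic normal forms with the same Jordan structure, and your deformation scheme cannot terminate. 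The actual proof must destroy such blocks by pushing part of the spectrum off $\mathbb{U}$ while keeping the unit spectrum as a set and the geometric multiplicities fixed --- this is precisely the content of Cases 3 and 4 in \cite[Section 1.8]{Long02}, which the present paper itself invokes in Step 4 of the proof of Theorem \ref{thm:main}. Under your stated premise that the Jordan structure is preserved on $\Omega^0(M)$, this indispensable step is ruled out, so part (i) as you have set it up would fail for general $M$.
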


\subsection{Common index jump theorem}\label{subsec:CIJT}
We state the following Common Index Jump Theorem, originally from \cite[Theorem 4.3]{LZ02}, with an enhanced version in \cite[Theorem 3.5]{DLW16}. To maintain notational consistency throughout this article, our formulation here  follows \cite[Theorem 5.2]{GG20}.
 \begin{theorem}\label{thm:CIJT}  
 Let $\Phi_1,\ldots, \Phi_r$ be a finite collection of
elements in $\widetilde{\rm Sp}(2m)$  satisfying $\hat\mu(\Phi_i)>0$ for $1\le i\le r$. Then for any
$\eta>0$, 
there exists an integer sequence $d_j\rightarrow\infty$ and $r$ integer sequences
$k_{ij}\rightarrow \infty$ 
such that for $1\le i\le r$, $j\in\mathbb{N}$, 
we have
\begin{align} &|\hat\mu(\Phi_i^{k_{ij}})-d_j|<\eta,\label{eq:IR1}\\
&\mu_\pm(\Phi_i^{k_{ij}+1})=d_j+\mu_\pm(\Phi_i),\label{eq:IR2}\\
&\mu_+(\Phi_i^{k_{ij}-1})=d_j-(\mu_-(\Phi_i)+2S^+_{\Phi_i(1)}(1)-\nu(\Phi_i)),\label{eq:IR3}
\end{align}
Moreover, for any $N\in\mathbb{N}$, we can make all $d_j$ and $k_{ij}$ divisible by $N$.

\end{theorem}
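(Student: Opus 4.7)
\textbf{Proof proposal for Theorem \ref{thm:CIJT}.}

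The plan is to combine the abstract precise iteration formula of Proposition \ref{prop:precise} with simultaneous Diophantine approximation on a torus. Write $M_i=\Phi_i(1)$ and, using Corollary \ref{coro:precise mean ind} to rewrite the linear coefficient in \eqref{eq:precise}, rearrange the precise iteration formula in the form
\begin{equation*}
\mu_-(\Phi_i^{k})=k\hat\mu(\Phi_i)+2\sum_{\theta\in(0,2\pi)}\Big(\big\lceil \tfrac{k\theta}{2\pi}\big\rceil-\tfrac{k\theta}{2\pi}\Big)S^-_{M_i}(e^{\sqrt{-1}\theta})-\big(S^+_{M_i}(1)+C(M_i)\big).
\end{equation*}
The sum has only finitely many non-zero terms, indexed by the arguments $\theta\in(0,2\pi)$ with $e^{\sqrt{-1}\theta}\in\sigma(M_i)$. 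Combined with \eqref{eq:+-nullity}, this shows that $\mu_\pm(\Phi_i^k)$ depends on $k$ only through $k\hat\mu(\Phi_i)$ and the fractional parts $\{k\theta/(2\pi)\}$ at the relevant spectral angles.

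Next, I would invoke a Kronecker-type simultaneous approximation: let $\Lambda\subset\mathbb{R}$ be the finite set consisting of all the numbers $\theta/(2\pi\hat\mu(\Phi_i))$ for $i\in\{1,\dots,r\}$ and for each $\theta\in(0,2\pi)$ with $e^{\sqrt{-1}\theta}\in\sigma(M_i)$, together with the numbers $1/\hat\mu(\Phi_i)$. Using Kronecker's theorem applied to $N\cdot\Lambda\subset\mathbb{R}^{|\Lambda|}$ modulo $\mathbb{Z}^{|\Lambda|}$ (so that $N\mid d_j$ is automatic), I can produce a sequence of positive integers $d_j\to\infty$ with $N\mid d_j$ such that for every $i$ and every relevant $\theta$,
\begin{equation*}
\Big\|\tfrac{d_j}{\hat\mu(\Phi_i)}\Big\|<\delta_j,\qquad \Big\|\tfrac{d_j\theta}{2\pi\hat\mu(\Phi_i)}-\epsilon_{i,\theta}\Big\|<\delta_j,
\end{equation*}
where $\|\cdot\|$ denotes distance to the nearest integer, $\delta_j\to 0$, and the shifts $\epsilon_{i,\theta}\in(0,\delta_0)$ are chosen uniformly \emph{from below} (so that the quantities $d_j\theta/(2\pi\hat\mu(\Phi_i))$ are slightly less than integers). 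Setting $k_{ij}$ to be the nearest integer to $d_j/\hat\mu(\Phi_i)$ yields \eqref{eq:IR1} and ensures $(k_{ij}\pm 1)\theta/(2\pi)$ sits in a controlled position relative to the nearest integer $n_{i,\theta,j}^{\pm}$.

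Finally, I would substitute back into the rearranged iteration formula to compute $\mu_\pm(\Phi_i^{k_{ij}\pm 1})$. For each $\theta\in(0,2\pi)$ with $e^{\sqrt{-1}\theta}\in\sigma(M_i)$, the sign of the small perturbation (chosen from below) determines whether $\lceil(k_{ij}+1)\theta/(2\pi)\rceil$ differs from $\lceil k_{ij}\theta/(2\pi)\rceil$ by exactly $1$ (it does, for all such $\theta$ and $j$ large); similarly for the $k_{ij}-1$ case. A direct bookkeeping against the splitting-number identities of Lemma \ref{lem:splt normal} and the relation $S^-_M(e^{-\sqrt{-1}\theta})=S^+_M(e^{\sqrt{-1}\theta})$ from \hyperlink{list:S1}{(S1)} produces the exact formulas \eqref{eq:IR2} and \eqref{eq:IR3}, using \eqref{eq:+-nullity} to pass from $\mu_-$ to $\mu_+$.

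\textbf{Main obstacle.} The difficulty is not finding sequences $k_{ij},d_j$ that satisfy \eqref{eq:IR1} approximately---Dirichlet's theorem gives this immediately---but rather forcing the identities \eqref{eq:IR2} and \eqref{eq:IR3} to hold \emph{exactly}. The precise integer value of $\mu_\pm(\Phi_i^{k_{ij}\pm 1})$ is sensitive to which side of an integer the quantities $k_{ij}\theta/(2\pi)$ approach, and this must be the same side simultaneously for all $i$ and all spectral angles $\theta$. This forces the use of the finer Kronecker equidistribution theorem (rather than mere Dirichlet approximation) and requires one to choose the signs of the target residues $\epsilon_{i,\theta}$ coherently. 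The divisibility requirement $N\mid d_j$ is absorbed into the same Kronecker argument by working in the torus with lattice $\mathbb{Z}^{|\Lambda|}/N\mathbb{Z}^{|\Lambda|}$.
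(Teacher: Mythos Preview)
The paper does not give its own proof of Theorem \ref{thm:CIJT}; it merely cites the result from \cite{LZ02}, \cite{DLW16}, and \cite{GG20}. Your sketch follows the correct overall architecture of those original proofs---combine the precise iteration formula \eqref{eq:precise} (rewritten via Corollary \ref{coro:precise mean ind}) with simultaneous Diophantine approximation of the spectral angles---so in spirit you are reproducing the cited argument rather than offering an alternative.

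That said, two points in your sketch are genuine gaps rather than mere abbreviations. First, you invoke Kronecker's theorem to place the residues $d_j\theta/(2\pi\hat\mu(\Phi_i))$ near \emph{prescribed} targets $\epsilon_{i,\theta}\in(0,\delta_0)$. Kronecker's theorem only guarantees density in the closure of the subgroup generated by the frequency vector; without an irrationality/independence hypothesis on the angles (which is not assumed here---some $\theta$ may be rational multiples of $\pi$, or rationally dependent on one another), you cannot hit arbitrary targets. The actual proofs in \cite{LZ02,DLW16} use Dirichlet-type approximation to drive all the relevant fractional parts to $0$ simultaneously (this always works), and then a separate sign-selection argument---essentially replacing a given sequence $d_j$ by a carefully chosen subsequence---to force the approach to each integer to be from the correct side. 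Your ``choose $\epsilon_{i,\theta}$ from below'' shortcut hides exactly this step.

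Second, the divisibility claim is incomplete. You arrange $N\mid d_j$ by scaling $\Lambda$, but then define $k_{ij}$ as the nearest integer to $d_j/\hat\mu(\Phi_i)$; nothing in your construction forces this nearest integer to be a multiple of $N$. In the cited proofs one instead runs the entire approximation argument with $\Phi_i$ replaced by $\Phi_i^N$ (equivalently, with the angles $N\theta_{i,\ell}/(2\pi)$), so that the output integers are $k_{ij}/N$ and divisibility is automatic; the divisibility of $d_j$ then follows from \eqref{eq:IR1} together with $\hat\mu(\Phi_i^{Nk})=Nk\hat\mu(\Phi_i)$ once the $k_{ij}$ are fixed. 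Your outline should be reorganized along these lines.
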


In what follows, we will  refer to the collection $\{k_{ij},d_{j}\}$ for a fixed $j$ as a {\it common index jump event}. 


Under the assumption of dynamically convexity, we have the following proposition as a consequence of Lemma \ref{lem:ind mono} and Theorem \ref{thm:CIJT}.

\begin{proposition}\label{prop:CIJT}
 Suppose the paths $\Phi_1,\ldots,\Phi_r$ in Theorem \ref{thm:CIJT}
are dynamically convex. Then for  all $l\in\mathbb{N}$, we have
\begin{align} &\mu_-(\Phi_i^{k_{ij}+l})\ge d_j+2+m,\label{eq:CIJT low}\\
&\mu_+(\Phi_i^{k_{ij}-l})\le d_j-2.\label{eq:CIJT up}
\end{align}
\end{proposition}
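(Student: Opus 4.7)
The strategy is to separate the two inequalities and reduce each one to Theorem \ref{thm:CIJT} plus the monotonicity in Lemma \ref{lem:ind mono}. The lower bound is almost immediate. By \eqref{eq:IR2} and dynamical convexity,
\[
\mu_-(\Phi_i^{k_{ij}+1}) = d_j + \mu_-(\Phi_i) \geq d_j + m + 2,
\]
and Lemma \ref{lem:ind mono} then forces $\mu_-(\Phi_i^{k_{ij}+l}) \geq \mu_-(\Phi_i^{k_{ij}+1}) + 2(l-1) \geq d_j + m + 2$ for every $l \geq 1$.

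For the upper bound, I first handle $l=1$ using \eqref{eq:IR3}. The crucial arithmetic step is showing
\[
\mu_-(\Phi_i) + 2 S^+_{\Phi_i(1)}(1) - \nu(\Phi_i) \geq 2.
\]
I would pass to a normal form in $\Omega^0(\Phi_i(1))$ via Lemma \ref{lem:splt normal}(i): the quantity $2S^+(1) - \nu(\Phi_i)$ is unaffected by all non-$N_1(1,\cdot)$ basic blocks (they carry no eigenvalue $1$), and the per-block values from Lemma \ref{lem:splt normal}(iii) add up to $n_+ - n_-$, where $n_\pm$ counts the $N_1(1,\pm 1)$ summands. Each such block is $2$-dimensional inside a $2m$-dimensional space, so $n_- \leq m$, and dynamical convexity $\mu_-(\Phi_i) \geq m+2$ gives the inequality. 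Plugging into \eqref{eq:IR3} yields $\mu_+(\Phi_i^{k_{ij}-1}) \leq d_j - 2$.

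For $l \geq 2$, I would use the precise iteration formula \eqref{eq:precise} together with the specific choice of $k_{ij}$ dictated by the proof of CIJT, namely that the fractional parts $\{k_{ij}\theta/(2\pi)\}$ can be taken arbitrarily close to $1$ for every $e^{i\theta} \in \sigma(\Phi_i(1))\cap\mathbb{U}\setminus\{\pm 1\}$. This ensures $\nu(\Phi_i^{k_{ij}-l}) = \nu(\Phi_i)$ and
\[
\bigl\lceil (k_{ij}-l)\theta/(2\pi) \bigr\rceil = \bigl\lceil k_{ij}\theta/(2\pi) \bigr\rceil - \lfloor l\theta/(2\pi)\rfloor,
\]
so subtracting \eqref{eq:precise} at iterations $k_{ij}-l$ and $k_{ij}-1$ and rearranging with $\mu_-(\Phi^l) = lA + 2\sum_\theta \lceil l\theta/(2\pi)\rceil S^-(e^{i\theta}) - (S^+(1)+C)$ yields the identity
\[
\mu_+(\Phi_i^{k_{ij}-l}) = d_j - \bigl(\mu_-(\Phi_i) + 2S^+(1) - \nu(\Phi_i)\bigr) - \bigl(\mu_-(\Phi_i^l) - \mu_-(\Phi_i)\bigr) - 2\sum_{\theta:\,l\theta/(2\pi)\in\mathbb{Z}} S^-(e^{i\theta}).
\]
Under dynamical convexity all three bracketed quantities are non-negative (the first is $\geq 2$ by the previous paragraph, the second is $\geq 2(l-1) \geq 0$ by Lemma \ref{lem:ind mono}, the third by non-negativity of splitting numbers), so $\mu_+(\Phi_i^{k_{ij}-l}) \leq d_j - 2$.

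The main obstacle is verifying the displayed identity cleanly, i.e. justifying that the error parameter $\eta$ in \eqref{eq:IR1} can be taken small enough to pin down the ceilings $\lceil (k_{ij}\pm l)\theta/(2\pi)\rceil$ exactly and to guarantee $\nu(\Phi_i^{k_{ij}-l}) = \nu(\Phi_i)$ for the finitely many $l$ and $\theta$ under consideration. Once this bookkeeping is in place, the argument is purely algebraic manipulation with \eqref{eq:precise}, Lemma \ref{lem:ind mono}, and the normal form analysis of the splitting numbers.
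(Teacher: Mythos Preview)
Your treatment of the lower bound \eqref{eq:CIJT low} and of the upper bound \eqref{eq:CIJT up} at $l=1$ is exactly what the paper does: apply \eqref{eq:IR2} and \eqref{eq:IR3} respectively, and for the latter bound $2S^+_{\Phi_i(1)}(1)-\nu(\Phi_i)$ below by $-m$ via the normal-form decomposition of Lemma~\ref{lem:splt normal}.

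Where you diverge is the upper bound for $l\geq 2$. The paper handles this in one line: by the index-increasing property of Lemma~\ref{lem:ind mono}, it suffices to treat $l=1$. You instead unpack the precise iteration formula \eqref{eq:precise} together with internal features of the CIJT construction (fractional parts $\{k_{ij}\theta/(2\pi)\}$ close to $1$, and $\nu(\Phi_i^{k_{ij}-l})=\nu(\Phi_i)$). This route is both more elaborate than necessary and not quite correct as stated: if $\Phi_i(1)$ has a root-of-unity eigenvalue $e^{i\theta}$, then $k_{ij}\theta/(2\pi)$ is an integer whenever $k_{ij}$ is divisible by the relevant $N$, so the fractional part is $0$, not close to $1$; and for $l$ divisible by the order of such an eigenvalue one has $\nu(\Phi_i^{k_{ij}-l})>\nu(\Phi_i)$. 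Your displayed identity therefore fails for infinitely many $l$, and the ``finitely many $l$'' remark in your closing paragraph does not match the statement, which requires all $l\in\mathbb{N}$.

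The paper's shortcut avoids all of this. The monotonicity invoked is that $\mu_+(\Phi_i^{k})$, like $\mu_-(\Phi_i^{k})$, is strictly increasing in $k$ under dynamical convexity; this is a standard companion to Lemma~\ref{lem:ind mono} in the references cited there (e.g.\ \cite{LZ02,GG20}), even though the paper's Lemma~\ref{lem:ind mono} records only the $\mu_-$ version explicitly. Once you have $\mu_+(\Phi_i^{k_{ij}-1})\leq d_j-2$, the increasing property immediately gives $\mu_+(\Phi_i^{k_{ij}-l})\leq\mu_+(\Phi_i^{k_{ij}-1})\leq d_j-2$ for every $l\geq 1$, and no further computation is needed.
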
 

\begin{proof}
By the index increasing property stated in Lemma \ref{lem:ind mono}, it is sufficient to consider the case $l=1$. Then we conclude
\[\mu_-(\Phi_i^{k_{ij}+1})\ge d_j+2+m,\]
from \eqref{eq:IR2} and the dynamically convexity, and \eqref{eq:CIJT low} follows. 

 Due to \eqref{eq:IR3}, to prove \eqref{eq:CIJT up}, it remains to show that

\begin{equation}\label{eq:geq 2}
    \mu_-(\Phi_i)+2S^+_{\Phi_i(1)}(1)-\nu(\Phi_i)\geq2.
\end{equation}
 Fix $i\in\{1.\dots,r\}$. According to (i) of Lemma \ref{lem:splt normal}, we  can find $M'\in \Omega^0(\Phi_i(1))$ such that
\[M'=N(1,1)^{\diamond p_+}\diamond\mathrm{Id}_2^{\diamond p_0}\diamond N(1,-1)^{\diamond p_-}\diamond G_,\]
where $p_\pm$, $p_0$ are nonnegative integers with $p_-+p_0+p_{+}\leq m$ and $G$ is a symplectic matrix with $1\notin \sigma(G)$. Then we have 
\[\nu(\Phi_i)=p_++2p_0+p_-~~~~ \text{and}~~~~ S^+_{\Phi_i(1)}(1)=p_++p_0.\]
Consequently,
\begin{equation}\label{eq:geq -m}
   2S^+_{\Phi_i(1)}(1)-\nu(\Phi_i)=p_+-p_- \geq- p_-\geq-m.
\end{equation}
Then \eqref{eq:geq 2} follows from \eqref{eq:geq -m} and the dynamically convexity.
\end{proof}

\section{Proof of Theorem \ref{thm:local max}}\label{sec:local max}

\subsection{Analogue for local Floer homology}
Our strategy to prove Theorem \ref{thm:local max} involves translating the equivariant local symplectic homology of a closed Reeb orbit to the local Floer homology via its Poincar\'{e} return map and then establishing an analogous argument. Based on the discussion of the local model in Subsection \ref{subsect:local model}, it suffices to consider the local Floer homology in the case where $W^{2n}=\mathbb{R}^{2n}$ and $x(t)\equiv p=0$ is a constant one-periodic orbit of a Hamiltonian $H$ defined in a neighborhood of $p$.

\begin{theorem}\label{thm:vanish}
Let $p$ be an isolated fixed point  of the Hamiltonian diffeomorphism $\varphi=\varphi_H^1$. Assume that
\begin{itemize}
    \item  $\mathrm{HF}_{\mu_+}(H,p)\neq0$, where $\mu_+$ is abberiviation of $\mu_+(p,H)$;
     \item $\mu_{+}(p,H)=d+n,$ where $d$ is some integer satisfying $|d-\hat{\mu}(p,H)|<\frac{1}{2}$;   
\end{itemize}
then we have 
\begin{equation*}
    \mathrm{HF}_*(H,p)=0, \ \ \text{whenever $*\neq \mu_{+}(p,H)$.}
\end{equation*}
\end{theorem}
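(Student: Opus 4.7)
The plan is to translate Theorem \ref{thm:vanish} into an application of Lemma \ref{lem:KF}(b), via a grading shift together with a reduction to an autonomous-like Hamiltonian.

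\textbf{Step 1: grading shift.} Using \hyperlink{list:LF3}{(LF3)}, I would construct a loop of Hamiltonian diffeomorphisms $\varphi_G^t$ on a neighborhood of $p$, with $\varphi_G^0 = \varphi_G^1 = \mathrm{id}$ fixing $p$ and with $\mu_{\mathrm{Maslov}}(\mathrm{d}(\varphi_G^t)_p) = -d/2$, realized by a time-independent quadratic Hamiltonian that generates an appropriate rotation loop in $\mathrm{Sp}(2n)$. Setting $K := G \# H$, property (LF3) yields $\mathrm{HF}_{*+2\mu_{\mathrm{Maslov}}}(K, p) = \mathrm{HF}_*(H, p)$, while \eqref{eq:differ by maslov} gives $\mu_+(p, K) = n$, $\hat\mu(p, K) \in [0, 1/2)$, and in particular $\mathrm{HF}_n(K, p) \neq 0$. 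It therefore suffices to prove $\mathrm{HF}_*(K, p) = 0$ for $* \neq n$.

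\textbf{Step 2: reduction to an autonomous reference function.} To set up Lemma \ref{lem:KF}, I would homotope $K$ through a family $K^s$, $s \in [0, 1]$, preserving $p$ as a uniformly isolated constant $1$-periodic orbit, to an endpoint $K^1$ whose time-$1$ map is $C^1$-close to $\mathrm{id}$ (for instance by rescaling the Hamiltonian in time on the small neighborhood of $p$ after cutting off outside it). The generating function construction of Subsection \ref{subsec:GF} applied to $\varphi_{K^1}^1$ then produces an autonomous reference $F$ near $p$ with $\mathrm{d}F_p = 0$ and $p$ an isolated critical point. The formula $\varphi(z) - z = X_F(\psi(z))$ at the end of Subsection \ref{subsec:GF} lets one verify directly the pointwise closeness $\|X_{(K^1)_t} - X_F\|$, the bound on $\|\dot X_{(K^1)_t}\|$, and the Hessian smallness required by Lemma \ref{lem:KF} with $T = 1$. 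Homotopy invariance \hyperlink{list:LF1}{(LF1)} gives $\mathrm{HF}_*(K^1, p) = \mathrm{HF}_*(K, p)$.

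\textbf{Step 3: conclusion.} Lemma \ref{lem:KF}(a) identifies $\mathrm{HF}_*(K^1, p) \simeq \mathrm{HM}_{*+n}(F, p)$, while $\mathrm{HF}_n(K^1, p) \neq 0$ combined with Lemma \ref{lem:KF}(b) forces $F$ to attain a strict local maximum at $p$ in $\mathbb{R}^{2n}$. By Remark \ref{rmk:local maximal}, $\mathrm{HM}_*(F, p)$ is concentrated in degree $2n$, so $\mathrm{HF}_*(K^1, p)$ is concentrated in degree $n$. Undoing the grading shift via (LF3) yields $\mathrm{HF}_*(H, p) = 0$ for $* \neq \mu_+(p, H)$, as claimed.

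\textbf{Main obstacle.} The principal difficulty is the parity of $d$, since the Maslov index of any loop is necessarily an integer, so a shift of $\mu_+$ by exactly $-d$ requires $d$ to be even. I expect this to demand either a parity argument, combining the splitting-number decomposition of Subsection \ref{subsec:ind iterate} at $\omega = 1$ with the tight constraint $\hat\mu \in [d, d + 1/2)$ to show that $\mathrm{HF}_{\mu_+}(H, p) \neq 0$ forces $d$ to be even, or a K\"unneth workaround via \hyperlink{list:LF2}{(LF2)}, momentarily tensoring with a non-degenerate orbit of odd Conley-Zehnder index in an auxiliary symplectic factor to correct the parity before reducing back. A secondary technical point is carrying out the homotopy in Step 2 while keeping $p$ uniformly isolated as a $1$-periodic orbit; I would handle this by performing the rescaling only after first localizing (via (LF1)) to a Hamiltonian supported in a small neighborhood of $p$ whose only periodic orbits therein are constant.
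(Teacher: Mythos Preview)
Your Step 2 contains a genuine gap that cannot be patched by the suggestions you list. After the loop composition in Step 1, the time-1 map of $K=G\#H$ is still $\varphi_H^1$ itself (since $\varphi_G^1=\mathrm{id}$), so nothing has changed about $\mathrm{d}\varphi_p$. The generating-function construction of Subsection~\ref{subsec:GF} requires $\|\varphi-\mathrm{id}\|_{C^1}$ to be small, which forces all eigenvalues of $\mathrm{d}\varphi_p$ to be near $1$. If $\mathrm{d}\varphi_p$ has eigenvalues away from $1$ (hyperbolic, or on $\mathbb{U}\setminus\{1\}$), there is no homotopy of Hamiltonian diffeomorphisms from $\varphi$ to something $C^1$-close to $\mathrm{id}$ that keeps $p$ uniformly isolated: along such a homotopy the linearization must pass through matrices with eigenvalue $1$, and generically this produces bifurcating periodic orbits near $p$, destroying the hypothesis of \hyperlink{list:LF1}{(LF1)}. ``Rescaling the Hamiltonian in time'' does not help either: reparametrizing leaves $\varphi$ unchanged, while scaling $H\mapsto\epsilon H$ replaces $\varphi$ by $\varphi_H^\epsilon$, which is a different germ with different local Floer homology.

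The paper's proof handles exactly this obstruction by first reducing (via a homotopy that \emph{fixes} the linearization $(\mathrm{d}\varphi_s)_p\equiv\mathrm{d}\varphi_p$) to the case where $\varphi$ is split as a product of a non-degenerate germ on $V_0$ and a totally degenerate germ on $V_1$. The K\"unneth formula \hyperlink{list:LF2}{(LF2)} then separates the two factors; the non-degenerate factor is handled trivially, and only on the totally degenerate factor $V_1$ can one legitimately make $\|\Phi-\mathrm{Id}\|$ small (by \cite[Lemma~5.5]{Gi10}, since all eigenvalues equal $1$) and invoke the generating function plus Lemma~\ref{lem:KF}. The parity problem you flag is a symptom of the same issue and is dissolved by this splitting: on the totally degenerate factor $\hat\mu$ is automatically an even integer by \hyperlink{list:MI3}{(MI3)}, so no artificial parity correction is needed. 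The nontrivial computation in the paper (Claims~1 and~2) is precisely the verification that the specific Hamiltonian $K$ built from the generating function has $\hat\mu(p,K)=0$, which is what makes the grading line up with Lemma~\ref{lem:KF}(b).
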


\subsubsection{Particular case 1: \texorpdfstring{$p$}{Lg} is non-degenerate}
In this case, the assertion holds obviously. Indeed, since  $p$ is non-degenerate, we have $\mu_+(p,H)=\mu(p,H)$ and 
\begin{equation*}
    \mathrm{HF}_*(H,p)=\left\{ \begin{aligned}
     \mathbb{F}, & \ \ \ \ *=\mu(p,H),\\
     0,          & \ \ \ \ \text{otherwise,}
    \end{aligned}
    \right .
\end{equation*}
by Example \ref{exam:non degen HF}.
\subsubsection{Particular case 2: \texorpdfstring{$p$}{Lg} is totally degenerate} 
In this case, all the eigenvalues of $\mathrm{d}\varphi_p$ are equal to one, then $\hat{\mu}(p,H)$ is an integer by \hyperlink{list:MI3}{(MI3)} of Section \ref{subsec:index}.  Hence we have 
\begin{equation}\label{eq:mean d}
\hat{\mu}(p,H)=d
\end{equation}
by the second hypothesis.
Let $\Phi= \mathrm{d}\varphi_p$ and $V=T_pM\simeq \mathbb{R}^{2n}$. By \cite[Lemma 5.5]{Gi10}, we can find a decomposition of $V=L\oplus L'$, where both $L$ and $L'$ are Lagrangian subspaces of $V$ and $\Phi(L)=L$. Moreover, we can choose a linear canonical coordinate system $(x,y)$ on $V$, which is compatible with the decomposition, i.e., such that the $x$-coordinates span $L$ and the $y$-coordinates span $L'$. \cite[Lemma 5.5]{Gi10} asserts that we can make $\| \Phi-{\rm Id} \|$ arbitrarily small by a suitable choice of such coordinate system. As a consequence, the generating function $F$ of $\varphi$ can be defined in a neighborhood $U$ of $p$ as in Section \ref{subsec:GF}. Set $z=(x,y)\in\mathbb{R}^{2n}$, then $F$ is determined by the equation
\begin{equation}\label{eq:F def}
    \varphi(z)-z=X_F(\psi(z)),
\end{equation}
where $X_F$ is the Hamiltonian vector field of $F$, and $\psi:U\to U$ is a diffeomorphism defined by 
\begin{equation*}
\psi(z)=(\text{$x$-component of $\varphi(z)$},y).
\end{equation*}

As in \cite[Section 6]{Gi10}, starting with $F$, one can construct near $p$ a 1-periodic Hamiltonian $K_t$ with time-1 map $\varphi$, such that the pair $(K,F)$ satisfies the hypotheses of Lemma \ref{lem:KF}. Then by Remark \ref{rmk:local maximal} we can conclude that
\begin{equation}\label{eq:KF}
\left\{
  \begin{aligned}
      &\mathrm{HF}_*(K,p)=\mathrm{HM}_{*+n}(F, p),\\
      &\mathrm{HF}_n(K,p)\neq0 \Rightarrow \mathrm{HF}_*(K,p)=0, \forall*\neq n.
  \end{aligned}
\right.    
\end{equation}

On the other hand, consider the composition $H\# \overline{K}$ introduced in Section \ref{subsec:local Floer}, whose Hamiltonian flow $\varphi_H^t\circ(\varphi_K^t)^{-1}$ fix $p$. Since both the Hamiltonians $K$ and $H$ generate the same time-1 map $\varphi$,  such flow is a loop of Hamiltonian diffeomorphisms starting at $\mathrm{id}$ in a neighborhood of $p$, fixing $p$. Note that $H=(H\#\overline{K})\#K$. According to \hyperlink{list:LF3}{(\text{LF3})} of Section \ref{subsec:local Floer},  we have
\begin{equation}\label{eq:HK}
    \mathrm{HF}_{*+m}(H,p)=\mathrm{HF}_*(K,p),~~~m=2\mu_{\rm maslov}(\psi), 
\end{equation}
where $\psi$ is a loop in $\mathrm{Sp}(2n)$ given by $\psi(t):=(\mathrm{d}\varphi_H^t)_p\circ(\mathrm{d}\varphi_K^t)_p^{-1}$. By Remark \ref{rmk:multiplicate to concatenation}, $(\mathrm{d}\varphi_H^t)_p=\psi(t)\circ(\mathrm{d}\varphi_K^t)_p$ can be viewed as concatenation of $(\mathrm{d}\varphi_K^t)_p$ and $\psi\circ(\mathrm{d}\varphi_K^1)_p$ up to a homotopy with fixed end. Then by \eqref{eq:differ by maslov}, we have
\begin{equation}\label{eq: shift m}
\begin{aligned}
  m=2\mu_{\rm maslov}(\psi)&=2\mu_{\rm maslov}(\psi\circ(\mathrm{d}\varphi_K^1)_p)\\
  &=\hat{\mu}(\psi(t)\circ(\mathrm{d}\varphi_K^t)_p)-\hat\mu((\mathrm{d}\varphi_K^t)_p)\\
  &=\hat{\mu}(p,H)-\hat\mu(p,K),
\end{aligned}
\end{equation}
where the second equality holds by the homotopy invariance of the Maslov index and the fact $\mathrm{Sp}(2n)$ is connected, see \cite[Theorem 2.2.12]{MS17}.

Assume we have verified that $\mu_+(p,H)=n+\hat{\mu}(p,H)-\hat\mu(p,K)$, then by \eqref{eq:HK} and \eqref{eq: shift m}, $ \mathrm{HF}_{\mu+}(H,p)\neq0$ implies that $ \mathrm{HF}_n(K,p)\neq0$. Thus, Theorem \ref{thm:vanish} is established for the totally degenerate case via \eqref{eq:KF}.

To complete the proof, we verify $\mu_+(p,H)=n+\hat{\mu}(p,H)-\hat\mu(p,K)$. In view of \eqref{eq:mean d}, this reduces to showing $\hat{\mu}(p,K)=0$.

Now we recall the construction of the  Hamiltonian $K$. (For more details, see \cite[Section 6]{Gi10}.)
Let $\widetilde{K}$ be a Hamiltonian defined in some neighborhood of $p$, satisfying 
\begin{equation}\label{eq:K~flow}
 \varphi_{\widetilde{K}}^t(z)-z=tX_{F}(\psi^t(z)),   
\end{equation}
where  \begin{equation*}
\psi^t(z)=(\text{$x$-component of $\varphi_{\widetilde{K}}^t(z)$},y).
\end{equation*}
Note that $\varphi_{\widetilde{K}}^1=\varphi$  and $\psi^1=\psi$.
 
Consider a monotone increasing smooth function  $\lambda:[0,1]\to[0,1]$ with $\lambda(t)\equiv0$ when $t$ is near 0 and $\lambda(t)\equiv1$ when $t$ is near 1. The Hamiltonian $K$ is given by
\begin{equation}\label{eq:K}
 K_t(z)=(1-\lambda'(t))F(z)+\lambda'(t)\widetilde{K}_{\lambda(t)}(\varphi_F^{\lambda(t)-t}(z)),
\end{equation}
where  $\varphi_F$ denotes the Hamiltonian flow of $F$. Consequently,
\begin{equation}\label{eq:K flow}
    \varphi_K^t=\varphi_F^{t-\lambda(t)}\varphi_{\widetilde{K}}^{\lambda(t)}.
\end{equation}
It is clearly that $\varphi_K^1=\varphi$.

Let $\tilde\Phi_t, \Phi_t\in \widetilde{\rm Sp}(2n)$ be the linearizations of $\varphi_{\widetilde{K}}^t$ and $\varphi_K^t$ at $p$ respectively. Denote by $Q$ the Hessian of $F$ at $p$ and let $X_Q$ be the linear Hamiltonian vector field of $Q$ on $V$. Linearizing \eqref{eq:F def}, \eqref{eq:K~flow} and \eqref{eq:K flow}, we see that
\begin{align}
    &\Phi-\mathrm{Id}=X_QP(\Phi),\label{eq:Phi}\\
    &\tilde\Phi_t=\mathrm{Id}+tX_QP(\tilde{\Phi}_t),\label{eq:Phi~}\\
    &\Phi_t=\exp((t-\lambda(t))X_Q)\tilde{\Phi}_{\lambda(t)},\label{eq:Phi t}
\end{align}
where $P(\Phi):V\to V $ is defined by 
\begin{equation*}
    \Phi:(x,y)\mapsto(\bar x,\bar y),~~ P(\Phi)(x,y):=(\bar x,y),
\end{equation*}
with respect to the decomposition $V=L\oplus L'$. ($P(\tilde\Phi_t)$ is defined analogously.)


\quad

\hypertarget{Claim 1}{\textbf{Claim 1:}} $\sigma(X_Q)=\{0\}$.

\begin{proof}[Proof of Claim 1] Representing \eqref{eq:Phi} as a block matrix relative to the decomposition  $V=L\oplus L'$, and noting that $\Phi(L)=L$, we have $\Phi=\begin{pmatrix}
    A & B\\
    0 & D
\end{pmatrix}$. Then \eqref{eq:Phi} is equivalent to
\begin{equation*}
\begin{pmatrix}
    A-\mathrm{Id} & B\\
    0       & D-\mathrm{Id}
\end{pmatrix}
=X_Q\begin{pmatrix}
    A & B\\
    0 & \mathrm{Id}
\end{pmatrix}.
\end{equation*}
This yields
\begin{equation}\label{eq:XQ}
    X_Q=\begin{pmatrix}
        \mathrm{Id}-A^{-1} & A^{-1}B\\
        0 & D-\mathrm{Id}
    \end{pmatrix},    
\end{equation}
where the invertibility of $A$ is guaranteed by the fact $\sigma(\Phi)=\{1\}$.  Therefore, 
\[\sigma(X_Q)=\sigma(\mathrm{Id}-A^{-1})\cup\sigma(D-\mathrm{Id})=\{0\},
\]
since $\sigma(\Phi)=\{1\}$.
\end{proof}

\hypertarget{Claim 2}{\textbf{Claim 2:}} $\hat{\mu}(\tilde{\Phi}_t)=0$.

\begin{proof}[Proof of Claim 2]
Expressing \eqref{eq:Phi~} in  block matrix form relative to $V=L\oplus L'$, let $\tilde\Phi_t=\begin{pmatrix}
    A(t) & B(t)\\
    C(t) & D(t)
\end{pmatrix}$. Then by \eqref{eq:XQ} and a direct computation, \eqref{eq:Phi~} becomes
\begin{align}
    \begin{pmatrix}
        A(t) & B(t)\\
        C(t) & D(t)
    \end{pmatrix}
    &=\begin{pmatrix}
        \mathrm{Id}& 0\\
        0 & \mathrm{Id}
    \end{pmatrix}+ t \begin{pmatrix}
        \mathrm{Id}-A^{-1} & A^{-1}B\\
        0 & D-\mathrm{Id}
    \end{pmatrix}
    \begin{pmatrix}
    A(t) & B(t)\\
    0 & \mathrm{Id}
\end{pmatrix}.\nonumber\\
&=\begin{pmatrix}
        \mathrm{Id}+t(\mathrm{Id}-A^{-1})A(t) & t(B(t)-A^{-1}B(t)+A^{-1}B)\\
        0 & tD+(1-t)\mathrm{Id}
    \end{pmatrix}.
\end{align}
This implies that
\begin{equation}\label{eq:ADC}
    \left\{\begin{aligned}
    &C(t)\equiv0,\\
    &D(t)=tD+(1-t)\mathrm{Id},\\
    &A(t)=[(1-t)\mathrm{Id}+tA^{-1}]^{-1}.
    \end{aligned}
    \right.
\end{equation}
Consequently,
\begin{equation}\label{eq:eigenvalue=1}
    \sigma(\tilde{\Phi}_t)=\sigma(A(t))\cup\sigma(D(t))\equiv\{1\},~~ \forall t\in[0,1],
\end{equation}
where the second equality uses \eqref{eq:ADC} and the fact $\sigma(D)=\{1\}=\sigma(A^{-1})$. It follows immediately that $\hat\mu(\tilde\Phi_t)=0$ from \eqref{eq:eigenvalue=1} and \hyperlink{list:MI6}{\text{(MI6)}} of Section \ref{subsec:index}.
\end{proof}

Combining \hyperlink{Claim 1}{Claim 1}, \hyperlink{Claim 2}{Claim 2}, \eqref{eq:homo loop} and \eqref{eq:Phi t}, we conclude $\hat\mu(p,K)=0$ and therefore Theorem \ref{thm:vanish} holds for the totally degenerate case.

\begin{remark}\label{rmk:attempt} Conjecturally, Theorem \ref{thm:vanish} holds without the condition $\mu_+(p,H)=d+n$.  Following the idea in this section, it sufficiently to prove this conjecture for totally degenerate case. By our discussion above, this reduces to find the pair $(K,F)$ in Lemma \ref{lem:KF} such that \[\hat\mu(p,K)=n+\hat{\mu}(p,H)-\mu_+(p,H),\]
which is equivalent to
\[\hat\mu(p,K)=n+S_{\Phi}^+(1)-\nu_1(\Phi), ~\text{where}~ \Phi=\mathrm{d}\varphi_p,\]
due to Corollary \ref{coro:precise mean ind}.
\end{remark}

\subsubsection{Particular case 3: split diffeomorphisms}\label{subsec:split}
Assume that $V=\mathbb{R}^{2n}$ is decomposed as a product of two symplectic vector spaces $V_0$ and $V_1$, $p=(p_0,p_1)$ in this decomposition, and the Hamiltonian $H$ is split, i,e., $H=H_0+H_1$ where $H_0$ and $H_1$ are Hamiltonians on $V_0$ and $V_1$ respectively, whose flows fix $p_0$ and $p_1$, correspondingly. Futhermore, assume that the time-1 map $\varphi_{H_0}$ of $H_0$ is non-degenerate at $p_0$ and the time-1 map $\varphi_{H_1}$ of $H_1$ is totally degenerate at $p_1$. 
 By \hyperlink{list:LF2}{\text{(LF2)}} of Section \ref{subsec:local Floer} we have
 \begin{equation}\label{eq:tensor}
     \mathrm{HF}_*(H,p)=\bigoplus_{*'+*''=*}\mathrm{HF}_{*'}(H_0,p_0)\otimes \mathrm{HF}_{*''}(H_1,p_1).
 \end{equation}
Since $\varphi_{H_0}$ is nondegenerate at $p_0$, From our particular case 1  we obtain
\begin{equation}\label{eq:H0p0}
    \mathrm{HF}_*(H_0,p_0)=\left\{ \begin{aligned}
     \mathbb{F}, & \ \ \ \ *=\mu(p_0,H_0),\\
     0,          & \ \ \ \ \text{otherwise.}
    \end{aligned}
    \right .
\end{equation} 
Thus $\mathrm{HF}_{\mu_+(p,H)}(H,p)\neq0$ implies that $\mathrm{HF}_{\mu_+(p_1,H_1)}(H_1,p_1)\neq0$, where \[\mu_+(p,H)=\mu(p_0,H_0)+\mu_+(p_1,H_1).\]  

To apply our particular case 2 to $\mathrm{HF}_*(H_1,p_1)$, we need to verify that 
\begin{equation}\label{eq:1/2dim V1}
    \mu_+(p_1,H_1)-\hat{\mu}(p_1,H_1)=\frac{1}{2}\dim V_1.
\end{equation}
By  \eqref{eq:bounded by mean}, 
\begin{align}
    \mu(p_0,H_0)-\hat{\mu}(p_0,H_0)&\leq\frac{1}{2}\dim V_0,\label{eq:<1/2dim V0}\\
    \mu_+(p_1,H_1)-\hat{\mu}(p_1,H_1)&\leq\frac{1}{2} \dim V_1.\nonumber
\end{align}
Observe that both $\mu_+(p_1,H_1)$ and $\hat{\mu}(p_1,H_1)$ are integers. Assuming for contradiction that
\begin{equation}\label{eq:1/2dim V1 -1}
    \mu_+(p_1,H_1)-\hat{\mu}(p_1,H_1)\leq\frac{1}{2} \dim V_1-1.
\end{equation}
 and combining \eqref{eq:1/2dim V1 -1}  with \eqref{eq:<1/2dim V0}, we see that
\begin{align*}
\mu_+(p,H)-\hat\mu(p,H)
&=\mu(p_0,H_0)+\mu_+(p_1,H_1)-(\hat{\mu}(p_0,H_0)+\hat{\mu}(p_1,H_1))\\
&\leq\frac{1}{2}(\dim V_0+\dim V_1)-1\\
&=n-1.
\end{align*}
which contradicts  the hypothesis $\mu_+(p,H)-d=n$ since $|d-\hat\mu(p,H)|<\frac{1}{2}$. This establishes \eqref{eq:1/2dim V1}.

According to our particular case 2,
\begin{equation}\label{eq:H1p1}
    \mathrm{HF}_*(H_1,p_1)=\left\{ \begin{aligned}
     \mathbb{F}, & \ \ \ \ *=\mu_+(p_1,H_1),\\
     0,          & \ \ \ \ \text{otherwise.}
    \end{aligned}
    \right .
\end{equation} 
Therefore Theorem \ref{thm:vanish} holds for split Hamiltonian $H$ by \eqref{eq:tensor}, \eqref{eq:H0p0} and \eqref{eq:H1p1}.

More generally, assume that the time-1 map $\varphi=\varphi_H$, but not necessarily $H$, is split, i.e., $\varphi=(\varphi_0,\varphi_1)$, where $\varphi_0$ and $\varphi_1$ are the germs of symplectomorphisms fixing $p_0$ in $V_0$ and, respectively, fixing $p_1$ in $V_1$. In fact both $\varphi_0$ and $\varphi_1$ are Hamiltonian, see \cite[Remark 4.1]{GG10}. Denote by $H_0$ and $H_1$ some Hamiltonians generating $\varphi_0$ and $\varphi_1$ respectively. As above, in addition we assume that the time-1 map $\varphi_0$ of $H_0$ is non-degenerate at $p_0$ and the time-1 map $\varphi_1$ of $H_1$ is totally degenerate at $p_1$.

Note that we do not necessarily have $H=H_0+H_1$, but both $H$ and $H_0+H_1$ generate the same time-1 map $\varphi$. Therefore, similarly with \eqref{eq:HK} and \eqref{eq: shift m}, by \hyperlink{list:LF3}{\text{(LF3)}} of Section \ref{subsec:local Floer} and \eqref{eq:differ by maslov}, we have
\begin{equation}\label{eq:H0+H1}
\left\{
\begin{aligned}
    &\mathrm{HF}_{*+m}(H,p)=\mathrm{HF}_{*}(H_0+H_1,p), \\ 
    &m=\hat\mu(p,H)-\hat\mu(p,H_0+H_1),\\
    &\mu_+(p,H)=\mu_{+}(p,H_0+H_1)+m,
\end{aligned}
\right.
\end{equation}
where $m$ is an even integer.
Consequently, $\mathrm{HF}_{\mu_+(p,H)}(H,p)\neq0$ implies
\begin{equation}\label{eq:HF H0+H1}
\mathrm{HF}_{\mu_+}(H_0+H_1,p)\neq 0,~\text{where $\mu_+=\mu_+(p,H_0+H_1)$.}  
\end{equation}
Moreover, $\mu_+(p,H)-d=n$ and $|d-\hat\mu(p,H)|<\frac{1}{2}$ combined with \eqref{eq:H0+H1}  yields
\begin{equation}\label{eq:ind H0+H1}
\mu_+(p,H_0+H_1)-(d-m)=n,~|d-m-\hat\mu(p,H_0+H_1)|<\frac{1}{2}.     
\end{equation}
Note that \eqref{eq:HF H0+H1} and \eqref{eq:ind H0+H1} precisely match the hypotheses in Theorem \ref{thm:vanish} for the split Hamiltonian $H_0+H_1$. From our previous discussion, we conclude that \[\mathrm{HF}_*(H_0+H_1,p)=0~~ \text{whenever $*\neq \mu_+(p,H_0+H_1)$}.\] Finally, by \eqref{eq:H0+H1}, we deduce that Theorem \ref{thm:vanish} remains valid for $H$ even when only its time-1 map is assumed to be split.


\subsubsection{The general case}\label{subsect:general case}
Let $\varphi=\varphi_H$ be the Hamiltonian diffeomorphsim fixing $p$ in $\mathbb{R}^{2n}$ and $\mathrm{d}\varphi_p$ denote the linearization of $\varphi$ at $p$. Choose a symplectic decomposition of $T_p\mathbb{R}^{2n}\simeq\mathbb{R}^{2n}=V_0\times V_1$ such that all eigenvalues of $\mathrm{d}\varphi_p|_{V_0}$ are different from 1 and all eigenvalues of $\mathrm{d}\varphi_p|_{V_1}$ are equal to 1. As in \cite[Page 343]{GG10}, one can construct a homotopy of Hamiltonian diffeomorphisms $\varphi_s$, $s\in[0,1]$ such that
\begin{itemize}
    \item $\varphi_0=\varphi$, and $\varphi_1$ is split with respect to the decomposition $\mathbb{R}^{2n}=V_0\times V_1$ as in Subsection \ref{subsec:split};
    \item $p$ is a uniformly isolated fixed point of $\varphi_s$, and $(\mathrm{d}\varphi_s)_p\equiv \mathrm{d}\varphi_p$, $\forall s\in[0,1]$.
\end{itemize}

Let $K_s$ be the Hamiltonian generating $\varphi_s$ as its time-1 map, obtained by concatenating the flow $\varphi_H^t$, $t\in[0,1]$ with the homotopy $\varphi_\zeta$, $\zeta\in[0,s]$, up to an obvious reparametrization. Then $K_0=H$ and $p$ is a uniformly isolated fixed point of $\varphi_{K_s}$ for all $s\in[0,1]$. Thus, by \hyperlink{list:LF1}{\text{(LF1)}} of Section \ref{subsec:local Floer}, we have
\begin{equation}\label{eq:H K1}
    \mathrm{HF}_*(H,p)=\mathrm{HF}_*(K_1,p).
\end{equation}
Moreover, since $(\mathrm{d}\varphi_s)_p$ is constant,  the homotopy invariance of the Maslov-type index yields
\begin{equation}\label{eq:ind H K1}
    \hat{\mu}(p,H)=\hat\mu(p,K_1),~~ \mu_+(p,H)=\mu_+(p,K_1).
\end{equation}
As $\varphi_1=\varphi_{K_1}$ is split, Theorem \ref{thm:vanish} applies to $K_1$ by Subsection \ref{subsec:split}. Through \eqref{eq:H K1} and \eqref{eq:ind H K1}, we conclude that Theorem \ref{thm:vanish} holds for $H$ as well.

  This concludes the proof of Theorem \ref{thm:vanish}.
\qed
\subsection{From local Floer homology to equivariant local symplectic homology}

\begin{proof}[Proof of Theorem \ref{thm:local max}]

 Assume $x=y^l$ is the $l$-th iteration of a prime closed Reeb orbit $y$. By \eqref{eq:SH split} and \eqref{eq:SH leq HF}, the condition $\mathrm{CH}_{\mu_+(x)}(x;\mathbb{Q})\neq0$ implies:
\begin{equation}\label{eq:SH mu+1}
\begin{aligned}
  1 &\leq \dim\mathrm{SH}_{\mu_+(x)+1}(y^l;\mathbb{Q}) \\
  &\leq \dim\mathrm{HF}_{\mu_+(x)+1}(\varphi_H^l,0;\mathbb{Q})+ \dim\mathrm{HF}_{\mu_+(x)}(\varphi_H^l,0;\mathbb{Q}),
  \end{aligned}
\end{equation}
where $\varphi_H$ is the Poincaré return map of $y$ defined on $B^{2n-2}$. 
From Remark \ref{rmk:same ind}, we have:
\[
\mu_+(0,\varphi_H^l) = \mu_+(x) \quad \text{and} \quad \hat\mu(0,\varphi_H^l) = \hat\mu(x).
\]

By \hyperlink{list:LF4}{(LF4)} of Section \ref{subsec:local Floer}, we obtain
\[
\dim\mathrm{HF}_{\mu_+(x)+1}(\varphi_H^l,0;\mathbb{Q}) = 0,
\]
which combined with \eqref{eq:SH mu+1} leads to
\begin{equation}\label{eq:HF geq1}
\dim\mathrm{HF}_{\mu_+(x)}(\varphi_H^l,0;\mathbb{Q}) \geq 1.
\end{equation}
Under the assumptions of Theorem \ref{thm:local max}, we have:
\begin{equation}\label{eq:mu+ 0}
\mu_+(0,\varphi_H^l) = d + n - 1, \quad \text{where} \quad |d - \hat{\mu}(0,\varphi_H^l)| < \tfrac{1}{2}.
\end{equation}
Since \eqref{eq:HF geq1} and \eqref{eq:mu+ 0} satisfy the conditions of Theorem \ref{thm:vanish} on $B^{2n-2}$, we conclude
\begin{equation}\label{eq:l vanish}
\mathrm{HF}_*(\varphi_H^l,0;\mathbb{Q}) = 0 \quad \text{for all $*\neq\mu_+(x)$}.
\end{equation}

Furthermore, from \eqref{eq:SH split} and \eqref{eq:SH leq HF}, we obtain:
\begin{align*}
0 &\leq \dim\mathrm{CH}_{*}(y^l;\mathbb{Q}) + \dim\mathrm{CH}_{*-1}(y^l;\mathbb{Q}) \\
&\leq \dim\mathrm{HF}_{*}(\varphi_H^l,0;\mathbb{Q}) + \dim\mathrm{HF}_{*-1}(\varphi_H^l,0;\mathbb{Q}),
\end{align*}
which implies
\[
\mathrm{CH}_{*}(x;\mathbb{Q}) = 0 \quad \text{for all $*\neq\mu_+(x)$}.
\]

For the additional part, assume both $x=z^k$ and $z^{k'}$ are admissible iterations of $z = y^m$, $l=mk$.    By \cite[Theorem 1.1]{GG10}, we have isomorphism
\begin{equation}\label{eq:GG10}
\mathrm{HF}_{*+s}(\varphi_H^{l},0;\mathbb{Q}) = \mathrm{HF}_*(\varphi^{l'}_H,0;\mathbb{Q}),
\end{equation}
where $l' = mk'$ and $s=\mu_+(x)-\mu_+(z^{k'})$.

Combining \eqref{eq:l vanish} with \eqref{eq:GG10}  yields
\[
\mathrm{HF}_*(\varphi^{l'}_H,0;\mathbb{Q}) = 0 \quad \text{for all $*\neq\mu_+(z^{k'})$}.
\]

Finally, applying \eqref{eq:SH split} and \eqref{eq:SH leq HF} again we get
\[
\mathrm{CH}_{*}(z^{k'};\mathbb{Q}) = 0 \quad \text{for all $*\neq\mu_+(z^{k'})$}.
\]
\end{proof}

\begin{remark}\label{rmk:explicit shift}
 The explicit expression of the shift $s$ in \eqref{eq:GG10}, while not stated in the original formulation of \cite[Theorem 1.1]{GG10}, can be extracted from the proof in \cite[Section 4]{GG10}. For clarity, we present the detailed explanation below. Our goal is to show that:

\vspace{1mm}
{\it Assume $z$ is an isolated closed Reeb orbit and  $z^k$ is an admissible iteration. Let  $\varphi_H$ be the Poincar\'e return map of $z$, then there holds isomorphism
\begin{equation*}
\mathrm{HF}_{*+s}(\varphi_H^{k},0;\mathbb{Q}) = \mathrm{HF}_*(\varphi_H,0;\mathbb{Q}),~~\text{where $s=\mu_+(z^k)-\mu_+(z)$.} 
\end{equation*}
}

\begin{proof}
Recall the Bott-type formula for nullity (cf.\cite[Theorem 9.2.1]{Long02}):
\begin{equation*}
\nu(z^k)=\sum_{\omega^k=1}\nu_\omega(z),
\end{equation*}
where the $\omega$-nullity $\nu_\omega(\cdot)$ of a orbit is defined in Remark \ref{rmk: nullity for orbit}.
Since $z^k$ is an admissbile iteration, we have $\nu(z^k)=\nu(z)$, and consequently by \eqref{eq:+-nullity},
\begin{equation}\label{eq:+- diff equal}
  \mu_+(z^k)-\mu_+(z)=\mu_-(z^k)-\mu_-(z).
\end{equation}

Now we divide the proof into three cases:

  {\textbf{Case 1:}} {\it $z$ is non-degenerate.}
 In this case, both $z^k$ and $z$ are non-degenerate, hence we apply Example \ref{exam:non degen HF} to get 
\begin{equation*}
  s=\mu(0,\varphi_H^k)-\mu(0,\varphi_H)=\mu(z^k)-\mu(z),
\end{equation*} 
where the second equality holds by Remark \ref{rmk:same ind}.

  {\textbf{Case 2:}} {\it $z$ is totally degenerate.}
According to \cite[Page 339]{GG10}, in this case
\begin{align*}
  s=\hat\mu(0,\varphi_H^k)-\hat\mu(0,\varphi_H)
  =\mu_-(0,\varphi_H^k)-\mu_-(0,\varphi_H)
  =\mu_-(z^k)-\mu_-(z),
\end{align*} 
where the second equality follows from Proposition \ref{prop:precise} and Corollary \ref{coro:precise mean ind}. 

{\textbf{Case 3:}} {\it general case} As in \cite[Section 4.5]{GG10},  choose a symplectic decomposition of $T_0B^{2n-2}\simeq\mathbb{R}^{2n-2}=V_0\times V_1$ such that all eigenvalues of $(\mathrm{d}\varphi_H)_0|_{V_0}$ are different from 1 and all eigenvalues of $(\mathrm{d}\varphi_H)_0|_{V_1}$ are equal to 1, then one can find a Hamiltonian $K$ with time-1 map $\varphi_K$ defined in a neighborhood of origin in $\mathbb{R}^{2n-2}$ such that:

\begin{itemize} 
  \item There exists a homotopy of Hamiltonian diffeomorphisms $\varphi_s$, $s\in[0,1]$, such that $\varphi_0=\varphi_H$, $\varphi_1=\varphi_K$ and $(\mathrm{d}\varphi_s)_0\equiv (\mathrm{d}\varphi_H)_0$;
  \item  $\varphi_K$ is split as $\varphi_K=(\varphi_{K_0},\varphi_{K_1})$, where $\varphi_{K_0}$ ($\varphi_{K_1}$ resp.) denotes the time-1 map of the Hamiltonian $K_0$ on $V_0$ ($K_1$ on $V_1$ resp.) defined in a neighborhood of the origin; 
  \item  $\hat\mu(0,\varphi_H)=\hat\mu(0,\varphi_K)$, and for each admissible iteration $z^k$, there holds isomorphism
  \[\mathrm{HF}_*(\varphi_H^k,0;\mathbb{Q})=\mathrm{HF}_*(\varphi_K^k,0;\mathbb{Q}).\]
\end{itemize}  
Following Subsection \ref{subsec:split}, by \eqref{eq:H0+H1} and \hyperlink{list:MI4}{(MI4)} of Section \ref{subsec:index}, we have  
\begin{align}
  &\mathrm{HF}_{*+m}(\varphi_K,0;\mathbb{Q})=\mathrm{HF}_*(\varphi_{K_0+K_1},0;\mathbb{Q}),~~~~m=\hat\mu(0,\varphi_K)-\hat\mu(0,\varphi_{K_0+K_1}),\label{eq:shift m}\\ 
  &\mathrm{HF}_{*+{m_k}}(\varphi_K^k,0;\mathbb{Q})=\mathrm{HF}_*(\varphi^k_{K_0+K_1},0;\mathbb{Q}),~~~~m_k=km, \label{eq:shift mk}
\end{align}
where $m$ is even.  
Note that the flow $\varphi^t_{(K_0+K_1)^{\#k}}=(\varphi^t_{K_0^{\#k}},\varphi^t_{K_1^{\#k}})$ is split, according to \hyperlink{list:LF2}{(LF2)} of Section \ref{subsec:local Floer}, we have
\begin{equation}\label{eq:split iterate}
\begin{aligned}
  \mathrm{HF}_*(\varphi_{K_0+K_1},0;\mathbb{Q})
  &=\bigoplus_{*'+*''=*}\mathrm{HF}_{*'}(\varphi_{K_0},0;\mathbb{Q})\otimes\mathrm{HF}_{*''}(\varphi_{K_1},0;\mathbb{Q})\\
  &=\bigoplus_{*'+*''=*}\mathrm{HF}_{*'+s_0}(\varphi_{K_0}^k,0;\mathbb{Q})\otimes\mathrm{HF}_{*''+s_1}(\varphi_{K_1}^k,0;\mathbb{Q})\\
  &=\mathrm{HF}_{*+s_0+s_1}(\varphi_{K_0+K_1}^k,0;\mathbb{Q}),
  \end{aligned}
\end{equation}
where
\begin{equation}\label{eq:shift s0 s1}
  s_0=\mu(0,\varphi_{K_0}^k)-\mu(0,\varphi_{K_0})~~\text{and $s_1=\hat\mu(0,\varphi_{K_1}^k)-\hat\mu(0,\varphi_{K_1})$}
\end{equation}
follow from Case 1 and Case 2. Summarizing \eqref{eq:shift m}-\eqref{eq:shift s0 s1} yields
\begin{equation}\label{eq:s 1}
\begin{aligned}
s&=m_k+s_0+s_1-m\\
&=(k-1)[(\hat\mu(0,\varphi_K)-\hat\mu(0,\varphi_{K_0+K_1})+\hat\mu(0,\varphi_{K_1})]
+\mu(0,\varphi_{K_0}^k)-\mu(0,\varphi_{K_0}).
\end{aligned}
\end{equation}

Let $\Phi\in\widetilde{\mathrm{Sp}}(2n-2)$ denote the associated symplectic path of $z$ as in Subsection \ref{subsect:index for orbits}. By Remark \ref{rmk:same ind} and the fact that $\varphi_K$ is homotopic to $\varphi_H$ by $\varphi_s$ with fixed $(\mathrm{d}\varphi_s)_0$, we conclude that
\begin{equation*}
 \hat\mu(0,\varphi_K)=\hat\mu(\Phi).
\end{equation*}
Let $n_0:=\dim V_0$, $n_1:=\dim V_1$, $\Psi_0\in\widetilde{\mathrm{Sp}}(n_0)$ and $\Psi_1\in\widetilde{\mathrm{Sp}}(n_1)$ denote the symplectic paths given by  $(\mathrm{d}\varphi_{K_0}^t)_0$ and $(\mathrm{d}\varphi_{K_1}^t)_0$, $t\in[0,1]$ respectively.  Note that $\Psi_0$ is non-degenerate while $\Psi_1$ is totally degenerate, and 
\begin{equation*}
  \hat\mu(0,\varphi_{K_0+K_1})=\hat\mu(\Psi_0)+\hat\mu(\Psi_1),~~\mu(0,\varphi_{K_0})=\mu(\Psi_0),~~ \hat\mu(0,\varphi_{K_1})=\hat\mu(\Psi_1).
\end{equation*}
Then \eqref{eq:s 1} can be rewritten as
\begin{equation}\label{eq:s 2}
  s=(k-1)[(\hat\mu(\Phi)-\hat\mu(\Psi_0)]+\mu(\Psi_0^k)-\mu(\Psi_0).
\end{equation}

 According to \eqref{eq:differ by maslov}, we can modify $\Psi_0$  by concatenating at the end point a symplectic loop $\psi$ with Maslov index $(\hat\mu(\Phi)-\hat\mu(\Psi_0))/2$ (such a loop can be constructed directly by rotation) so that
the resulting path $\Psi$ satisfying
\begin{equation}\label{eq:nondegenerate part}
\left\{
\begin{aligned}
&\hat\mu(\Psi)=\hat\mu(\Phi),\\
&\Phi(1)=\Psi(1)\diamond \Psi_1(1),
\end{aligned}
\right.
\end{equation}
Moreover, 
\begin{align}
&\mu(\Psi)-\mu(\Psi_0)=\hat\mu(\Psi)-\hat\mu(\Psi_0),\label{eq:Psi Psi0}\\
&\mu(\Psi^k)-\mu(\Psi_0^k)=k (\hat\mu(\Psi)-\hat\mu(\Psi_0)),\label{eq:k Psi Psi0}            
\end{align}
note that path $\Psi^k$ can be viewed as $\Psi_0^k$ concatenated with the symplectic loop $\psi_i:=\Psi_0^{i-1}(1)\psi$  at the point $\Psi^i_0(1)$ for $i=1,\dots,k$, and all these $k$ loops have the same Maslov index with $\psi$.  Substituting \eqref{eq:nondegenerate part}-\eqref{eq:k Psi Psi0} into \eqref{eq:s 2} yields
\begin{equation}
  s=\mu(\Psi^k)-\mu(\Psi).
\end{equation}

According to Lemma \ref{lem:prop  split} and Corollary \ref{coro:precise mean ind}, \eqref{eq:nondegenerate part} implies 
\begin{equation*}
  \mu(\Psi)=\mu_-(\Phi)+S_{\Phi(1)}^+(1),
\end{equation*}
which yields 
\begin{equation*}
  s=\mu_-(\Phi^k) - \mu_-(\Phi)=\mu_-(z^k) - \mu_-(z)
\end{equation*}
by Proposition \ref{prop:precise}.

Summarizing these cases and \eqref{eq:+- diff equal}, we conclude that 
\begin{equation}\label{eq:degree shift}
  s=\mu_-(z^k) - \mu_-(z)=\mu_+(z^k)-\mu_+(z).
\end{equation}
\end{proof}
\end{remark}

\begin{remark}\label{rmk:nondeg part}
  Let $\Phi\in\widetilde{\mathrm{Sp}}(2n)$. In \cite[Section 3.1.1]{CGG24}, a non-degenerate $\Psi\in \widetilde{\mathrm{Sp}}(2m)$ with $0\leq m \leq n$ satisfying \eqref{eq:nondegenerate part} is called the {\it non-degenerate part} of $\Phi$,  i.e., 
 \begin{equation*}
\left\{
\begin{aligned}
&\hat\mu(\Psi)=\hat\mu(\Phi),\\
&\Psi(1)\diamond A=\Phi(1), ~\text{for some $A\in\mathrm{Sp}(2n-2m)$ with $\sigma(A)=\{1\}$}.
\end{aligned}
\right.
\end{equation*}
Such $\Psi$ is uniquely determined as elements in $\Psi\in \widetilde{\mathrm{Sp}}(2m)$ since its Conley-Zehnder index  and end point are determined.
For instance, the non-degenerate part of $\Phi$ is given by $\Psi=\Phi$ if $\Phi$ is non-degenerate, and is trivial if $\Phi$ is totally degenerate. As already mentioned in the above remark, Proposition \ref{prop:precise} together with Corollary \ref{coro:precise mean ind} yields
\begin{equation*}
  \mu(\Psi^k)-\mu(\Psi)=\mu_-(\Phi^k)-\mu_-(\Phi),~~\forall k\in\mathbb{N}. 
\end{equation*}
whenever $\Psi$ is the non-degenerate part of $\Phi$.
\end{remark}

\section{Two irrationally elliptic closed Reeb orbits}\label{sec:two irra}

\subsection{Common index jump intervals}\label{subsec:ind interval}
Let $\Sigma^{2n-1}$ be the boundary of a star-shaped domain $W\subset\mathbb{R}^{2n}$. Assume that

\begin{description}
    \item[\hypertarget{list:DF}{\textbf{(DF)}}] {\it the Reeb flow on $\Sigma^{2n-1}$ is dynamically convex and has finitely many prime closed orbits.} 
\end{description}                                                                                                                                       
Under this assumption,  \c{C}ineli-Ginzburg-G\"urel proved in \cite[Theorem D]{CGG24} that for each isolated closed Reeb orbit $x$ (not necessarily prime) on $\Sigma$, its equivariant local symplectic homology $\mathrm{CH}_*(x;\mathbb{Q})$ satisfies
\begin{equation*}
    \dim \mathrm{CH}_*(x;\mathbb{Q})\leq 1.
\end{equation*}
Denote by $\mathcal{P}$ the set of all $\mathbb{Q}$-visible isolated closed Reeb orbits. Then for each $x\in\mathcal{P}$, there exists some $h\in\mathbb{Z}$ such that 
\begin{equation}\label{eq:deg}
\mathrm{CH}_*(x;\mathbb{Q})=\left\{
\begin{aligned}
  \mathbb{Q},~~~~&*=h,\\
 0,~~~~&\text{otherwise}.
\end{aligned}
\right.    
\end{equation}

We call $h$ from \eqref{eq:deg} {\it the degree of $x$}, denoted by $\deg(x)$. By \eqref{eq:supp CH}, the degree is bounded by the Maslov-type indices, namely,
\begin{equation}\label{eq:deg bound}
    \deg(x)\in[\mu_-(x),\mu_+(x)]\subset[\hat\mu(x)-n+1,\hat\mu(x)+n-1].
\end{equation}
Furthermore, the degree map has the following bijective property:
\begin{theorem}[ {\cite[Theorem E]{CGG24}}]\label{thm:bijection}
 Assume \hyperlink{list:DF}{\textbf{(DF)}} holds, then $\mathcal{P}$ has the following properties:
 \begin{itemize}
     \item[\hypertarget{list:O1}{(O1)}] The map $\deg:\mathcal{P}\to\mathbb{Z}$ is a bijection onto $n-1+2\mathbb{N}=\{n+1,n+3,\dots\}$, and orbits with larger degree have larger period, i.e. $\deg(x)>\deg(y)$ implies $\mathcal{A}(x)>\mathcal{A}(y)$ for each $x$, $y\in\mathcal{P}$. 
     \item[\hypertarget{list:O2}{(O2)}] All closed orbits $x\in\mathcal{P}$ have the same ration $\mathcal{A}(x)/\hat\mu(x)$.
 \end{itemize}
\end{theorem}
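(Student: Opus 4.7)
My plan is to derive Theorem \ref{thm:bijection} from the global positive $S^1$-equivariant symplectic homology $\mathrm{CH}^+_*(W;\mathbb{Q})$ of the filling $W$, via an action filtration that relates the global homology to the local contributions $\mathrm{CH}_*(x;\mathbb{Q})$. Since $W$ is star-shaped, $\mathrm{CH}^+_*(W;\mathbb{Q})$ is a symplectic invariant and agrees with that of the round ball, which is $\mathbb{Q}$ in each degree of $n-1+2\mathbb{N}=\{n+1,n+3,\dots\}$ and zero otherwise. Filtering the underlying chain complex by action yields a spectral sequence whose $E^1$-page is $\bigoplus_{x}\mathrm{CH}_*(x;\mathbb{Q})$, summed over isolated closed Reeb orbits, and which converges to $\mathrm{CH}^+_*(W;\mathbb{Q})$.

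To establish (O1), first observe that dynamical convexity combined with \eqref{eq:deg bound} forces $\deg(x)\geq \mu_-(x)\geq n+1$ for every $x\in\mathcal{P}$. Moreover, $\mathrm{CH}^+_*(W;\mathbb{Q})$ is supported in a single parity, so all local contributions $\mathrm{CH}_{\deg(x)}(x;\mathbb{Q})$ must sit in degrees of that same parity; since the spectral-sequence differentials shift degree by $-1$, every $d^r$ for $r\geq 1$ vanishes for parity reasons and the sequence collapses at $E^1$. Combining Theorem D of \cite{CGG24} (giving $\dim\mathrm{CH}_*(x;\mathbb{Q})\leq 1$) with the fact that $\mathrm{CH}^+_*(W;\mathbb{Q})$ is one-dimensional in each prescribed degree, the map $\deg\colon\mathcal{P}\to n-1+2\mathbb{N}$ must be bijective. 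The ordering of the filtration then identifies larger degree with later filtration level, i.e.\ larger action, yielding the monotonicity assertion.

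For (O2) I would apply the common index jump theorem (Theorem \ref{thm:CIJT} with Proposition \ref{prop:CIJT}) to the finitely many prime closed orbits. Assume for contradiction that two prime orbits $z_1$ and $z_2$ have distinct ratios $\mathcal{A}(z_i)/\hat{\mu}(z_i)$. Choosing a common index jump event with $d_j$ sufficiently large produces iterations $z_1^{k_{1j}}$ and $z_2^{k_{2j}}$ whose degrees both concentrate in the common window around $d_j+n-1$; the index estimates \eqref{eq:CIJT low}--\eqref{eq:CIJT up} together with Lemma \ref{lem:euler} (securing $\mathbb{Q}$-visibility of odd admissible iterates with nonzero equivariant Euler characteristic) and Theorem \ref{thm:local max} (pinning the degree to $\mu_+$) force more $\mathbb{Q}$-visible orbits into that window than the one-orbit-per-degree count permitted by (O1), a contradiction. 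The principal obstacle is this second part: it demands careful bookkeeping of admissibility and bad iterates, alignment of the exact positions of $\mu_\pm$ relative to $d_j$ via the splitting-number formula \eqref{eq:precise}, and matching these against the rigid bijection from (O1). The analogous issue for (O1) is verifying the convergence and grading of the action spectral sequence when several orbits happen to share an action level, but this is routine once dynamical convexity provides the parity control that kills the higher differentials.
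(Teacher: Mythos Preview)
The paper does not prove Theorem~\ref{thm:bijection}; it is quoted verbatim from \cite[Theorem~E]{CGG24} and used as a black box throughout Section~\ref{sec:two irra}. So there is no ``paper's own proof'' to compare against, and your proposal should be assessed on its own terms.

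On those terms, the argument for (O1) has a genuine gap. You write that because $\mathrm{CH}^+_*(W;\mathbb{Q})$ is supported in a single parity, ``all local contributions $\mathrm{CH}_{\deg(x)}(x;\mathbb{Q})$ must sit in degrees of that same parity; since the spectral-sequence differentials shift degree by $-1$, every $d^r$ vanishes for parity reasons.'' This is circular: parity of the $E^1$-page is what you would need in order to kill the differentials, but the one-parity support of $E^\infty$ does not by itself rule out cancellation between opposite-parity classes on $E^1$. Nothing you have said excludes a pair of $\mathbb{Q}$-visible orbits $x,y$ with $\deg(y)=\deg(x)-1$ and $\mathcal{A}(y)>\mathcal{A}(x)$ that annihilate each other in the spectral sequence. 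The actual mechanism in \cite{CGG24} is a Lusternik--Schnirelmann/spectral-invariant argument (strict monotonicity of the action selectors $c_k$ in the dynamically convex case, combined with $\dim\mathrm{CH}_*(x;\mathbb{Q})\le 1$), not a naked parity count; you need something of that strength to force collapse.

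Your sketch for (O2) is also problematic. You propose to derive a contradiction from the common index jump theorem together with Theorem~\ref{thm:local max}, but the contradiction you describe---``more $\mathbb{Q}$-visible orbits into that window than the one-orbit-per-degree count permitted by (O1)''---does not follow from unequal ratios $\mathcal{A}(z_i)/\hat\mu(z_i)$ alone: the CIJT event places one iterate $z_i^{k_i}$ per prime orbit into the index window regardless of those ratios, so the count in $\mathcal{I}$ is unaffected. The actual argument (again in \cite{CGG24}) proceeds by showing that the spectral invariants $c_k$ satisfy $c_k/k\to\text{const}$, and then reading off $\mathcal{A}(x)/\hat\mu(x)=\text{const}$ from the fact that each $x\in\mathcal{P}$ carries some $c_k$ with $|\hat\mu(x)-2k|\le 2(n-1)$. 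Invoking Theorem~\ref{thm:local max} here is unnecessary and somewhat backwards relative to the logical flow of the present paper, which uses (O2) as an input rather than an output.
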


Let $x_0\in\mathcal{P}$ be the orbit with the least action.  \hyperlink{list:O1}{(O1)} implies $\deg(x_0)=n+1$. If $x_0=z^k$ with $k\geq 2$, by Lemma \ref{lem:ind mono} and dynamical convexity we have
\[
\mu_-(x_0)=\mu_-(z^k)>n+1=\deg(x_0),
\]
which contradicts \eqref{eq:deg bound}. Thus,  $x_0$ must be prime and $\mu_-(x_0)=n+1=\deg(x_0)$.

Let  $\{x_0,\dots,x_r\}$ be all the prime closed orbits which are {\it eventually $\mathbb{Q}$-visible}, i.e., $x_i^k\in\mathcal{P}$ for some $k\in\mathbb{N}$. 
Note that the orbits $x_{i\geq1}$ might be $\mathbb{Q}$-invisible and/or have action below $\mathcal{A}(x_0)$. For each $i\geq1$ we define $s_i$ by 
\begin{equation}\label{eq:si}
    s_i:=\left\{
    \begin{aligned}
        &\max\{s\in\mathbb{N}:\mathcal{A}(x_i^s)\leq\mathcal{A}(x_0)\},~~~~ &\text{if $\mathcal{A}(x_i)\leq\mathcal{A}(x_0)$;}\\
        &0, &\text{if $\mathcal{A}(x_i)>\mathcal{A}(x_0).$}
    \end{aligned}
    \right.
\end{equation}

Consider common index jump events $\{k_{ij},d_j\}_{j=1}^{\infty}$ as in Theorem \ref{thm:CIJT}, where we require $\eta>0$ to be sufficiently small, all $d_j$ and $k_{ij}$ are divisible by $N\in\mathbb{N}$ specified as follows. Let \[
p:=\mathrm{lcm}\{k\in\mathbb{N}: \text{$\exists~0\leq i\leq r$, $\lambda\in\mathbb{C}$, s.t. $\lambda\in\sigma(x_i)$, $\lambda^k=1$  }\},
\] 
where $\sigma(x_i)$ denote the set of all eigenvalues of the linearized Poincar\'e map of $x_i$, and $\mathrm{lcm}\{\cdot\}$ denotes the least common multiple of the numbers in the set. Then we let
\begin{equation}\label{eq:N}
    N=2p\prod_{i=1}^{r} s_i!
\end{equation}
where, as usual, we set $0!=1$.

Concentrating on one event, we drop $j$ from the notation to get $d=d_j$ and $k_i=k_{ij}$. The $\mathbb{Q}$-visible closed orbits $x_j^k$ are divided into three collections:
\begin{itemize}
    \item $\Gamma_+$ formed by the $\mathbb{Q}$-visible orbits $x_i^{k>k_i}$. Then $\mu_-(x_i^k)\geq d+n+1$ by \eqref{eq:CIJT low} with $m=n-1$, and hence $\deg(x_i^k)\geq d+n+1$ by \eqref{eq:deg bound}.
    \item $\Gamma_0$ comprising the $r+1$ $\mathbb{Q}$-visible orbits $x_i^{k_i}$. Then by  \eqref{eq:bounded by mean} with $m=n-1$ and \eqref{eq:IR1},
    \[
    d-n+1\leq \mu_-(x_i^{k_i})\leq\mu_+(x_i^{k_i})\leq d+n-1,
    \]
    and hence 
    \begin{equation}\label{eq:ind interval}
        \deg(x_i^{k_i})\in\mathcal{I}:=[d-n+1,d+n-1]\subset \mathbb{N}
    \end{equation}
    by \eqref{eq:deg bound}.
    \item $\Gamma_-$ formed by the $\mathbb{Q}$-visible orbits $x_i^{k<k_i}$. Then $\mu_+(x_i^k)\leq d-2$ by \eqref{eq:CIJT up}, and hence $\deg(x_i^k)\leq d-2$.
\end{itemize}
 For each common index jump event,  the interval $\mathcal{I}$ in \eqref{eq:ind interval} is called {\it the associated  common index jump interval}. 
 
In fact, it is shown in \cite[Section 4.1.1]{CGG24}  that the orbits from $\Gamma_-$ must have a degree less than $d-n-1$, thus cannot have a degree in $\mathcal{I}$.  (For the reader's convenience we will briefly recall the proof for this argument later. The construction of $N$ in \eqref{eq:N} is essentially used there.) Consequently, the only closed orbits with degree in $\mathcal{I}$ must be from $\Gamma_0$. Note that there are exactly $n$ integer spots in $\mathcal{I}$ of parity $n+1$, which must all be filled by $\deg(x_i^{k_i})$ when $d$ is sufficiently large; see \hyperlink{list:O1}{(O1)} in Theorem \ref{thm:bijection}.  This implies that $r+1\geq n$.

Now we recall the sketch of the proof for
\begin{equation}\label{eq:deg leq d-n}
\deg(x_i^k)\leq d-n-1 ~~ \text{\it when $k<k_i$ and $x_i^k\in\mathcal{P}$}.
\end{equation}
(The terminology has been adjusted to maintain notational consistency throughout this article,  which leads to a slight simplification of the original proof in \cite{CGG24}.)
\begin{proof}
Start with $x=x_0$ and $k=k_0-1$. Recall that  $x$ is prime with $\mu_-(x)=\deg(x)=n+1$. By construction of $N$ we see $x^k$ is an admissible iteration and $k$ is odd, which implies $x^k\in \mathcal{P}$ by 
\begin{equation*}
  \chi^{\rm eq}(x^k)=\chi^{\rm eq}(x)=(-1)^{n+1}\neq 0,
\end{equation*}
which follows from Lemma \ref{lem:euler}.  
Since $x$ is prime, by \eqref{eq:SH split} and \eqref{eq:SH prime} one can compute $\deg(x^k)$ through the local Floer homology of its Poincar\'e return map to get
\begin{equation}\label{eq:deg xk}
\deg(x^k)=\deg(x)+\mu_+(x^k)-\mu_+(x),
\end{equation}
 see \cite[Lemma 4.2]{CGG24}, Remark \ref{rmk:explicit shift} and Remark \ref{rmk:nondeg part}. Applying \eqref{eq:IR3} of Theorem \ref{thm:CIJT} to \eqref{eq:deg xk} yields
 \begin{equation}
   \begin{aligned}
   \deg(x^k)
   &=\deg(x)+d-\mu_-(x)-2S^+_x(1)+\nu(x)-\mu_+(x)\\
   &=d-n-1-2S^+_x(1)\\
   &\leq d-n-1,
   \end{aligned}
 \end{equation}
where the inequality holds by Lemma \ref{lem:prop  split}.

For $k'<k$ we have $\mathcal{A}(x^{k'})<\mathcal{A}(x^k)$, which leads to $\deg(x^{k'})<\deg(x^k)\leq d-n-1$ by Theorem \ref{thm:bijection}, provided $x^{k'}\in\mathcal{P}$.

Finally we turn to the case $x_{i}^{k_i-s}$ with $i\geq1$ and $s>0$. Fix $i$. Note that \eqref{eq:deg leq d-n} is an immediately consequence of the following claim:

\vspace{1mm}
\textbf{Claim}: {\it $\deg(x_i^{k_i-s})>d-n$ for some $s>0$ implies  $x_i^{k_i-s}\notin \mathcal{P}$.}
\vspace{1mm}

Indeed, by \hyperlink{list:O1}{(O1)} of Theorem \ref{thm:bijection} and the fact $\deg(x_i^{k_i-s})> d-n-1\geq\deg{x_0^{k_0-1}}$, we have 
\begin{equation}\label{eq:action>}
  \mathcal{A}(x_i^{k_i-s})>\mathcal{A}(x_0^{k_0-1}).
\end{equation}
Combining \eqref{eq:IR1} of Theorem \ref{thm:CIJT} with \hyperlink{list:O2}{(O2)} of Theorem \ref{thm:bijection} yields
\begin{equation}\label{eq:action close}
  |\mathcal{A}(x_i^{k_i})-\mathcal{A}(x_0^{k_0})|<\mathrm{const} \cdot \eta. 
\end{equation}
Provied $\eta>0$ sufficiently small, \eqref{eq:action>} together with \eqref{eq:action close} implies $\mathcal{A}(x_i^s)<\mathcal{A}(x_0)$. Then  by the definition of $x_0$, $x_i^s\notin\mathcal{P}$. Consequently $\chi^{eq}(x_i^s)=0$. Moreover, $s\leq s_i$ with $s_i$ defined in \eqref{eq:si}, hence $k_i$ is divisible by $s$ by the construction of $N$. Set $k_i'=k_i/s$. Then $x_i^{k_i-s}=(x_i^s)^{k_i'-1}$ is an admissible odd iterate of $x_i^s$. Then by Lemma \ref{lem:euler} we have
\begin{equation*}
  \chi^{\rm eq}(x_i^{k_i-s})=\chi^{\rm eq}(x_i^s)=0,
\end{equation*} 
which implies $x_i^{k_i-s}\notin \mathcal{P}$ by \eqref{eq:deg}.
\end{proof}

\subsection{Proof of two irrationally elliptic closed Reeb orbits}\label{subsect:proof}
From now on, we assume that 

\begin{description}
    \item[\hypertarget{list:DF'}{\textbf{(DF')}}] {\it the Reeb flow on $\Sigma^{2n-1}$ is dynamically convex and has exactly $n$ prime closed orbits.} 
\end{description}
Denote by $\{y_1,\dots,y_n\}$ these $n$ prime closed Reeb orbits. By our discussion in  the previous subsection, for each common index jump event $\{k_i,d\}$ with $d$ sufficiently large, the $n$ integer spots of parity $n+1$ in the common index jump interval $\mathcal{I}$, must be filled by distinct values of $\deg(y_i^{k_i})$ for $1\leq i \leq n$. (This implies that all the $n$ prime closed Reeb orbits are eventually $\mathbb{Q}$-visible! )

Furthermore,  by the same proof of  \cite[Lemma 4.4]{Wang22},  the mean indices of closed Reeb orbits in the common index jump interval have the following commutative property:
\begin{proposition} \label{prop:commut}
There exists two common index jump events $\{k_{i},d\}$ and $\{k'_{i},d'\}$ such that
for any  integers $1\leq i,j\leq n$ and $i\neq j$,
we have 
\[
\hat\mu(y_i^{k_i})<\hat\mu (y_j^{k_j}), ~ \text{while $\hat\mu(y_i^{k'_i})>\hat\mu(y_j^{k'_j})$},
\]
i.e., the ordering of mean indices of any two closed Reeb orbits with degree in the common index jump intervals gets
interchanged. 
\end{proposition}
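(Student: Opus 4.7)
The strategy is to trace through the construction of common index jump events via Kronecker's simultaneous Diophantine approximation, which is the engine behind Theorem \ref{thm:CIJT}, and then exhibit a symmetry that pairs events with opposite deviation vectors. Set $\hat\mu_i := \hat\mu(y_i) > 0$ and $v := (\hat\mu_1^{-1}, \ldots, \hat\mu_n^{-1}) \in \mathbb{R}^n$. For any event $\{k_i,d\}$ the signed deviations
\[
f_i := \hat\mu(y_i^{k_i}) - d = k_i \hat\mu_i - d, \qquad 1 \le i \le n,
\]
satisfy $|f_i| < \eta$, and the ordering of the mean indices $\hat\mu(y_i^{k_i})$ coincides with that of the $f_i$. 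Hence the Proposition reduces to producing two events whose deviation vectors have all pairwise differences of opposite sign; equivalently, it suffices to realize $f'_i \approx -f_i$ for every $i$.

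My plan is to recall that in the proof of Theorem \ref{thm:CIJT}, events $\{k_i,d\}$ arise from Diophantine approximations $d\cdot v \equiv \xi \pmod{1}$ inside the $n$-torus $(\mathbb{R}/\mathbb{Z})^n$, with $\xi$ close to $0$ in the closed subgroup $T \subset (\mathbb{R}/\mathbb{Z})^n$ given by the closure of $\mathbb{Z}\cdot v \bmod 1$; the integers $k_i$ are taken to be the nearest integers to $d/\hat\mu_i$. In this correspondence the deviations are approximately $f_i \approx -\hat\mu_i \tilde\xi_i$, where $\tilde\xi_i \in (-\tfrac12,\tfrac12]$ denotes the signed lift of $\xi_i$. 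Since $T$ is a closed subgroup, the involution $\xi \mapsto -\xi$ preserves $T$; picking a small generic $\xi \in T$ (so that the values $\hat\mu_i \tilde\xi_i$ are pairwise distinct), both $\xi$ and $-\xi$ can serve as approximation targets for legitimate common index jump events, yielding deviation vectors that are approximately negatives of each other and hence completely reversed mean-index orderings. The divisibility and largeness conditions imposed by \eqref{eq:N} are compatible with this argument, since they only require passing to a sufficiently deep subsequence in Kronecker's approximation.

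The principal obstacle is the case $\dim T = 0$, which occurs precisely when all ratios $\hat\mu_i/\hat\mu_j$ are rational. In this situation the naive Kronecker construction produces optimal events with $f_i = 0$ for every $i$, giving trivial orderings that cannot be reversed. One circumvents this by replacing the optimal target $\xi = 0$ by a suboptimal perturbation $\xi = \varepsilon e$, where $e \in \mathbb{R}^n$ is any vector with pairwise distinct values $\hat\mu_i e_i$; this enlarges the deviations to $|f_i| \lesssim \varepsilon$, which remains below $\eta$ provided $\varepsilon$ is small, and the mirrored perturbation $-\varepsilon e$ still yields a legitimate event whose deviations are the negatives of the first, producing the interchanged orderings. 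Verifying that all admissibility conditions of Theorem \ref{thm:CIJT}---most notably the precise iteration formulas \eqref{eq:IR2} and \eqref{eq:IR3}, together with the divisibility constraints coming from \eqref{eq:N}---continue to hold under these perturbations is the delicate technical point, and it follows essentially verbatim the argument of \cite[Lemma~4.4]{Wang22}.
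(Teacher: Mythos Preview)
Your proposal is correct and follows precisely the approach the paper takes: the paper simply cites \cite[Lemma~4.4]{Wang22} without giving its own argument, and your sketch is a faithful outline of that proof, down to the Kronecker approximation in the torus $T=\overline{\mathbb{Z}\cdot v}$, the $\xi\mapsto-\xi$ symmetry, and the perturbation in the rational-ratio case. You have in fact supplied more detail than the paper itself.
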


Theorem \ref{thm:bijection} implies that  the ordering of mean indices is exactly the ordering of degrees for any two closed Reeb orbits in $\mathcal{P}$. Combining this with Proposition \ref{prop:commut}, after reordering  $\{y_1,\dots,y_n\}$
if necessary, we can find  common index jump events $\{k_{i},d\}$ and $\{k'_{i},d'\}$ with $d<d'$
such that the following diagrams hold:
\begin{eqnarray}\nonumber
 \begin{tabular}{lccccc}
&$d-n+1$&$d-n+3$& $\cdots$&$d+n-3$& $d+n-1$\\
&$\Vert$&$\Vert$& $\qquad$&$\Vert$& $\Vert$\\
&$\deg(y_1^{k_1})$&$\deg(y_2^{k_2})$& $\qquad$&$\deg(y_{n-1}^{k_{n-1}})$&$\deg(y_n^{k_n})$\\
\end{tabular}   
\end{eqnarray}
\begin{eqnarray}\nonumber
 \begin{tabular}{lccccc}
&$d'-n+1$&$d'-n+3$& $\cdots$&$d'+n-3$& $d'+n-1$\\
&$\Vert$&$\Vert$& $\qquad$&$\Vert$& $\Vert$\\
&$\deg(y_n^{k'_n})$&$\deg(y_{n-1}^{k'_{n-1}})$& $\qquad$&$\deg(y_2^{k'_2})$&$\deg(y_1^{k'_1})$\\
\end{tabular}   
\end{eqnarray}

\quad

\begin{proof}[Proof of Theorem \ref{thm:main}]
    
In the following we will show that both $y_1$ and $y_n$ are irrational elliptic. By the symmetric positions of $y_1$ and $y_n$, we only state the proof for $y_1$ here, which is divided into several steps:

\quad

{\bf Step 1.} {\it We show that $\deg(y_1^{k'_1})=\mu_{+}(y_1^{k'_1})$ and $\deg(y_1^{k_1})=\mu_{+}(y_1^{k_1})$}.

 Abbreviate  $\mu_+(y_1^{k'_1})$ and $\mu_+(y_1^{k_1})$ by $\mu_+'$ and $\mu_+$ respectively. By \eqref{eq:deg bound}, 
\begin{equation}\label{eq:mu+ bound}
d'+n-1=\deg(y_1^{k'_1})\leq \mu_+'\leq\hat\mu(y_1^{k'_1})+n-1.   
\end{equation}
In Theorem \ref{thm:CIJT} we have $|d'-\hat\mu(y_1^{k'_1})|<\eta$ and we have assumed that $\eta$ is sufficiently small (for instance, $0<\eta<\frac{1}{2}$). Since $\mu_+'$ is an integer, \eqref{eq:mu+ bound} implies that
\begin{equation}\label{eq:mu+'}
    \mu_+'=d'+n-1=\deg(y_1^{k'_1}).
\end{equation}
By the construction of $N$ in \eqref{eq:N}, we conclude that  both $y_1^{k_1'}$ and $y_1^{k_1}$ are  admissible iteration of $y_1^N$. 
Then by \eqref{eq:mu+'},  we can apply Theorem \ref{thm:local max} to get

\begin{equation*}
    \mathrm{CH}_{*}(y_1^{k_1};\mathbb{Q})=0 ~~\text{whenever $*\neq\mu_+$},  
\end{equation*}
which implies \begin{equation}\label{eq:mu+ k1}
    \mu_+=\deg(y_1^{k_1})=d-n+1.
\end{equation}

 Step $1$ is finished by \eqref{eq:mu+'} and \eqref{eq:mu+ k1}.

 \quad

{\bf Step 2.} {\it We show that $y_1$ is strongly non-degenerate, i.e. any iteration of $y_1$ is non-degenerate.} 

Indeed, by \eqref{eq:deg bound} and \eqref{eq:mu+ k1}, 
\begin{equation*}
\hat\mu(y_1^{k_1})-n+1\leq\mu_-(y_1^{k_1})\leq\deg(y_1^{k_1})=d-n+1,   
\end{equation*}
which implies $\mu_-(y_1^{k_1})=d-n+1$ since $|d-\hat\mu(y_1^{k_1})|<\frac{1}{2}$. Combining this fact with \eqref{eq:mu+ k1} we have $\mu_-(y_1^{k_1})=\mu_+(y_1^{k_1})$, which implies $y_1^{k_1}$ is non-degenerate by \eqref{eq:+-nullity}. Since $k_1$ is divisible by $N$, we conclude that $y_1^N$ is also non-degenerate. Thus, by the construction of $N$ in \eqref{eq:N}, we know that there is no root of unity in $\sigma(y_1)$, i.e.,  for each eigenvalue $\lambda=e^{\sqrt{-1}\theta}\in\sigma(y)\cap\mathbb{U}$, $\theta/\pi$ is irrational. Consequently, $y_1$ is strongly non-degenerate.

\quad

{\bf Step 3.} {\it We show that $y_1$ is  elliptic.} 

Denoted by $\Phi\in\mathrm{Sp}(2n-2)$ the restricted linearized Poincar\'e return map of $y_1$. In Step 2 we have shown that $y_1$ is strongly non-degenerate, then by the abstract precise iteration formula \eqref{eq:precise}, we have
\begin{align}
    \mu(y_1^{{k_1}+1})-\mu(y_1^{k_1})
    &=\mu(y_1)+\sum_{\theta\in(0,2\pi),\frac{\theta}{\pi}\notin\mathbb{Q}}(2\lceil\frac{(k_1+1)\theta}{2\pi}\rceil-2\lceil\frac{k_1\theta}{2\pi}\rceil-1)S_{\Phi}^-(e^{\sqrt{-1}\theta})\nonumber\\
    &=\mu(y_1)
    +\sum_{\theta\in(0,\pi),\frac{\theta}{\pi}\notin\mathbb{Q}}(2\lceil\frac{(k_1+1)\theta}{2\pi}\rceil-2\lceil\frac{k_1\theta}{2\pi}\rceil-1)(S_{\Phi}^-(e^{\sqrt{-1}\theta})-S_{\Phi}^+(e^{\sqrt{-1}\theta})),\label{eq:precise k1}
\end{align}
where the second equality holds by the fact that $ S_{\Phi}^-(e^{\sqrt{-1}(2\pi-\theta)})=S_{\Phi}^+(e^{\sqrt{-1}\theta})$ and
\begin{equation}\label{eq:theta 2pi-theta}
 \lceil\frac{(k_1+1)(2\pi-\theta)}{2\pi}\rceil-\lceil\frac{k_1(2\pi-\theta)}{2\pi}\rceil=1-\left(\lceil\frac{(k_1+1)\theta}{2\pi}\rceil-\lceil\frac{k_1\theta}{2\pi}\rceil\right).   
\end{equation}
By \eqref{eq:IR2} and \eqref{eq:mu+ k1}, \eqref{eq:precise k1} is equivalent to 
\begin{equation}\label{eq:n-1}
    n-1=\sum_{\theta\in(0,\pi),\frac{\theta}{\pi}\notin\mathbb{Q}}(2\lceil\frac{(k_1+1)\theta}{2\pi}\rceil-2\lceil\frac{k_1\theta}{2\pi}\rceil-1)(S_{\Phi}^-(e^{\sqrt{-1}\theta})-S_{\Phi}^+(e^{\sqrt{-1}\theta})).
\end{equation}
From \hyperlink{list:S4}{(S4)} of Lemma \ref{lem:prop  split}, we  have 
\begin{equation}\label{eq:split bound}
    \left|S_{\Phi}^-(e^{\sqrt{-1}\theta})-S_{\Phi}^+(e^{\sqrt{-1}\theta})\right|\leq\nu_{e^{\sqrt{-1}\theta}}(\Phi),
\end{equation}
Note that 
\begin{equation}\label{eq:leq 1}
   \lceil\frac{k_1\theta}{2\pi}\rceil\leq \lceil\frac{(k_1+1)\theta}{2\pi}\rceil=\lceil\frac{k_1\theta}{2\pi}+\frac{\theta}{2\pi}\rceil\leq\lceil\frac{k_1\theta}{2\pi}\rceil+1.
\end{equation}
Substituting  \eqref{eq:split bound} and \eqref{eq:leq 1} into \eqref{eq:n-1}, we see that
\begin{equation*}
    n-1\leq \sum_{\theta\in(0,\pi),\frac{\theta}{\pi}\notin\mathbb{Q}} \nu_{e^{\sqrt{-1}\theta}}(\Phi) \leq n-1,
\end{equation*}
which implies $y_1$ is elliptic.

\quad

{\bf Step 4.} {\it We show that $\Phi$ is symplectic similar to $R(\theta_1)\diamond R(\theta_2)\diamond\cdots\diamond R(\theta_{n-1})$ with $\frac{\theta_i}{\pi}\notin\mathbb{Q}$, which finishes the proof.} 

By (i) of Lemma \ref{lem:splt normal}, we  assume that  $\Phi$ can be connected within $\Omega^0(\Phi)$ to 
\begin{equation}\label{eq:basic decom}\begin{aligned}
 &R(\vartheta_1)\diamond\cdots\diamond R(\vartheta_r)
\diamond N_2(e^{\sqrt{-1}\rho_1}, u_1)\diamond\cdots\diamond N_2(e^{\sqrt{-1}\rho_{r_*}}, u_{r_*})\\
&\diamond N_2(e^{\sqrt{-1}\lambda_1}, v_1)\diamond\cdots\diamond N_2(e^{\sqrt{-1}\lambda_{r_0}}.v_{r_0}),   
\end{aligned}
\end{equation}
where $N_2(e^{\sqrt{-1}\rho_j}, u_j) $s are
non-trivial and   $ N_2(e^{\sqrt{-1}\lambda_{r_j}}, v_j)$s  are trivial basic normal
forms in \eqref{eq:4x4}; $\vartheta_j$, $\rho_j$, $\lambda_j\in(0,2\pi)$ with $\frac{\vartheta_j}{\pi}$, $\frac{\rho_j}{\pi}$, $\frac{\lambda_j}{\pi}\notin \mathbb{Q}$; and $r+2(r_*+r_0)=n-1$.
Then by Lemma \ref{lem:splt normal}, \eqref{eq:n-1} becomes
\begin{align*}
    n-1&=\sum_{1\le j\le r}(2\lceil\frac{(k_1+1)\vartheta_j}{2\pi}\rceil-2\lceil\frac{k_1\vartheta_j}{2\pi}\rceil-1)\\
    &\leq r\leq n-1,
\end{align*}
where the inequality holds by \eqref{eq:leq 1}. Consequently, $r=n-1$,  $r_*=0=r_0$,  and the basic normal form decomposition  \eqref{eq:basic decom} is precisely
\begin{equation}\label{eq:rotation decom}
    R(\vartheta_1)\diamond\cdots\diamond R(\vartheta_{n-1}), 
\end{equation}
with $\vartheta_j\in (0,2\pi)$, $\frac{\vartheta_j}{\pi}\notin\mathbb{Q}$ and  
\begin{equation}\label{eq:=1}
     \lceil\frac{(k_1+1)\vartheta_j}{2\pi}\rceil-\lceil\frac{k_1\vartheta_j}{2\pi}\rceil=1
\end{equation}for each $1\leq j\leq n-1$. 

As a direct consequence of \eqref{eq:theta 2pi-theta} and \eqref{eq:=1}, we have the following claim:

{\bf Claim 1.} {\it If $R(\theta)$ appears in the basic normal form decomposition
\eqref{eq:rotation decom}, then $R(2\pi-\theta)$ does not  appear.}

Finally, we finish the proof by the following assertion:

{\bf Claim 2.}  {\it There exists $Q_1\in \mathrm{Sp}(2n-2)$ such that
\begin{equation}\label{4.46}
Q_1^{-1}\Phi Q_1=R(\theta_1)\diamond\cdots\diamond R(\theta_{n-1})\end{equation}
with $\theta_i=\vartheta_i$ or $2\pi-\vartheta_i$ for $1\le i\le n-1$.}

 In fact, suppose $\omega^{\pm 1}_1, \ldots, \omega^{\pm 1}_{n-1}$ with $\omega_j = e^{\sqrt{-1}\vartheta_j}$ are the eigenvalues of $\Phi$. By \cite[Theorem 1.6.11]{Long02}, there exists $X_1 \in \mathrm{Sp}(2n-2)$ such that
\begin{equation}\label{4.47} 
X_1\Phi X_1^{-1} = S_1 \diamond \cdots \diamond S_{m_1} \diamond S_0,
\end{equation}
where $S_0 \in \mathrm{Sp}(2k_0)$ with $k_0 \geq 0$ and $\omega_1 \notin \sigma(S_0)$, while for $1 \leq i \leq m_1$, $k_i \geq 1$ and $S_i \in \mathrm{Sp}(2k_i)$ is in the normal form $N_{k_i}(\lambda_i, b_i)$ with $\lambda_i = \omega_1$ or $\omega_1^{-1}$ as defined in \cite[Section 1.6]{Long02}.

Applying Cases 3 and 4 from \cite[Section 1.8]{Long02}, if $k_i \geq 3$ for some $1 \leq i \leq m_1$, then $S_i$ can be connected within $\Omega^0(S_i)$ to some $\widetilde{S_i}$ whose total multiplicity of elliptic eigenvalues is strictly less than that of $S_i$. This contradicts \eqref{eq:rotation decom}. Hence $S_i = N_{l_i}(\lambda_i, b_i)$ with $l_i = 1$ or $2$ for $1 \leq i \leq m_1$.

We now show $l_i = 1$ for all $1 \leq i \leq m_1$. Suppose $l_i = 2$ for some $i$. Then by \eqref{eq:rotation decom}, $S_i$ is not a basic normal form, implying
\[
S_i \in \mathcal{M}_{\omega_1}^2(4) := \{ M \in \mathrm{Sp}(4) : \dim_{\mathbb{C}} \ker_{\mathbb{C}}(M - \omega_1 \mathrm{Id}) = 2 \}.
\]
By Case 4 in \cite[Section 1.8]{Long02}, $S_i$ can be connected within $\Omega^0(S_i)$ to $R(\omega_1) \diamond R(2\pi - \omega_1)$, contradicting Claim 1. Thus (\ref{4.47}) simplifies to
\begin{equation}\label{4.48} 
X_1\Phi X_1^{-1} = R(\theta_1)^{\diamond m_1} \diamond S_0,
\end{equation}
with $\theta_1 = \vartheta_1$ or $2\pi - \vartheta_1$. Iterating this argument at most $n-1$ times yields (\ref{4.46}).
\end{proof}

\quad

\addtocontents{toc}{\protect\begingroup}
\addtocontents{toc}{\protect\setcounter{tocdepth}{-10}}
\subsection*{Acknowledgements}
\addtocontents{toc}{\protect\endgroup}
H. Liu is partially supported by NSFC (Nos. 12371195, 12022111) and W. Wang is partially supported by NSFC (No. 12025101).

\addtocontents{toc}{\protect\setcounter{tocdepth}{1}}



\vspace{20pt}
\printbibliography

@article {Arn99,
    AUTHOR = {Arnaud, Marie-Claude},
     TITLE = {Existence d'orbites p\'eriodiques compl\`etement elliptiques
              des hamiltoniens convexes pr\'esentant certaines sym\'etries},
   JOURNAL = {C. R. Acad. Sci. Paris S\'er. I Math.},
  FJOURNAL = {Comptes Rendus de l'Acad\'emie des Sciences. S\'erie I.
              Math\'ematique},
    VOLUME = {328},
      YEAR = {1999},
    NUMBER = {11},
     PAGES = {1035--1038},
      %ISSN = {0764-4442},
   MRCLASS = {37J45 (34C25)},
  MRNUMBER = {1696202},
MRREVIEWER = {Gabriella\ Tarantello},
       DOI = {10.1016/S0764-4442(99)80320-6},
       URL = {https://doi.org/10.1016/S0764-4442(99)80320-6},
}

@article {AM22,
    AUTHOR = {Abreu, Miguel and Macarini, Leonardo},
     TITLE = {Dynamical implications of convexity beyond dynamical
              convexity},
   JOURNAL = {Calc. Var. Partial Differential Equations},
  FJOURNAL = {Calculus of Variations and Partial Differential Equations},
    VOLUME = {61},
      YEAR = {2022},
    NUMBER = {3},
     PAGES = {Paper No. 116, 47},
      %ISSN = {0944-2669,1432-0835},
   MRCLASS = {53D40 (37J55)},
  MRNUMBER = {4411705},
MRREVIEWER = {Vittorio\ Coti Zelati},
       DOI = {10.1007/s00526-022-02228-1},
      URL = {https://doi.org/10.1007/s00526-022-02228-1},
}

@article {AM17,
    AUTHOR = {Abreu, Miguel and Macarini, Leonardo},
     TITLE = {Dynamical convexity and elliptic periodic orbits for {R}eeb
              flows},
   JOURNAL = {Math. Ann.},
  FJOURNAL = {Mathematische Annalen},
    VOLUME = {369},
      YEAR = {2017},
    NUMBER = {1-2},
     PAGES = {331--386},
      %ISSN = {0025-5831,1432-1807},
   MRCLASS = {37J45 (53D10)},
  MRNUMBER = {3694649},
MRREVIEWER = {Gabriele\ Benedetti},
       DOI = {10.1007/s00208-017-1532-4},
       URL = {https://doi.org/10.1007/s00208-017-1532-4},
}

@article {BG92,
    AUTHOR = {Barge, J. and Ghys, \'E.},
     TITLE = {Cocycles d'{E}uler et de {M}aslov},
   JOURNAL = {Math. Ann.},
  FJOURNAL = {Mathematische Annalen},
    VOLUME = {294},
      YEAR = {1992},
    NUMBER = {2},
     PAGES = {235--265},
      %ISSN = {0025-5831,1432-1807},
   MRCLASS = {55U99 (11F20 57R20)},
  MRNUMBER = {1183404},
MRREVIEWER = {V.\ P.\ Snaith},
       DOI = {10.1007/BF01934324},
       URL = {https://doi.org/10.1007/BF01934324},
}

@article {BO09mb,
    AUTHOR = {Bourgeois, Fr\'ed\'eric and Oancea, Alexandru},
     TITLE = {Symplectic homology, autonomous {H}amiltonians, and
              {M}orse-{B}ott moduli spaces},
   JOURNAL = {Duke Math. J.},
  FJOURNAL = {Duke Mathematical Journal},
    VOLUME = {146},
      YEAR = {2009},
    NUMBER = {1},
     PAGES = {71--174},
      %ISSN = {0012-7094,1547-7398},
   MRCLASS = {53D40},
  MRNUMBER = {2475400},
MRREVIEWER = {Tobias\ Ekholm},
       DOI = {10.1215/00127094-2008-062},
       URL = {https://doi.org/10.1215/00127094-2008-062},
}

@article {BO13gysin,
    AUTHOR = {Bourgeois, Fr\'{e}d\'{e}ric and Oancea, Alexandru},
     TITLE = {The {G}ysin exact sequence for {$S^1$}-equivariant symplectic
              homology},
   JOURNAL = {J. Topol. Anal.},
  FJOURNAL = {Journal of Topology and Analysis},
    VOLUME = {5},
      YEAR = {2013},
    NUMBER = {4},
     PAGES = {361--407},
      %ISSN = {1793-5253},
   MRCLASS = {55N91 (53D40 57R58)},
  MRNUMBER = {3152208},
MRREVIEWER = {Michael J. Usher},
       DOI = {10.1142/S1793525313500210},
       URL = {https://doi.org/10.1142/S1793525313500210},
}

@article {BLMR85,
    AUTHOR = {Berestycki, Henri and Lasry, Jean-Michel and Mancini, Giovanni
              and Ruf, Bernhard},
     TITLE = {Existence of multiple periodic orbits on star-shaped
              {H}amiltonian surfaces},
   JOURNAL = {Comm. Pure Appl. Math.},
  FJOURNAL = {Communications on Pure and Applied Mathematics},
    VOLUME = {38},
      YEAR = {1985},
    NUMBER = {3},
     PAGES = {253--289},
      %ISSN = {0010-3640,1097-0312},
   MRCLASS = {58F05 (58E05)},
  MRNUMBER = {784474},
MRREVIEWER = {A.\ Vanderbauwhede},
       DOI = {10.1002/cpa.3160380302},
       URL = {https://doi.org/10.1002/cpa.3160380302},
}

@ARTICLE{CGGM23,
       author = {\c{C}ineli, Erman and Ginzburg, Viktor L. and G\"urel, Ba\c sak Z. and {Mazzucchelli}, Marco},
        title = "{Invariant Sets and Hyperbolic Closed Reeb Orbits}",
      journal = {arXiv e-prints},
     keywords = {Mathematics - Symplectic Geometry, Mathematics - Dynamical Systems, 53D40, 37J11, 37J46},
         year = 2023,
        month = sep,
          %eid = {arXiv:2309.04576},
        %pages = {arXiv:2309.04576},
          %doi = {10.48550/arXiv.2309.04576},
archivePrefix = {arXiv},
       eprint = {2309.04576},
 primaryClass = {math.SG},
       adsurl = {https://ui.adsabs.harvard.edu/abs/2023arXiv230904576C},
      adsnote = {Provided by the SAO/NASA Astrophysics Data System}
}

@ARTICLE{CGG24,
       author = {\c{C}ineli, Erman and Ginzburg, Viktor L. and G\"{u}rel, Ba\c{s}ak Z.},
        title = "{Closed Orbits of Dynamically Convex Reeb Flows: Towards the HZ- and Multiplicity Conjectures}",
      journal = {arXiv e-prints},
     keywords = {Mathematics - Symplectic Geometry, Mathematics - Dynamical Systems, 53D40, 37J11, 37J46},
         year = 2024,
        month = oct,
          %eid = {arXiv:2410.13093},
        %pages = {arXiv:2410.13093},
          %doi = {10.48550/arXiv.2410.13093},
archivePrefix = {arXiv},
       eprint = {2410.13093},
 primaryClass = {math.SG},
       adsurl = {https://ui.adsabs.harvard.edu/abs/2024arXiv241013093C},
      adsnote = {Provided by the SAO/NASA Astrophysics Data System}
}

@article {CZ84,
    AUTHOR = {Conley, Charles and Zehnder, Eduard},
     TITLE = {Morse-type index theory for flows and periodic solutions for
              {H}amiltonian equations},
   JOURNAL = {Comm. Pure Appl. Math.},
  FJOURNAL = {Communications on Pure and Applied Mathematics},
    VOLUME = {37},
      YEAR = {1984},
    NUMBER = {2},
     PAGES = {207--253},
     %ISSN = {0010-3640,1097-0312},
   MRCLASS = {58E05 (58F05 58F22)},
  MRNUMBER = {733717},
MRREVIEWER = {A.\ Vanderbauwhede},
       DOI = {10.1002/cpa.3160370204},
      URL = {https://doi.org/10.1002/cpa.3160370204},
}

@article {CFHW96,
    AUTHOR = {Cieliebak, K. and Floer, A. and Hofer, H. and Wysocki, K.},
     TITLE = {Applications of symplectic homology. {II}. {S}tability of the
              action spectrum},
   JOURNAL = {Math. Z.},
  FJOURNAL = {Mathematische Zeitschrift},
    VOLUME = {223},
      YEAR = {1996},
    NUMBER = {1},
     PAGES = {27--45},
      %ISSN = {0025-5874,1432-1823},
   MRCLASS = {58F05 (57R99 58E99)},
  MRNUMBER = {1408861},
MRREVIEWER = {Karl\ Friedrich\ Siburg},
       DOI = {10.1007/PL00004267},
       URL = {https://doi.org/10.1007/PL00004267},
}

@article {CGH16,
    AUTHOR = {Cristofaro-Gardiner, Daniel and Hutchings, Michael},
     TITLE = {From one {R}eeb orbit to two},
   JOURNAL = {J. Differential Geom.},
  FJOURNAL = {Journal of Differential Geometry},
    VOLUME = {102},
      YEAR = {2016},
    NUMBER = {1},
     PAGES = {25--36},
      %ISSN = {0022-040X,1945-743X},
   MRCLASS = {53D10 (53D42)},
  MRNUMBER = {3447085},
MRREVIEWER = {Cecilia\ Karlsson},
       URL = {http://projecteuclid.org/euclid.jdg/1452002876},
}

@article {CGHHL23,
    AUTHOR = {Cristofaro-Gardiner, Daniel and Hryniewicz, Umberto and
              Hutchings, Michael and Liu, Hui},
     TITLE = {Contact three-manifolds with exactly two simple {R}eeb orbits},
   JOURNAL = {Geom. Topol.},
  FJOURNAL = {Geometry \& Topology},
    VOLUME = {27},
      YEAR = {2023},
    NUMBER = {9},
     PAGES = {3801--3831},
      ISSN = {1465-3060,1364-0380},
   MRCLASS = {53D42 (37J55 53E50)},
  MRNUMBER = {4674840},
MRREVIEWER = {Alexander\ Fel\cprime shtyn},
       DOI = {10.2140/gt.2023.27.3801},
       URL = {https://doi.org/10.2140/gt.2023.27.3801},
}

@ARTICLE{CGHHL23b,
       author = {Cristofaro-Gardiner, Daniel and Hryniewicz, Umberto and Hutchings, Michael and Liu, Hui},
        title = "{Proof of Hofer-Wysocki-Zehnder's two or infinity conjecture}",
      journal = {arXiv e-prints},
     keywords = {Mathematics - Symplectic Geometry, Mathematics - Dynamical Systems},
         year = 2023,
        month = oct,
          %eid = {arXiv:2310.07636},
        %pages = {arXiv:2310.07636},
          %doi = {10.48550/arXiv.2310.07636},
archivePrefix = {arXiv},
       eprint = {2310.07636},
 primaryClass = {math.SG},
       adsurl = {https://ui.adsabs.harvard.edu/abs/2023arXiv231007636C},
      adsnote = {Provided by the SAO/NASA Astrophysics Data System}
}

@article {DLW16,
    AUTHOR = {Duan, Huagui and Long, Yiming and Wang, Wei},
     TITLE = {The enhanced common index jump theorem for symplectic paths
              and non-hyperbolic closed geodesics on {F}insler manifolds},
   JOURNAL = {Calc. Var. Partial Differential Equations},
  FJOURNAL = {Calculus of Variations and Partial Differential Equations},
    VOLUME = {55},
      YEAR = {2016},
    NUMBER = {6},
     PAGES = {Art. 145, 28},
      %ISSN = {0944-2669,1432-0835},
   MRCLASS = {53C22 (53B40 58E05 58E10)},
  MRNUMBER = {3568050},
MRREVIEWER = {Mircea\ Crasmareanu},
       DOI = {10.1007/s00526-016-1075-7},
       URL = {https://doi.org/10.1007/s00526-016-1075-7},
}

@article {DLLW24,
    AUTHOR = {Duan, Huagui and Liu, Hui and Long, Yiming and Wang, Wei},
     TITLE = {Generalized common index jump theorem with applications to
              closed characteristics on star-shaped hypersurfaces and
              beyond},
   JOURNAL = {J. Funct. Anal.},
  FJOURNAL = {Journal of Functional Analysis},
    VOLUME = {286},
      YEAR = {2024},
    NUMBER = {7},
     PAGES = {Paper No. 110352, 41},
      %ISSN = {0022-1236,1096-0783},
   MRCLASS = {58E05 (34C25 37J46)},
  MRNUMBER = {4701782},
MRREVIEWER = {Maria\ Letizia\ Bertotti},
       DOI = {10.1016/j.jfa.2024.110352},
       URL = {https://doi.org/10.1016/j.jfa.2024.110352},
}

@article {DL17,
    AUTHOR = {Duan, Huagui and Liu, Hui},
     TITLE = {Multiplicity and ellipticity of closed characteristics on
              compact star-shaped hypersurfaces in {$ \mathbf{R}^{2n}$}},
   JOURNAL = {Calc. Var. Partial Differential Equations},
  FJOURNAL = {Calculus of Variations and Partial Differential Equations},
    VOLUME = {56},
      YEAR = {2017},
    NUMBER = {3},
     PAGES = {Paper No. 65, 30},
      %ISSN = {0944-2669,1432-0835},
   MRCLASS = {58E05 (34A26 37J45)},
  MRNUMBER = {3640028},
MRREVIEWER = {Ying\ Lv},
       DOI = {10.1007/s00526-017-1173-1},
       URL = {https://doi.org/10.1007/s00526-017-1173-1},
}

@article {DLLW18,
    AUTHOR = {Duan, Huagui and Liu, Hui and Long, Yiming and Wang, Wei},
     TITLE = {Non-hyperbolic closed characteristics on non-degenerate
              star-shaped hypersurfaces in {$\mathbb{R}^{2n}$}},
   JOURNAL = {Acta Math. Sin. (Engl. Ser.)},
  FJOURNAL = {Acta Mathematica Sinica (English Series)},
    VOLUME = {34},
      YEAR = {2018},
    NUMBER = {1},
     PAGES = {1--18},
      %ISSN = {1439-8516,1439-7617},
   MRCLASS = {58E05 (34C25 37J45 53D12)},
  MRNUMBER = {3735829},
MRREVIEWER = {Xijun\ Hu},
       DOI = {10.1007/s10114-016-6019-9},
      URL = {https://doi.org/10.1007/s10114-016-6019-9},
}

@article {DDE92,
    AUTHOR = {Dell'Antonio, Gianfausto and D'Onofrio, Biancamaria and
              Ekeland, Ivar},
     TITLE = {Les syst\`emes hamiltoniens convexes et pairs ne sont pas
              ergodiques en g\'en\'eral},
   JOURNAL = {C. R. Acad. Sci. Paris S\'er. I Math.},
  FJOURNAL = {Comptes Rendus de l'Acad\'emie des Sciences. S\'erie I.
              Math\'ematique},
    VOLUME = {315},
      YEAR = {1992},
    NUMBER = {13},
     PAGES = {1413--1415},
      %ISSN = {0764-4442},
   MRCLASS = {58F22 (58F05 70H05)},
  MRNUMBER = {1199013},
MRREVIEWER = {Shi\ Qing\ Zhang},
}

@article {EL87,
    AUTHOR = {Ekeland, Ivar and Lassoued, L.},
     TITLE = {Multiplicit\'e{} des trajectoires ferm\'ees de syst\`emes
              hamiltoniens convexes},
   JOURNAL = {Ann. Inst. H. Poincar\'e{} Anal. Non Lin\'eaire},
  FJOURNAL = {Annales de l'Institut Henri Poincar\'e. Analyse Non
              Lin\'eaire},
    VOLUME = {4},
      YEAR = {1987},
    NUMBER = {4},
     PAGES = {307--335},
      %ISSN = {0294-1449},
   MRCLASS = {58F05 (34C25 58E05 58F22)},
  MRNUMBER = {917740},
MRREVIEWER = {Michele\ Matzeu},
       URL = {http://www.numdam.org/item?id=AIHPC_1987__4_4_307_0},
}

@article {EH87,
    AUTHOR = {Ekeland, Ivar and Hofer, H.},
     TITLE = {Convex {H}amiltonian energy surfaces and their periodic
              trajectories},
   JOURNAL = {Comm. Math. Phys.},
  FJOURNAL = {Communications in Mathematical Physics},
    VOLUME = {113},
      YEAR = {1987},
    NUMBER = {3},
     PAGES = {419--469},
      %ISSN = {0010-3616,1432-0916},
   MRCLASS = {58F05 (58E05)},
  MRNUMBER = {925924},
MRREVIEWER = {Shi\ Tao\ Deng},
       URL = {http://projecteuclid.org/euclid.cmp/1104160288},
}

@ARTICLE{Fe20,
       author = {Fender, Elijah},
        title = "{Local Symplectic Homology of Reeb Orbits}",
      journal = {arXiv e-prints},
     keywords = {Mathematics - Symplectic Geometry, 53D40},
         year = 2020,
        month = oct,
          %eid = {arXiv:2010.01438},
        %pages = {arXiv:2010.01438},
          %doi = {10.48550/arXiv.2010.01438},
archivePrefix = {arXiv},
       eprint = {2010.01438},
 primaryClass = {math.SG},
       adsurl = {https://ui.adsabs.harvard.edu/abs/2020arXiv201001438F},
      adsnote = {Provided by the SAO/NASA Astrophysics Data System}
}

@article {GM69a,
    AUTHOR = {Gromoll, Detlef and Meyer, Wolfgang},
     TITLE = {On differentiable functions with isolated critical points},
   JOURNAL = {Topology},
  FJOURNAL = {Topology. An International Journal of Mathematics},
    VOLUME = {8},
      YEAR = {1969},
     PAGES = {361--369},
      %ISSN = {0040-9383},
   MRCLASS = {57.55},
  MRNUMBER = {246329},
MRREVIEWER = {M.\ Klingmann},
       DOI = {10.1016/0040-9383(69)90022-6},
       URL = {https://doi.org/10.1016/0040-9383(69)90022-6},
}

@article {GM69b,
    AUTHOR = {Gromoll, Detlef and Meyer, Wolfgang},
     TITLE = {Periodic geodesics on compact riemannian manifolds},
   JOURNAL = {J. Differential Geometry},
  FJOURNAL = {Journal of Differential Geometry},
    VOLUME = {3},
      YEAR = {1969},
     PAGES = {493--510},
      %ISSN = {0022-040X,1945-743X},
   MRCLASS = {53.72},
  MRNUMBER = {264551},
MRREVIEWER = {W.\ Klingenberg},
       URL = {http://projecteuclid.org/euclid.jdg/1214429070},
}

@article {Gi10,
    AUTHOR = {Ginzburg, Viktor L.},
     TITLE = {The {C}onley conjecture},
   JOURNAL = {Ann. of Math. (2)},
  FJOURNAL = {Annals of Mathematics. Second Series},
    VOLUME = {172},
      YEAR = {2010},
    NUMBER = {2},
     PAGES = {1127--1180},
      %ISSN = {0003-486X,1939-8980},
   MRCLASS = {53D40 (37J05 53D35)},
  MRNUMBER = {2680488},
MRREVIEWER = {Hai-Long\ Her},
       DOI = {10.4007/annals.2010.172.1129},
       URL = {https://doi.org/10.4007/annals.2010.172.1129},
}

@article {GG10,
    AUTHOR = {Ginzburg, Viktor L. and G\"urel, Ba\c sak Z.},
     TITLE = {Local {F}loer homology and the action gap},
   JOURNAL = {J. Symplectic Geom.},
  FJOURNAL = {The Journal of Symplectic Geometry},
    VOLUME = {8},
      YEAR = {2010},
    NUMBER = {3},
     PAGES = {323--357},
      %ISSN = {1527-5256,1540-2347},
   MRCLASS = {53D40},
  MRNUMBER = {2684510},
MRREVIEWER = {Vincent\ Humili\`ere},
       DOI = {10.4310/jsg.2010.v8.n3.a4},
       URL = {https://doi.org/10.4310/jsg.2010.v8.n3.a4},
}

@article {GG20,
    AUTHOR = {Ginzburg, Viktor L. and G\"{u}rel, Ba\c{s}ak Z.},
     TITLE = {Lusternik-{S}chnirelmann theory and closed {R}eeb orbits},
   JOURNAL = {Math. Z.},
  FJOURNAL = {Mathematische Zeitschrift},
    VOLUME = {295},
      YEAR = {2020},
    NUMBER = {1-2},
     PAGES = {515--582},
      %ISSN = {0025-5874},
   MRCLASS = {53D40 (37J55 58E05)},
  MRNUMBER = {4100023},
MRREVIEWER = {Vittorio Coti Zelati},
       DOI = {10.1007/s00209-019-02361-2},
       URL = {https://doi.org/10.1007/s00209-019-02361-2},
}

@article {GGM18,
    AUTHOR = {Ginzburg, Viktor L. and G\"urel, Ba\c sak Z. and Macarini,
              Leonardo},
     TITLE = {Multiplicity of closed {R}eeb orbits on prequantization
              bundles},
   JOURNAL = {Israel J. Math.},
  FJOURNAL = {Israel Journal of Mathematics},
    VOLUME = {228},
      YEAR = {2018},
    NUMBER = {1},
     PAGES = {407--453},
      %ISSN = {0021-2172,1565-8511},
   MRCLASS = {53D10 (34C25 37D40 53D50)},
  MRNUMBER = {3874849},
MRREVIEWER = {Mircea\ Crasmareanu},
       DOI = {10.1007/s11856-018-1769-y},
       URL = {https://doi.org/10.1007/s11856-018-1769-y},
}

@article {GG15,
    AUTHOR = {Ginzburg, Viktor L. and G\"oren, Yusuf},
     TITLE = {Iterated index and the mean {E}uler characteristic},
   JOURNAL = {J. Topol. Anal.},
  FJOURNAL = {Journal of Topology and Analysis},
    VOLUME = {7},
      YEAR = {2015},
    NUMBER = {3},
     PAGES = {453--481},
      %ISSN = {1793-5253,1793-7167},
   MRCLASS = {53D42 (37C25 37J45 70H12)},
  MRNUMBER = {3346929},
MRREVIEWER = {Alexander\ Fel\cprime shtyn},
       DOI = {10.1142/S179352531550017X},
       URL = {https://doi.org/10.1142/S179352531550017X},
}

@article {GHHM13,
    AUTHOR = {Ginzburg, Viktor L. and Hein, Doris and Hryniewicz, Umberto L.
              and Macarini, Leonardo},
     TITLE = {Closed {R}eeb orbits on the sphere and symplectically
              degenerate maxima},
   JOURNAL = {Acta Math. Vietnam.},
  FJOURNAL = {Acta Mathematica Vietnamica},
    VOLUME = {38},
      YEAR = {2013},
    NUMBER = {1},
     PAGES = {55--78},
      %ISSN = {0251-4184,2315-4144},
   MRCLASS = {53D42 (37J10 53D40 57R17)},
  MRNUMBER = {3089878},
MRREVIEWER = {Rostislav\ Matveyev},
       DOI = {10.1007/s40306-012-0002-z},
       URL = {https://doi.org/10.1007/s40306-012-0002-z},
}

@article {GK16,
    AUTHOR = {Gutt, Jean and Kang, Jungsoo},
     TITLE = {On the minimal number of periodic orbits on some hypersurfaces
              in {$\mathbb{R}^{2n}$}},
   JOURNAL = {Ann. Inst. Fourier (Grenoble)},
  FJOURNAL = {Universit\'e{} de Grenoble. Annales de l'Institut Fourier},
    VOLUME = {66},
      YEAR = {2016},
    NUMBER = {6},
     PAGES = {2485--2505},
      %ISSN = {0373-0956,1777-5310},
   MRCLASS = {53D10 (37J55)},
  MRNUMBER = {3580178},
MRREVIEWER = {Gabriele\ Benedetti},
       DOI = {10.5802/aif.3069},
       URL = {https://doi.org/10.5802/aif.3069},
}

@article {Gir84,
    AUTHOR = {Girardi, Mario},
     TITLE = {Multiple orbits for {H}amiltonian systems on starshaped
              surfaces with symmetries},
   JOURNAL = {Ann. Inst. H. Poincar\'e{} Anal. Non Lin\'eaire},
  FJOURNAL = {Annales de l'Institut Henri Poincar\'e. Analyse Non
              Lin\'eaire},
    VOLUME = {1},
      YEAR = {1984},
    NUMBER = {4},
     PAGES = {285--294},
      %ISSN = {0294-1449},
   MRCLASS = {58F22 (58F05 70H05)},
  MRNUMBER = {778975},
MRREVIEWER = {Wei\ Yue\ Ding},
       URL = {http://www.numdam.org/item?id=AIHPC_1984__1_4_285_0},
}

@article {HO17,
    AUTHOR = {Hu, Xijun and Ou, Yuwei},
     TITLE = {Stability of closed characteristics on compact convex
              hypersurfaces in {$\mathbf{R}^{2n}$}},
   JOURNAL = {J. Fixed Point Theory Appl.},
  FJOURNAL = {Journal of Fixed Point Theory and Applications},
    VOLUME = {19},
      YEAR = {2017},
    NUMBER = {1},
     PAGES = {585--600},
      %ISSN = {1661-7738,1661-7746},
   MRCLASS = {58E05 (34C25 37J45 53D12)},
  MRNUMBER = {3625085},
MRREVIEWER = {Maria\ Letizia\ Bertotti},
       DOI = {10.1007/s11784-016-0366-0},
       URL = {https://doi.org/10.1007/s11784-016-0366-0},
}

@article {Hin09,
    AUTHOR = {Hingston, Nancy},
     TITLE = {Subharmonic solutions of {H}amiltonian equations on tori},
   JOURNAL = {Ann. of Math. (2)},
  FJOURNAL = {Annals of Mathematics. Second Series},
    VOLUME = {170},
      YEAR = {2009},
    NUMBER = {2},
     PAGES = {529--560},
      %ISSN = {0003-486X,1939-8980},
   MRCLASS = {53D35 (37J45)},
  MRNUMBER = {2552101},
MRREVIEWER = {Tobias\ Ekholm},
       DOI = {10.4007/annals.2009.170.529},
       URL = {https://doi.org/10.4007/annals.2009.170.529},
}

@article {HL02,
    AUTHOR = {Hu, Xijun and Long, Yiming},
     TITLE = {Closed characteristics on non-degenerate star-shaped
              hypersurfaces in {$\mathbb{R}^{2n}$}},
   JOURNAL = {Sci. China Ser. A},
  FJOURNAL = {Science in China. Series A. Mathematics},
    VOLUME = {45},
      YEAR = {2002},
    NUMBER = {8},
     PAGES = {1038--1052},
      %ISSN = {1006-9283,1862-2763},
   MRCLASS = {37J45 (34C25 53D12 58E05)},
  MRNUMBER = {1942918},
MRREVIEWER = {James\ F.\ Reineck},
       DOI = {10.1007/BF02879987},
       URL = {https://doi.org/10.1007/BF02879987},
}

@article {HWZ98,
    AUTHOR = {Hofer, H. and Wysocki, K. and Zehnder, E.},
     TITLE = {The dynamics on three-dimensional strictly convex energy
              surfaces},
   JOURNAL = {Ann. of Math. (2)},
  FJOURNAL = {Annals of Mathematics. Second Series},
    VOLUME = {148},
      YEAR = {1998},
    NUMBER = {1},
     PAGES = {197--289},
      %ISSN = {0003-486X,1939-8980},
   MRCLASS = {58F05 (58D10 58F22)},
  MRNUMBER = {1652928},
MRREVIEWER = {Matthias\ Schwarz},
       DOI = {10.2307/120994},
       URL = {https://doi.org/10.2307/120994},
}

@article {HWZ03,
    AUTHOR = {Hofer, H. and Wysocki, K. and Zehnder, E.},
     TITLE = {Finite energy foliations of tight three-spheres and
              {H}amiltonian dynamics},
   JOURNAL = {Ann. of Math. (2)},
  FJOURNAL = {Annals of Mathematics. Second Series},
    VOLUME = {157},
      YEAR = {2003},
    NUMBER = {1},
     PAGES = {125--255},
      %ISSN = {0003-486X,1939-8980},
   MRCLASS = {53D35 (37C27 37C85 37J05 57R30)},
  MRNUMBER = {1954266},
MRREVIEWER = {Kai\ Cieliebak},
       DOI = {10.4007/annals.2003.157.125},
       URL = {https://doi.org/10.4007/annals.2003.157.125},
}

@article {HM15,
    AUTHOR = {Hryniewicz, Umberto L. and Macarini, Leonardo},
     TITLE = {Local contact homology and applications},
   JOURNAL = {J. Topol. Anal.},
  FJOURNAL = {Journal of Topology and Analysis},
    VOLUME = {7},
      YEAR = {2015},
    NUMBER = {2},
     PAGES = {167--238},
      %ISSN = {1793-5253,1793-7167},
   MRCLASS = {53D10 (37J45 53D42)},
  MRNUMBER = {3326300},
MRREVIEWER = {Nick\ Sheridan},
       DOI = {10.1142/S1793525315500119},
       URL = {https://doi.org/10.1142/S1793525315500119},
}

@article {LL16,
    AUTHOR = {Liu, Hui and Long, Yiming},
     TITLE = {The existence of two closed characteristics on every compact
              star-shaped hypersurface in {$\mathbb{R}^4$}},
   JOURNAL = {Acta Math. Sin. (Engl. Ser.)},
  FJOURNAL = {Acta Mathematica Sinica (English Series)},
    VOLUME = {32},
      YEAR = {2016},
    NUMBER = {1},
     PAGES = {40--53},
      %ISSN = {1439-8516,1439-7617},
   MRCLASS = {58E05 (34C25 37J45)},
  MRNUMBER = {3431159},
MRREVIEWER = {Addolorata\ Salvatore},
       DOI = {10.1007/s10114-014-4108-1},
       URL = {https://doi.org/10.1007/s10114-014-4108-1},
}

@article {LL17,
    AUTHOR = {Liu, Hui and Long, Yiming},
     TITLE = {Irrationally elliptic closed characteristics on symmetric
              compact star-shaped hypersurfaces in {$\mathbf{R}^4$}},
   JOURNAL = {J. Fixed Point Theory Appl.},
  FJOURNAL = {Journal of Fixed Point Theory and Applications},
    VOLUME = {19},
      YEAR = {2017},
    NUMBER = {1},
     PAGES = {263--280},
      %ISSN = {1661-7738,1661-7746},
   MRCLASS = {53D42 (34C25 37J45 58E05)},
  MRNUMBER = {3625071},
MRREVIEWER = {Addolorata\ Salvatore},
       DOI = {10.1007/s11784-016-0352-6},
       URL = {https://doi.org/10.1007/s11784-016-0352-6},
}

@article {LWZ20,
    AUTHOR = {Liu, Hui and Wang, Chongzhi and Zhang, Duanzhi},
     TITLE = {Elliptic and non-hyperbolic closed characteristics on compact
              convex {P}-cyclic symmetric hypersurfaces in {$\mathbf R^{2n}$}},
   JOURNAL = {Calc. Var. Partial Differential Equations},
  FJOURNAL = {Calculus of Variations and Partial Differential Equations},
    VOLUME = {59},
      YEAR = {2020},
    NUMBER = {1},
     PAGES = {Paper No. 24, 20},
      %ISSN = {0944-2669,1432-0835},
   MRCLASS = {58E05 (34C25 37J12)},
  MRNUMBER = {4048333},
MRREVIEWER = {Huagui\ Duan},
       DOI = {10.1007/s00526-019-1681-2},
       URL = {https://doi.org/10.1007/s00526-019-1681-2},
}

@article {Long90,
    AUTHOR = {Long, Yiming},
     TITLE = {Maslov-type index, degenerate critical points, and
              asymptotically linear {H}amiltonian systems},
   JOURNAL = {Sci. China Ser. A},
  FJOURNAL = {Science in China (Scientia Sinica). Series A. Mathematics,
              Physics, Astronomy},
    VOLUME = {33},
      YEAR = {1990},
    NUMBER = {12},
     PAGES = {1409--1419},
      %ISSN = {1001-6511},
   MRCLASS = {58F22 (34C25 58E05 58F05 70H05)},
  MRNUMBER = {1090484},
MRREVIEWER = {Norman\ Dancer},
}

@article {Long97,
    AUTHOR = {Long, Yiming},
     TITLE = {A {M}aslov-type index theory for symplectic paths},
   JOURNAL = {Topol. Methods Nonlinear Anal.},
  FJOURNAL = {Topological Methods in Nonlinear Analysis},
    VOLUME = {10},
      YEAR = {1997},
    NUMBER = {1},
     PAGES = {47--78},
      %ISSN = {1230-3429},
   MRCLASS = {58E05 (34B05 34C99 58F05 58F22)},
  MRNUMBER = {1646611},
MRREVIEWER = {Liviu\ I.\ Nicolaescu},
       DOI = {10.12775/TMNA.1997.021},
       URL = {https://doi.org/10.12775/TMNA.1997.021},
}

@article {Long99,
    AUTHOR = {Long, Yiming},
     TITLE = {Bott formula of the {M}aslov-type index theory},
   JOURNAL = {Pacific J. Math.},
  FJOURNAL = {Pacific Journal of Mathematics},
    VOLUME = {187},
      YEAR = {1999},
    NUMBER = {1},
     PAGES = {113--149},
      %ISSN = {0030-8730,1945-5844},
   MRCLASS = {37J45 (34C25 53D12 58E05)},
  MRNUMBER = {1674313},
MRREVIEWER = {James\ F.\ Reineck},
       DOI = {10.2140/pjm.1999.187.113},
       URL = {https://doi.org/10.2140/pjm.1999.187.113},
}

@article {Long00,
    AUTHOR = {Long, Yiming},
     TITLE = {Precise iteration formulae of the {M}aslov-type index theory
              and ellipticity of closed characteristics},
   JOURNAL = {Adv. Math.},
  FJOURNAL = {Advances in Mathematics},
    VOLUME = {154},
      YEAR = {2000},
    NUMBER = {1},
     PAGES = {76--131},
      %ISSN = {0001-8708,1090-2082},
   MRCLASS = {37J45 (34C25 53D12)},
  MRNUMBER = {1780096},
MRREVIEWER = {James\ F.\ Reineck},
       DOI = {10.1006/aima.2000.1914},
       URL = {https://doi.org/10.1006/aima.2000.1914},
}

@article {LZ02,
    AUTHOR = {Long, Yiming and Zhu, Chaofeng},
     TITLE = {Closed characteristics on compact convex hypersurfaces in
              {$\mathbf R^{2n}$}},
   JOURNAL = {Ann. of Math. (2)},
  FJOURNAL = {Annals of Mathematics. Second Series},
    VOLUME = {155},
      YEAR = {2002},
    NUMBER = {2},
     PAGES = {317--368},
      %ISSN = {0003-486X,1939-8980},
   MRCLASS = {37J45 (34C25 53D12)},
  MRNUMBER = {1906590},
MRREVIEWER = {Karl\ Friedrich\ Siburg},
       DOI = {10.2307/3062120},
       URL = {https://doi.org/10.2307/3062120},
}

@article {Mcl16,
    AUTHOR = {McLean, Mark},
     TITLE = {Reeb orbits and the minimal discrepancy of an isolated
              singularity},
   JOURNAL = {Invent. Math.},
  FJOURNAL = {Inventiones Mathematicae},
    VOLUME = {204},
      YEAR = {2016},
    NUMBER = {2},
     PAGES = {505--594},
      %ISSN = {0020-9910,1432-1297},
   MRCLASS = {57R17 (14N35)},
  MRNUMBER = {3489704},
MRREVIEWER = {Yildiray\ Ozan},
       DOI = {10.1007/s00222-015-0620-x},
       URL = {https://doi.org/10.1007/s00222-015-0620-x},
}

@article {Mcl12,
    AUTHOR = {McLean, Mark},
     TITLE = {Local {F}loer homology and infinitely many simple {R}eeb
              orbits},
   JOURNAL = {Algebr. Geom. Topol.},
  FJOURNAL = {Algebraic \& Geometric Topology},
    VOLUME = {12},
      YEAR = {2012},
    NUMBER = {4},
     PAGES = {1901--1923},
      %ISSN = {1472-2747,1472-2739},
   MRCLASS = {53D25 (53C22)},
  MRNUMBER = {2994824},
MRREVIEWER = {Umberto\ Leone\ Hryniewicz},
       DOI = {10.2140/agt.2012.12.1901},
      URL = {https://doi.org/10.2140/agt.2012.12.1901},
}

@article {Rab78,
    AUTHOR = {Rabinowitz, Paul H.},
     TITLE = {Periodic solutions of {H}amiltonian systems},
   JOURNAL = {Comm. Pure Appl. Math.},
  FJOURNAL = {Communications on Pure and Applied Mathematics},
    VOLUME = {31},
      YEAR = {1978},
    NUMBER = {2},
     PAGES = {157--184},
      %ISSN = {0010-3640},
   MRCLASS = {58F05 (34C25)},
  MRNUMBER = {467823},
MRREVIEWER = {A. D. Bruno},
       DOI = {10.1002/cpa.3160310203},
       URL = {https://doi.org/10.1002/cpa.3160310203},
}

@article {SZ92,
    AUTHOR = {Salamon, Dietmar and Zehnder, Eduard},
     TITLE = {Morse theory for periodic solutions of {H}amiltonian systems
              and the {M}aslov index},
   JOURNAL = {Comm. Pure Appl. Math.},
  FJOURNAL = {Communications on Pure and Applied Mathematics},
    VOLUME = {45},
      YEAR = {1992},
    NUMBER = {10},
     PAGES = {1303--1360},
      ISSN = {0010-3640,1097-0312},
   MRCLASS = {58E05 (34C25 58F22 70H05)},
  MRNUMBER = {1181727},
MRREVIEWER = {Yong-Geun\ Oh},
       DOI = {10.1002/cpa.3160451004},
       URL = {https://doi.org/10.1002/cpa.3160451004},
}

@article {Szu88,
    AUTHOR = {Szulkin, Andrzej},
     TITLE = {Morse theory and existence of periodic solutions of convex
              {H}amiltonian systems},
   JOURNAL = {Bull. Soc. Math. France},
  FJOURNAL = {Bulletin de la Soci\'et\'e{} Math\'ematique de France},
    VOLUME = {116},
      YEAR = {1988},
    NUMBER = {2},
     PAGES = {171--197},
      %ISSN = {0037-9484},
   MRCLASS = {58F05 (34C25 58E05 58F22 70H05)},
  MRNUMBER = {971559},
MRREVIEWER = {Michele\ Matzeu},
       URL = {http://www.numdam.org/item?id=BSMF_1988__116_2_171_0},
}

@article {Viterbo90,
    AUTHOR = {Viterbo, Claude},
     TITLE = {A new obstruction to embedding {L}agrangian tori},
   JOURNAL = {Invent. Math.},
  FJOURNAL = {Inventiones Mathematicae},
    VOLUME = {100},
      YEAR = {1990},
    NUMBER = {2},
     PAGES = {301--320},
      %ISSN = {0020-9910,1432-1297},
   MRCLASS = {58F05 (53C23 58E05 58F22)},
  MRNUMBER = {1047136},
MRREVIEWER = {Michel\ Willem},
       DOI = {10.1007/BF01231188},
       URL = {https://doi.org/10.1007/BF01231188},
}

@article {Vit89,
    AUTHOR = {Viterbo, Claude},
     TITLE = {Equivariant {M}orse theory for starshaped {H}amiltonian
              systems},
   JOURNAL = {Trans. Amer. Math. Soc.},
  FJOURNAL = {Transactions of the American Mathematical Society},
    VOLUME = {311},
      YEAR = {1989},
    NUMBER = {2},
     PAGES = {621--655},
      %ISSN = {0002-9947,1088-6850},
   MRCLASS = {58F05 (58E05 58F35)},
  MRNUMBER = {978370},
MRREVIEWER = {Wei\ Yue\ Ding},
       DOI = {10.2307/2001144},
       URL = {https://doi.org/10.2307/2001144},
}

@article {Wang14,
    AUTHOR = {Wang, Wei},
     TITLE = {Irrationally elliptic closed characteristics on compact convex
              hypersurfaces in {$\mathbf{R}^6$}},
   JOURNAL = {J. Funct. Anal.},
  FJOURNAL = {Journal of Functional Analysis},
    VOLUME = {267},
      YEAR = {2014},
    NUMBER = {3},
     PAGES = {799--841},
      %ISSN = {0022-1236,1096-0783},
   MRCLASS = {58E05 (37C75 37J45)},
  MRNUMBER = {3212724},
MRREVIEWER = {Xijun\ Hu},
       DOI = {10.1016/j.jfa.2014.05.014},
       URL = {https://doi.org/10.1016/j.jfa.2014.05.014},
}

@article {Wang22,
    AUTHOR = {Wang, Wei},
     TITLE = {Irrationally elliptic closed characteristics on compact convex
              hypersurfaces in {$\mathbf{R}^{2n}$}},
   JOURNAL = {J. Funct. Anal.},
  FJOURNAL = {Journal of Functional Analysis},
    VOLUME = {282},
      YEAR = {2022},
    NUMBER = {1},
     PAGES = {Paper No. 109269, 29},
      %ISSN = {0022-1236,1096-0783},
   MRCLASS = {58E05 (34C25 37J46)},
  MRNUMBER = {4323511},
MRREVIEWER = {Huagui\ Duan},
       DOI = {10.1016/j.jfa.2021.109269},
       URL = {https://doi.org/10.1016/j.jfa.2021.109269},
}

@article {Wang09,
    AUTHOR = {Wang, Wei},
     TITLE = {Stability of closed characteristics on compact convex
              hypersurfaces in {$\mathbb{R}^6$}},
   JOURNAL = {J. Eur. Math. Soc. (JEMS)},
  FJOURNAL = {Journal of the European Mathematical Society (JEMS)},
    VOLUME = {11},
      YEAR = {2009},
    NUMBER = {3},
     PAGES = {575--596},
      %ISSN = {1435-9855,1435-9863},
   MRCLASS = {37J45 (37J25 58E05)},
  MRNUMBER = {2505442},
MRREVIEWER = {Maria\ Letizia\ Bertotti},
       DOI = {10.4171/JEMS/161},
       URL = {https://doi.org/10.4171/JEMS/161},
}

@article {Wei71,
    AUTHOR = {Weinstein, Alan},
     TITLE = {Symplectic manifolds and their {L}agrangian submanifolds},
   JOURNAL = {Advances in Math.},
  FJOURNAL = {Advances in Mathematics},
    VOLUME = {6},
      YEAR = {1971},
     PAGES = {329--346},
      %ISSN = {0001-8708},
   MRCLASS = {57.50},
  MRNUMBER = {286137},
MRREVIEWER = {D.\ G.\ Ebin},
       DOI = {10.1016/0001-8708(71)90020-X},
       URL = {https://doi.org/10.1016/0001-8708(71)90020-X},
}

@article {WHL07,
    AUTHOR = {Wang, Wei and Hu, Xijun and Long, Yiming},
     TITLE = {Resonance identity, stability, and multiplicity of closed
              characteristics on compact convex hypersurfaces},
   JOURNAL = {Duke Math. J.},
  FJOURNAL = {Duke Mathematical Journal},
    VOLUME = {139},
      YEAR = {2007},
    NUMBER = {3},
     PAGES = {411--462},
      %ISSN = {0012-7094,1547-7398},
   MRCLASS = {37J45 (34C25 58E05)},
  MRNUMBER = {2350849},
MRREVIEWER = {Karl\ Friedrich\ Siburg},
       DOI = {10.1215/S0012-7094-07-13931-0},
       URL = {https://doi.org/10.1215/S0012-7094-07-13931-0},
}

@article {Wang16b,
    AUTHOR = {Wang, Wei},
     TITLE = {Closed characteristics on compact convex hypersurfaces in
              {${\bf R}^8$}},
   JOURNAL = {Adv. Math.},
  FJOURNAL = {Advances in Mathematics},
    VOLUME = {297},
      YEAR = {2016},
     PAGES = {93--148},
      %ISSN = {0001-8708,1090-2082},
   MRCLASS = {53A07 (34C25 37J45 58E05)},
  MRNUMBER = {3498795},
MRREVIEWER = {Thomas\ Bartsch},
       DOI = {10.1016/j.aim.2016.03.044},
       URL = {https://doi.org/10.1016/j.aim.2016.03.044},
}

@article {Wang16a,
    AUTHOR = {Wang, Wei},
     TITLE = {Existence of closed characteristics on compact convex
              hypersurfaces in {$\mathbf{R}^{2n}$}},
   JOURNAL = {Calc. Var. Partial Differential Equations},
  FJOURNAL = {Calculus of Variations and Partial Differential Equations},
    VOLUME = {55},
      YEAR = {2016},
    NUMBER = {1},
     PAGES = {Art. 2, 25},
      %ISSN = {0944-2669,1432-0835},
   MRCLASS = {58E05 (34C25 37J45)},
  MRNUMBER = {3441279},
MRREVIEWER = {Xijun\ Hu},
       DOI = {10.1007/s00526-015-0945-8},
       URL = {https://doi.org/10.1007/s00526-015-0945-8},
}

@article {Wei78,
    AUTHOR = {Weinstein, Alan},
     TITLE = {Periodic orbits for convex {H}amiltonian systems},
   JOURNAL = {Ann. of Math. (2)},
  FJOURNAL = {Annals of Mathematics. Second Series},
    VOLUME = {108},
      YEAR = {1978},
    NUMBER = {3},
     PAGES = {507--518},
      %ISSN = {0003-486X},
   MRCLASS = {58F05},
  MRNUMBER = {512430},
MRREVIEWER = {J.\ Moser},
       DOI = {10.2307/1971185},
       URL = {https://doi.org/10.2307/1971185},
}

@book {AD14,
    AUTHOR = {Audin, Mich\`ele and Damian, Mihai},
     TITLE = {Morse theory and {F}loer homology},
    SERIES = {Universitext},
      NOTE = {Translated from the 2010 French original by Reinie Ern\'e},
 PUBLISHER = {Springer, London; EDP Sciences, Les Ulis},
      YEAR = {2014},
     %PAGES = {xiv+596},
      %ISBN = {978-1-4471-5495-2; 978-1-4471-5496-9; 978-2-7598-0704-8},
   MRCLASS = {53-02 (53D40 58E05)},
  MRNUMBER = {3155456},
MRREVIEWER = {Sonja\ Hohloch},
       DOI = {10.1007/978-1-4471-5496-9},
       URL = {https://doi.org/10.1007/978-1-4471-5496-9},
}

@incollection {Eke86,
    AUTHOR = {Ekeland, Ivar},
     TITLE = {An index theory for periodic solutions of convex {H}amiltonian
              systems},
 BOOKTITLE = {Nonlinear functional analysis and its applications, {P}art 1
              ({B}erkeley, {C}alif., 1983)},
    SERIES = {Proc. Sympos. Pure Math.},
    VOLUME = {45, Part 1},
     PAGES = {395--423},
 PUBLISHER = {Amer. Math. Soc., Providence, RI},
      YEAR = {1986},
      %ISBN = {0-8218-1467-2},
   MRCLASS = {58E05 (58F05)},
  MRNUMBER = {843575},
MRREVIEWER = {Michele\ Matzeu},
       DOI = {10.1090/pspum/045.1/843575},
       URL = {https://doi.org/10.1090/pspum/045.1/843575},
}

@book {Eke90,
    AUTHOR = {Ekeland, Ivar},
     TITLE = {Convexity methods in {H}amiltonian mechanics},
    SERIES = {Ergebnisse der Mathematik und ihrer Grenzgebiete (3) [Results
              in Mathematics and Related Areas (3)]},
    VOLUME = {19},
 PUBLISHER = {Springer-Verlag, Berlin},
      YEAR = {1990},
     %PAGES = {x+247},
      %ISBN = {3-540-50613-6},
   MRCLASS = {58F05 (34C25 58E05 58E40 58F22 70H05)},
  MRNUMBER = {1051888},
MRREVIEWER = {D.\ Pascali},
       DOI = {10.1007/978-3-642-74331-3},
       URL = {https://doi.org/10.1007/978-3-642-74331-3},
}

@book {Long02,
    AUTHOR = {Long, Yiming},
     TITLE = {Index theory for symplectic paths with applications},
    SERIES = {Progress in Mathematics},
    VOLUME = {207},
 PUBLISHER = {Birkh\"auser Verlag, Basel},
      YEAR = {2002},
     %PAGES = {xxiv+380},
      %ISBN = {3-7643-6647-8},
   MRCLASS = {37J45 (34C25 37-02 53D40 58-02 58E05 70H05)},
  MRNUMBER = {1898560},
MRREVIEWER = {Thomas\ Bartsch},
       DOI = {10.1007/978-3-0348-8175-3},
       URL = {https://doi.org/10.1007/978-3-0348-8175-3},
}

@incollection {LZ90,
    AUTHOR = {Long, Yiming and Zehnder, Eduard},
     TITLE = {Morse-theory for forced oscillations of asymptotically linear
              {H}amiltonian systems},
 BOOKTITLE = {Stochastic processes, physics and geometry ({A}scona and
              {L}ocarno, 1988)},
     PAGES = {528--563},
 PUBLISHER = {World Sci. Publ., Teaneck, NJ},
      YEAR = {1990},
   MRCLASS = {58E05 (34C25 58F05 58F22 70H05)},
  MRNUMBER = {1124230},
MRREVIEWER = {Vittorio Coti Zelati},
}

@book {MS17,
    AUTHOR = {McDuff, Dusa and Salamon, Dietmar},
     TITLE = {Introduction to symplectic topology},
    SERIES = {Oxford Graduate Texts in Mathematics},
   EDITION = {Third},
 PUBLISHER = {Oxford University Press, Oxford},
      YEAR = {2017},
    % PAGES = {xi+623},
      %ISBN = {978-0-19-879490-5; 978-0-19-879489-9},
   MRCLASS = {53D35 (53D40 57R17 57R57 57R58)},
  MRNUMBER = {3674984},
MRREVIEWER = {Hansj\"org\ Geiges},
       DOI = {10.1093/oso/9780198794899.001.0001},
       URL = {https://doi.org/10.1093/oso/9780198794899.001.0001},
}

@incollection {Sal99,
    AUTHOR = {Salamon, Dietmar},
     TITLE = {Lectures on {F}loer homology},
 BOOKTITLE = {Symplectic geometry and topology ({P}ark {C}ity, {UT}, 1997)},
    SERIES = {IAS/Park City Math. Ser.},
    VOLUME = {7},
     PAGES = {143--229},
 PUBLISHER = {Amer. Math. Soc., Providence, RI},
      YEAR = {1999},
      %ISBN = {0-8218-0838-9},
   MRCLASS = {53D40 (37J45 53D45 57R17 57R58)},
  MRNUMBER = {1702944},
MRREVIEWER = {David\ E.\ Hurtubise},
       DOI = {10.1016/S0165-2427(99)00127-0},
       URL = {https://doi.org/10.1016/S0165-2427(99)00127-0},
}

@book {Sch93,
    AUTHOR = {Schwarz, Matthias},
     TITLE = {Morse homology},
    SERIES = {Progress in Mathematics},
    VOLUME = {111},
 PUBLISHER = {Birkh\"auser Verlag, Basel},
      YEAR = {1993},
     %PAGES = {x+235},
      %ISBN = {3-7643-2904-1},
   MRCLASS = {58E05 (55N35 57R70)},
  MRNUMBER = {1239174},
MRREVIEWER = {Daniel\ M.\ Burns, Jr.},
       DOI = {10.1007/978-3-0348-8577-5},
       URL = {https://doi.org/10.1007/978-3-0348-8577-5},
}

\vspace{4mm}
{\footnotesize

\address{
\noindent\textsc{Xiaorui Li:}
\href{mailto:lixiaorui@mail.nankai.edu.cn}
{lixiaorui@mail.nankai.edu.cn}\\
School of Mathematics, Shandong University, China}

\vspace{1mm}
\address{
\noindent\textsc{Hui Liu:}
\href{mailto:huiliu00031514@whu.edu.cn.}
{huiliu00031514@whu.edu.cn.}\\
School of Mathematics and Statistics, Wuhan University, China}

\vspace{1mm}
\address{
\noindent\textsc{Wei Wang:}
\href{wangwei@math.pku.edu.cn}
{wangwei@math.pku.edu.cn}\\
Key Laboratory of Pure and Applied Mathematics\\
School of Mathematical Science, Peking University, China}
}

\end{sloppypar}
\end{document}